\newtheorem{theorem}{Theorem}[section]
\newtheorem{proposition}[theorem]{Proposition}
\newtheorem{lemma}[theorem]{Lemma}
\newtheorem{corollary}[theorem]{Corollary}
\newtheorem{proof}{\textmd{\textit{Proof.}}}
\newtheorem{remark}[theorem]{Remark}
\newtheorem{definition}[theorem]{Definition}
\newcommand{\qedd}{\hfill \Box}
\newcommand{\ve}{\varepsilon}
\newcommand{\R}{\ensuremath{\mathbb{R}}}
\newcommand{\E}{\ensuremath{\mathbb{E}}}
\newcommand{\Sph}{\ensuremath{\mathbb{S}}}
\newcommand{\C}{\ensuremath{\mathbb{C}}}
\def\Ric{\mathop{\mathrm{Ric}}\nolimits}
\title{Geodesics on strong Kropina manifolds
	\footnote{
		Mathematics Subject Classification (2010)\,:\,53C60, 53C22.}
	\footnote{
		Keywords: Unit Killing fields, conic metrics, Finsler manifolds, geodesics.}
}
\author{By  Sorin V. Sabau, Kazuhiro Shibuya and Ryozo Yoshikawa }
\date{}
\begin{document}

\maketitle



\begin{abstract}
We study the behavior of the geodesics of strong Kropina spaces. The global and local aspects of geodesics theory are discussed. We illustrate our theory with several examples.
\end{abstract}

\section{Introduction}
\hspace{0.2in}  The study of local and global behavior of geodesics is one of the main topics in  differential geometry.  The characteristic features of the Riemannian manifolds allow a quite detailed study of this topic, and the literature is exhaustive (see  \cite{Ber}, \cite{B}, \cite{Ca}, \cite{Kl} and many other places).

On the other hand, in Finsler geometry, due to the computational difficulties and other peculiarities, the results on the geometry of geodesics are by far less well studied than in the Riemannian setting.
In particular,  the literature concerning the global behavior of long geodesics is limited (see \cite{BCS}, \cite{Sh}).

It is known that a special class of Finsler spaces, namely the Randers spaces, appears as the solution of Zermelo's navigation problem. In particular, when the  breeze can be modeled as a Killing vector field of the underlying Riemannian structure, one can describe in detail the behavior of both short and long geodesics, conjugate locus, cut locus, etc. (see \cite{BCS}, \cite{R}).

Recently we have shown that Kropina spaces, despite of the fact of being conic Finsler structures, can be also obtained as solutions of the Zermelo's navigation problem. See \cite{YS1} for a global theory of Kropina metrics followed by a complete classification of the constant flag curvature Kropina spaces. Indeed, a Kropina space is of constant flag curvature if and only if its Zermelo's navigation data is a unit length Killing vector field $W$ on a constant sectional curvature Riemannian manifold $(M,h)$. Remarkably, unlike the Randers case, only odd dimensional spheres and the Euclidean space admit Kropina metrics of constant flag curvature
(\cite{YS1}).

In the present paper we study the geometry of geodesics of  a Kropina space with the fundamental  metric obtained by the Zermelo's navigation problem with navigation data given by  a unit Killing vector field $W$ on a Riemannian manifold $(M,h)$.
 We call such a space {\it a strong Kropina space}. The Riemannian structure $(M,h)$ does not need to be of constant sectional curvature. We point out that strong Kropina spaces are conic Finsler spaces (see \cite{JS}, \cite{JS2}, \cite{JV} and references within for general theory) and therefore the theory of classical Finsler spaces does not apply.
Indeed, geodesics in a Kropina  space are $F$-admissible curves whose tangent vector is parallel along the curve.
In other words, they are solutions of a certain second order system of differential equations with initial conditions $c(0)=p\in M$ and $\dot{c}(0)\in A_p$, where $A_p\subset T_pM$ is the conic domain of the Kropina metric $F=\alpha^2/\beta$, i.e. the region of $T_pM$ where $F$ is well defined as Finsler norm. We would like to study the geometrical of this special kind of geodesics. We are in special interested in the local and global behavior of these conic geodesics as well as cut locus related properties. 

We point out that since Kropina metrics are special conic metrics, most of their geometrical  properties can be deduced from the general case of conic metrics. Even though this fact makes our paper mainly a review paper, we hope this paper will be a useful one for those learning about other Finsler metrics slightly different from the classical one, in special Kropina type metrics. 

\bigskip

The paper is organized as follows. 

In \S 2 we discuss the general theory of Kropina metrics. In \S 2.1  we recall the construction of Kropina metrics as singular solution of Zermelo's navigation
problem (see \cite{YS1}). We point out that unlike the Randers case, the correspondence between the $(\alpha,\beta)$-data and the navigation data $(h,W)$ is not one-to-one. Since our characterization of globally defined Kropina metrics is done by means of a globally defined unit vector field on the manifold $M$, there are obvious topological restrictions to the existence of such manifolds (see \S 2.2). We give here a complete list of differentiable manifolds that admit non-vanishing vector fields.

In \S 3 we restrict ourselves to a special family of global Kropina manifolds, namely the strong Kropina manifolds, obtained as solutions of Zermelo's navigation problem for the navigation data $(h ,W)$, where $h$ is a Riemannian metric and $W$ a unit Killing field, with respect to $h$, on the base manifold $M$. Clearly, to the topological restrictions on the manifold $M$ presented in \S 2.2, there are curvature restrictions on the Riemannian metric $M$ for globally strong Kropina metrics to exist. The notion of quasi-regular unit Killing fields, that is unit Killing fields all of whose integral curves are closed,  appears here. These Killing fields will play an important role in the following sections. A topological classification of strong Kropina manifolds are obtained in some special cases (see Theorems \ref{thm: topological classification in two dimensions} and \ref{thm: topological classification in the complete case}).

In \S 4 we start by showing that the Kropina geodesics are obtained in a similar manner with the Randers geodesics (\cite{R}), namely by the deviation of a Riemannian geodesic by the flow of $W$ (see \S 4.1).  We define the exponential map of a conic map in \S 4.2 and study its basic properties. Finally, we consider the separation of two points $p$ and $q$ on the base manifold $M$, notion similar to the Finsler distance. Due to the conic character of the metric, obviously  there are differences with the classical case (see \S 4.3). Using the separation function, the geodesic balls in a strong Kropina manifold can be defined, and we show that, even though we are working with a conic metric, these balls still give the topology of the base manifold.  

The minimality of short geodesics and a conic version of Gauss Lemma are presented in \S 5.1, and minimality of long geodesics is discussed in \S 5.2. We explain here that a Hopf-Rinow type theorem for generic Kropina manifolds is not possible.  

One characteristic feature of conic metrics is that they are not geodesically connected, that is even assuming forward completeness of the metric, for arbitrary chosen two points $p$, $q$ the existence 
of a geodesic segment joining these points cannot be proved. Indeed, we present in \S 6.1 the example of the Euclidean space endowed with a strong Kropina metric $F$, forward complete, but having points that cannot be joined by $F$-geodesic segments. However, in the case when the unit Killing field is quasi-regular we prove that $M$ is geodesically connected and hence  a Hopf-Rinow type theorem can be now formulated under this assumption (see \S 6.2). In the following we present the example of strong Kropina metrics constructed on Riemannian homogeneous spaces (\S 6.3).

We study the conjugate and cut points  along the $F$-geodesics in \S 7. 

In the final section \S 8 of the paper we consider some representative examples for the study of the global behavior of $F$-geodesics. Besides the Euclidean case, we study here the geodesics together with their cut locus for sphere (example of positively curved manifold),  cylinder and flat torus (locally Euclidean manifolds). We end the paper with an appendix that contains some local computations needed to determine the geodesic equation and some of their properties.

{\bf Acknowledgements.} We express our gratitude to M. Javaloyes, N. Innami, H. Shimada and K. Okubo for many useful discussions. We indebted to the aonymous referee whose suggestions and insights have improved the exposition of the paper considerably. We are also grateful to U. Somboon for her help with some of the drawings.


\section{Globally defined Kropina metrics}
\subsection{Singular solutions to the Zermelo's navigation problem}\label{sec: Zermelo Navigation}

\hspace{0.2in}  Let $(M,a)$
be an $n$-dimensional Riemannian manifold $M$, let $\alpha(x,y)=|y|_a$ be the $a$-length of the tangent vector $y=y^i\frac{\partial}{\partial x^i}\in T_xM$
 and  $\beta$ 
 be a 1-form on $M$ which we naturally identify with the linear 1-form $\beta=b_i(x)y^i$ on the tangent bundle $TM$. We denote hereafter the coordinates on $M$ by $x=(x^1,\dots,x^n)$ and the natural coordinates on $TM$ by $(x,y)=(x^1,\dots,x^n;y^1,\dots,y^n)$.

We recall that the Finsler metric 
\begin{equation}\label{Kropina metric}
F(x,y):=\frac{\alpha^2(x,y)}{\beta(x,y)} 
\end{equation}
is called a {\it Kropina metric} and it was introduced by V. K. Kropina in 1961 (see \cite{K} and \cite{AIM}). This is not a classical Finsler metric, but a {\it conic} one, that is  $F:A\to \R^+$ is a Finsler metric defined on the conic domain $A\subset TM$ (see \cite{YS1}).  Here $A=\cup_{x\in M}A_x$ and $A_x:=A\cap T_xM$ is given by
\begin{equation}\label{2.2}
A_x=\{y\in T_xM\ : \ \beta(x,y)>0\}.
\end{equation}
Hence, in order to emphasize the domain of  definition $A$ we denote a Kropina space by $(M, F=\alpha^2/\beta : A\to \R^+)$.

We have recently shown \cite{YS1} that Kropina metrics are singular solutions of the  Zermelo's navigation problem. Indeed, in 1931, E. Zermelo studied the following problem:

{\it Suppose a ship sails the sea and a wind comes up. How must the captain steer the ship in order to reach a given destination in the shortest time?
}

The problem was solved by Zermelo himself (\cite{Z}) for the Euclidean flat plane and by Z. Shen \cite{Sh2} in a more general setting. This approach turns out to be extremely fruitful in the study of Randers metrics of constant flag curvature  (\cite{BRS}) and the geodesics' behavior in Randers spaces \cite{R}. Its success lies in the fact that Randers metrics $F=\alpha+\beta$, where $\alpha$ and $\beta$ have the same meanings as above, are solutions of the Zermelo's navigation problem in the case when the ``underlying sea" is a Riemannian manifold $(M,h)$ and the ``wind" is a time independent vector field $W$ on $M$ such that $|W|_h<1$. In this case, the travel-time minimizing paths are exactly the geodesics of an associated Randers metric. We point out that the condition  $|W|_h<1$ is essential for obtaining a positive  definite  Randers metric out of the Zermelo's navigation problem.

More precisely, solving the Zermelo's navigation problem is equivalent to finding a norm $F$ on $T_xM$, or a subset of $T_xM$, such that $F(v)=F(u+W)=1$, where
$u$ the velocity of the ship in the absence of the wind, $|u|_h=1$, and hence $v=u+W$ is the ship velocity under windy conditions.

We have considered in \cite{YS1}
a special case of Zermelo's navigation problem, namely the case when the wind is $h$-unitary, i.e. $|W|_h=1$.
\begin{remark}\label{rem: equiv}
An elementary computation shows that
\[
F(x,y)=1\quad \textrm{ if and only if } \quad |y-W|_h=1,
\]
for any $(x,y)\in TM$ such that $h(y,W)>0$.
\end{remark}

If we start with a Riemannian manifold $(M,h)$ and a vector field $W=W^i\frac{\partial}{\partial x^i}$ on $M$ of $h$-unit length, then the solution of
\begin{equation*}\label{eq for F}
\bigg|\frac{y}{F(x,y)}-W  \bigg|_h=1
\end{equation*}
is a function of the form
\begin{equation*}
F(x,y)=\frac{|y|_h^2}{2h(y,W)}.
\end{equation*}

It can be seen that,
due to the condition $|W|_h=1$ everywhere, that is $W$ is nowhere vanishing on $M$, the norm $F$ is a Kropina metric defined on the conic domain
\begin{equation}\label{conic domain by inequality}
A=\{(x,y)\in TM\ : \ h(y,W)>0\}.
\end{equation}

Indeed, by putting
\begin{equation}\label{a_ij, b_i def}
a(x):=e^{-\kappa(x)}h(x),\qquad \beta:=2e^{-\kappa(x)}W^\flat,
\end{equation}
we obtain the metric
\begin{equation}\label{Kropina metric def}
F(x,y)=\frac{|y|_h^2}{2h(y,W)}=\frac{\alpha^2(x,y)}{\beta(x,y)},
\end{equation}
where $\kappa(x)$ is an arbitrary 
smooth function on $M$, and $W^\flat:=(h_{ij}W^j)dx^i$ is the 1-form on $M$ obtained by the Legendre transformation of $W$ with respect to the Riemannian metric $h$, or, equivalently the linear 1-form $W^\flat=h(y,W)$ on $TM$. One can remark that with these notations we have $b^2(x):=|b(x)|_a^2=4e^{-\kappa(x)}$, where $b(x)=(b_i(x))$.

Conversely, if we start with the Kropina metric \eqref{Kropina metric}, defined on the conic domain \eqref{2.2},  then the Riemannian metric $a$ and the 1-form $\beta$ induce the {\it navigation data} $(h,W)$, made of the Riemannian metric $h$ on $M$ and the $h$-unit vector field $W$ given by
\begin{equation}\label{2.5}
h(x):=e^{\kappa(x)}a(x),\quad \textrm{ and }\quad W:=\frac{1}{2}e^{\kappa(x)}\beta^\sharp,
\end{equation}
respectively,
where the vector field $\beta^\sharp$ is the inverse Legendre transform of the 1-form $\beta$  with respect to the Riemannian metric $a$, and
$\kappa(x)=\log\frac{4}{b^2(x)}$.
Obviously, we obtain the initial Zermelo's navigation problem in terms of $h$ and $W$ whose solution is precisely the Kropina metric \eqref{Kropina metric def}.


\bigskip 

\bigskip


\begin{figure}[h]
	\begin{center}
		\setlength{\unitlength}{1cm}
		\begin{tikzpicture}

		\put(-2,14.25){\vector(1,0){5}}
		\put(0.1,11.75){\vector(0,1){5}}
		\draw (3.5,2.1) circle (1cm);
		\draw (2.5,2.1) circle (1cm);
		\put(0.1,14.25){\vector(1.5,1){1.4}}
		\put(0.1,14.25){\vector(-1,1){0.75}}
		\put(0.1,14.25){\vector(1,0){}}
		\put(2.9,1.9){\begin{tiny}
			$W_{p}$
			\end{tiny}}
		\put(3.6,2.7){\begin{tiny}
			$y$
			\end{tiny}}
		\put(1.7,2.3){\begin{tiny}
			$u$
			\end{tiny}}
		\put(3.2,3.5){\begin{scriptsize}
			$ F(y)=1$
			\end{scriptsize}}
		\put(0.7,3.5){\begin{scriptsize}
			$ \Vert u \Vert_{h}=1$
			\end{scriptsize}}
		\put(2.3,-1){\begin{scriptsize}
			$ (a)$
			\end{scriptsize}}

		\put(4.25,14.25){\vector(1,0){5}}
		\put(6.1,11.75){\vector(0,1){5}}
		\put(9,-0.41){\line(0,1){5}}
		\put(9.1,3.5){\begin{scriptsize}
			$ F(y)=1$
			\end{scriptsize}}
		\put(6.1,14.25){\vector(1.5,1){0.85}}
		\put(6.1,14.25){\vector(0.7,2){0.5}}
		\put(9.1,2.75){\begin{tiny}
			$y$
			\end{tiny}}
		\put(8.6,3.25){\begin{tiny}
			$u$
			\end{tiny}}
		\put(8.4,-1){\begin{scriptsize}
			$ (b)$
			\end{scriptsize}}
		\put(5.5,3){\begin{scriptsize}
			$ T_xM$
			\end{scriptsize}}
			
		\draw (8.5,1.2) arc (-90:90:1);
		
		\end{tikzpicture}
			\bigskip
			\bigskip
			\bigskip
			\bigskip
			\bigskip
		\caption{The Finslerian indicatrix seen from the $h$-Riemanian perspective (a) and Finslerian perspective (b), respectively.}\label{fig: Kropina indicatrix, 2 cases}
	\end{center}
\end{figure}



From geometrical point of view, we observe that the indicatrix of a Kropina metric is obtained by a rigid translation of the $h$-Riemannian unit circle in the direction pointed by $W$. The resulting indicatrix
$S_{A_x}(1):=\{y\in A_x:F(x,y)=1\}$
 is an 
 $h$-Riemannian unit circle passing through the origin of $T_xM$, fact implying the conicity of the Kropina metric, see Figure \ref{fig: Kropina indicatrix, 2 cases} (a), where the norm in $T_xM$ is given by $h$.

If we consider the vector space $T_xM$ with the Finslerian norm $F$, then $S_{A_x}(1)$ looks like a fan in a half open subspace of $T_xM$, see Figure \ref{fig: Kropina indicatrix, 2 cases} (b). Observe that formula (\ref{eq for F}) implies 
$F(u)=\frac{1}{2\cos\theta}$, where 
$u\in A_x$, $||y||_h=1$, and $\theta\in (-\frac{\pi}{2},\frac{\pi}{2})$ is the angle between $u$ and $W_x$. In this case we obtain the polar equation $r=\frac{1}{2\cos\theta}$ that is actually the vertical line $x=\frac{1}{2}$.

\begin{remark}
The navigation data $(h,W)$, that is a Riemannian metric $h$ on $M$ and an $h$-unit vector field
$W$, induces a unique Kropina metric \eqref{Kropina metric def}, and conversely, starting with
a given Kropina metric \eqref{Kropina metric}, the Riemannian metric $a$ and the 1-form $\beta$
uniquely induce the navigation data \eqref{2.5}.

However, unlike the Randers case, this is not a one-to-one correspondence between the
navigation data $(h,W)$ and the $(\alpha,\beta)$-data. Indeed, given
the $(\alpha,\beta)$-data we get a unique navigation data $(h,W)$ by \eqref{2.5}.

Conversely, if starting with the navigation data $(h,W)$, then we obtain a family of corresponding
 $(\alpha,\beta)$-data \eqref{a_ij, b_i def}, depending on a  smooth function $\kappa$ on $M$.
\end{remark}


\subsection{The list of manifolds admitting globally defined Kropina metrics}

 The description given here of Kropina metrics as solutions of the Zermelo's navigation problem is correct only in the case $|W|_h=1$.
  Hence we obtain the following characterization.
 
 \begin{lemma}
 A differentiable manifold $M$ admits a globally defined Kropina metric if and only if it admits a nowhere vanishing vector field $X$.
 \end{lemma}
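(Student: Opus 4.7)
The plan is to set up the two implications using the Zermelo navigation correspondence already established in Section 2.1.

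For the easy direction ($\Leftarrow$), suppose $M$ carries a nowhere vanishing vector field $X$. I would first pick any auxiliary Riemannian metric $h$ on $M$ (which always exists by a standard partition of unity argument, independent of any topological hypotheses). Then $W:=X/|X|_h$ is well-defined on all of $M$ and satisfies $|W|_h=1$ everywhere. Applying the construction of the Zermelo navigation solution for the data $(h,W)$ recalled above, I would obtain the Kropina metric
\[
F(x,y)=\frac{|y|_h^2}{2h(y,W)}
\]
defined on the conic domain $A=\{(x,y)\in TM : h(y,W)>0\}$, which is non-empty at every $x\in M$ since $W_x\in A_x$. This $F$ is a globally defined Kropina metric in the sense of \eqref{Kropina metric def}.

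For the converse direction ($\Rightarrow$), suppose $(M,F=\alpha^2/\beta)$ is a globally defined Kropina space. The first step is the observation that global definability forces the 1-form $\beta$ to be nowhere zero on $M$: if $\beta_p=0$ at some point $p$, then by \eqref{2.2} the conic fiber $A_p$ would be empty, contradicting the fact that $F$ is a conic Finsler metric on $TM$. Given that $\beta$ is nowhere vanishing, the quantity $b^2(x)=|\beta(x)|_a^2$ is strictly positive on $M$, so $\kappa(x)=\log(4/b^2(x))$ is a smooth function on $M$, and then $W:=\tfrac12 e^{\kappa(x)}\beta^\sharp$ in \eqref{2.5} is a smooth, nowhere vanishing vector field on $M$ (in fact $h$-unit for the associated metric $h=e^{\kappa}a$). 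This produces the required nowhere vanishing vector field $X:=W$.

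There is no real obstacle; the only point requiring care is the observation that global definability of a conic metric of Kropina type forces $\beta$, equivalently $W$, to be nowhere zero, since the conic domain $A_x$ would otherwise collapse. Once this is noted, both directions are immediate applications of the two constructions \eqref{Kropina metric def} and \eqref{2.5} that convert navigation data into $(\alpha,\beta)$-data and back. \qedd
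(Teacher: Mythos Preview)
Your proof is correct and follows essentially the same approach as the paper. The paper only spells out the ($\Leftarrow$) direction explicitly---normalizing $X$ by an auxiliary Riemannian metric $h$ to obtain the navigation data $(h,W)$---while you have additionally made the ($\Rightarrow$) direction explicit by observing that $\beta$ must be nowhere zero (lest $A_p$ be empty) and then invoking the inverse construction \eqref{2.5}; this is implicit in the paper's surrounding discussion but not written out there.
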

 
 Indeed, by normalization using a Riemannian metric $h$, we obtain the $h$-unit wind $W:=\frac{1}{\sqrt{h(X,X)}}X$ on $M$, and hence the navigation data $(h,W)$ that uniquely induces a Kropina metric.
 
  This requirement imposes immediately topological restriction on the manifolds admitting well defined Kropina metrics and the complete characterization of such manifold is studied in differential topology.

General notions in differential topology
allow us to conclude the following.
It is true that these are classical facts in topology, but since it is hard to find a self contained monograph containing all these details, we decided to write a systematic description here. 

We obtain the following list of manifolds admitting globally defined Kropina metrics.
\begin{theorem}\label{Globally Kropina}

\begin{enumerate}
\item Any compact connected manifold with boundary admits a globally defined Kropina metric.
\item Any compact connected boundaryless manifold $M$ admits a globally defined Kropina metric if and only if $\chi(M)=0$.
\item Any compact connected odd dimensional manifold (regardless it has boundary or not) admits a globally defined Kropina metric.
\item Any connected non-compact manifold admits a globally defined Kropina metric.
\item If $M$ admits a globally defined Kropina metric,
then the product manifold $M\times N$ also admits a globally defined Kropina metric.
\item Every Lie group $G$ admits $n=\dim G$ distinct globally defined Kropina metrics, one of each corresponding to one vector field in a parallelization of $G$.
\item If $M$ and $N$ are parallelizable, then $M\times N$ admits $m\times n$ distinct globally defined Kropina metrics, where $m$ and $n$ are the dimensions of $M$ and $N$, respectively.

\end{enumerate}

\end{theorem}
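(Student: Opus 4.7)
The strategy is to apply the lemma preceding the theorem, which reduces the existence of a globally defined Kropina metric to the existence of a nowhere-vanishing vector field on $M$. Granted this reduction, each of the seven items becomes a classical fact of differential topology, so the proof amounts to a coordinated invocation of those facts rather than a new computation.

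For item (2) I would invoke the Poincaré–Hopf theorem: a closed manifold admits a nowhere-vanishing vector field if and only if $\chi(M)=0$. Item (3) follows because Poincaré duality forces $\chi(M)=0$ on any closed odd-dimensional manifold, thereby reducing to (2); if the manifold has nonempty boundary, one instead appeals to (1). For (5), the product field $(X,0)$ is nowhere-vanishing on $M\times N$ whenever $X$ is on $M$. For (6), every Lie group is parallelizable by left-translation of a basis of its Lie algebra, furnishing $n=\dim G$ independent nowhere-vanishing vector fields, each of which yields its own Kropina metric via normalization against an auxiliary Riemannian metric; (7) is then obtained by combining the parallelizations of the two factors.

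The genuinely topological cases are (1) and (4), and for both I would proceed via obstruction theory: a nowhere-vanishing vector field is a section of the unit sphere bundle $S(TM)\to M$, and for an $n$-manifold the only potentially nonzero obstruction lies in $H^n(M;\mathbb{Z}/2)$. A compact manifold with nonempty boundary admits a Morse function whose minimum level set is $\partial M$ and whose interior critical points all have index strictly less than $n$, so $M$ has the homotopy type of a CW-complex of dimension at most $n-1$, whence $H^n(M)=0$. A connected non-compact manifold likewise carries no top-dimensional cells, and the same vanishing applies. The main obstacle is item (4): a partition-of-unity construction from locally nonvanishing fields does not suffice because of cancellations upon summation, and one really needs the obstruction-theoretic (or equivalent handle-cancellation) input that an open connected $n$-manifold has the homotopy type of a CW-complex of dimension at most $n-1$. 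Everything else is essentially elementary once Poincaré–Hopf is granted.
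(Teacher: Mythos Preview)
Your proposal is correct and follows the same strategy as the paper: reduce via the preceding lemma to the existence of a nowhere-vanishing vector field, then cite the relevant classical facts from differential topology. The paper itself offers no proof beyond declaring these to be ``classical facts in topology,'' so your sketch is in fact more detailed than what appears there. One small slip worth correcting: in your obstruction-theory argument for (1) and (4), the primary obstruction to sectioning the unit sphere bundle of an $n$-manifold lies in $H^n(M;\pi_{n-1}(S^{n-1}))$, and for $n\geq 2$ this coefficient group is $\mathbb{Z}$ (twisted in the non-orientable case), not $\mathbb{Z}/2$; the conclusion is unaffected since the relevant $H^n$ vanishes with any coefficients.
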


\begin{corollary}

\begin{enumerate}
\item For any manifold $M$, the topological cylinder $\R\times M$ admits a globally defined Kropina metric.
\item Any odd dimensional sphere admits a globally defined Kropina metric.
\item Any compact orientable 3-manifold admits a globally defined Kropina metric.
\end{enumerate}

\end{corollary}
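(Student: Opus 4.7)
The plan is to derive each item directly from Theorem \ref{Globally Kropina}, treating the corollary as a set of immediate specializations rather than a new geometric argument. No fundamentally new construction is needed; the only content is matching each claim to the correct item(s) in the theorem, so the work is to record the pairings carefully and verify the hypotheses.

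For part (1), I would note that $\R$ is a non-compact connected $1$-manifold (alternatively, a $1$-dimensional Lie group), so by item (4) of Theorem \ref{Globally Kropina} (or by item (6)) it admits a globally defined Kropina metric. Since $\R$ carries such a metric, item (5) applied with ``$M$'' replaced by $\R$ and ``$N$'' by the given manifold yields a globally defined Kropina metric on the cylinder $\R\times M$, with no hypothesis needed on $M$ itself.

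For part (2), an odd-dimensional sphere $\Sph^{2k+1}$ is a compact connected boundaryless manifold of odd dimension, hence is covered by item (3) of Theorem \ref{Globally Kropina}; equivalently, one can invoke item (2) together with the well-known fact $\chi(\Sph^{2k+1})=0$. Either route produces the required globally defined Kropina metric.

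For part (3), I would apply item (3) of Theorem \ref{Globally Kropina} component by component: any compact orientable $3$-manifold is a finite disjoint union of compact connected $3$-manifolds, each of which has odd dimension, and so each component admits a nowhere vanishing vector field and consequently a globally defined Kropina metric; assembling these componentwise gives a globally defined Kropina metric on the original manifold. (Orientability is not used directly in this argument, but it is consistent with the classical Stiefel result that compact orientable $3$-manifolds are parallelizable, which would also suffice via the same characterization.) The only potential obstacle is the handling of multiple components, which is resolved by the trivial observation that the nowhere vanishing vector field condition is purely local-to-components.
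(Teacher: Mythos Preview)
Your proposal is correct and matches the paper's intent: the corollary is stated immediately after Theorem \ref{Globally Kropina} with no separate proof, so it is meant to be read exactly as you have read it, namely as a direct specialization of the appropriate items of that theorem. Your pairings (item (4) or (6) together with item (5) for the cylinder; item (3) or equivalently item (2) with $\chi(\Sph^{2k+1})=0$ for odd spheres; item (3) applied componentwise for compact $3$-manifolds) are precisely what the authors have in mind.
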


\begin{remark}
Observe that the existence of a globally defined Kropina metric
is equivalent to the existence of a Lorentz type metric on $M$ since both conditions are actually equivalent to the existence of a nowhere vanishing vector field (see for instance \cite{ON}, Proposition 37 in chapter 5). 

We also recall that this condition reads that $M$ is 
\begin{enumerate}
	\item a compact manifold with vanishing Euler characteristic, or
	\item a non-compact manifold,
\end{enumerate}
(see \cite{V} and \cite{Kk}, respectively). 
\end{remark}


\section{Strong Kropina metrics}

\quad We will assume in the following that $(M,h)$ is a complete Riemannian manifold. Even though we don't need completeness of $h$ for all our results, we can assume completeness without loosing the generality. 

We recall that a smooth vector field $W$ on $(M,h)$ is called {\it Killing} if $\mathcal L_Wh=0$, where $\mathcal L$ is the Lie derivative, which is equivalent to
\[
W(h(Y,Z))=h([W,Y],Z)+h(Y,[W,Z])
\]
for all smooth vector fields $Y$, $Z$ on $M$.
 Observe that this relation is equivalent to the coordinates {\it Killing equations} (for the covariant components $W_i=h_{ij}W^j$)
\[
W_{i|j}+W_{j|i}=0,
\]
where $W=W^i\frac{\partial }{\partial x^i}$, and $\ _{|i}$ is the covariant derivative of $h$.

Equivalently, a smooth vector field $W$ on a complete Riemannian manifold $(M,h)$ is Killing if and only if the flow it generates (i.e. the local one-parameter group of local diffeomorphisms) $\Phi$ is complete and the transformations $\Phi_t$, $t\in \R$, are Riemannian isometries of $h$.


A special class of Kropina metrics are the strong Kropina metrics defined as follows.

\begin{definition}
The Kropina metric obtained from the navigation data $(h,W)$, where $W$ is an $h$-unit Killing vector field of $(M,h)$, is called a {\it strong Kropina metric}.
\end{definition}

Obviously, we have

\begin{lemma}
A manifold $M$ admits a strong Kropina metric if and only if on $M$ there exists a Riemannian metric $h$ with a unit Killing vector field W.
\end{lemma}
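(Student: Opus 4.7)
The statement is essentially a direct consequence of the definition of a strong Kropina metric together with the Zermelo navigation construction recalled in Section 2.1, so the plan is simply to unpack both implications using the material already developed.

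For the ``only if'' direction, I would start from a strong Kropina metric $F$ on $M$. By the preceding definition, $F$ is the Kropina metric obtained from some navigation data $(h,W)$ in which $W$ is an $h$-unit Killing vector field on $(M,h)$; this immediately produces the required Riemannian metric and unit Killing field.

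For the ``if'' direction, given a Riemannian metric $h$ on $M$ and a Killing vector field $W$ with $|W|_h = 1$, I would invoke the construction of Section 2.1: since $W$ is $h$-unitary and in particular nowhere vanishing, the Zermelo navigation problem with data $(h,W)$ has as solution the conic Finsler metric
\[
F(x,y) = \frac{|y|_h^2}{2 h(y,W)},
\]
defined on the conic domain $A = \{(x,y) \in TM : h(y,W) > 0\}$, which is a Kropina metric in the sense of \eqref{Kropina metric def}. Because the input vector field $W$ is a unit Killing field, this Kropina metric satisfies the definition of a strong Kropina metric, completing the equivalence.

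There is essentially no obstacle: the lemma is a tautological rephrasing of the definition, and the only nontrivial ingredient, namely that unit navigation data always yields a bona fide Kropina metric on the prescribed conic domain, has already been established in Section 2.1. I would therefore keep the proof to just a few lines, pointing explicitly to the definition of strong Kropina metric and to the Zermelo construction \eqref{Kropina metric def}--\eqref{conic domain by inequality} rather than repeating any computation.
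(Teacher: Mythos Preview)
Your proposal is correct and matches the paper's treatment: the paper introduces this lemma with ``Obviously, we have'' and gives no proof at all, treating it as an immediate consequence of the definition of strong Kropina metric together with the Zermelo navigation construction of Section~2.1. Your write-up simply makes explicit what the paper leaves implicit, and there is nothing to add or correct.
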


We recall a fundamental result about unit length Killing vector fields on a Riemannian manifold.

\begin{lemma}{\rm (\cite{BN1})}\label{integral lines geodesics}
Let $W$ be a Killing vector field on a Riemannian manifold $(M,h)$. Then the followings are equivalent:
\begin{enumerate}
 \item[{(i)}] $W$ has constant length on $(M,h)$.
 \item[{(ii)}] $W$ is auto-parallel on $(M,h)$, that is $\nabla^{(h)}_WW=0$, where $\nabla^{(h)}$ is the Levi-Civita connection of $h$.
  \item[{(iii)}] Every integral curve of the field $W$ is a geodesic of $(M,h)$.
  \end{enumerate}
\end{lemma}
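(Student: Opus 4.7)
The plan is to use the Killing equation to relate the two quantities appearing in conditions (i) and (ii), and then observe that (ii) and (iii) are essentially tautological once the definition of a geodesic is recalled.

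First, I would rewrite the Killing equation in the invariant form
\[
h(\nabla^{(h)}_X W, Y) + h(\nabla^{(h)}_Y W, X) = 0 \qquad \text{for all } X, Y,
\]
which expresses the skew-symmetry of the $(1,1)$-tensor $X\mapsto \nabla^{(h)}_X W$ with respect to $h$. The crucial consequence of this for our purposes is obtained by specializing $Y=W$: for any vector field $X$,
\[
h(\nabla^{(h)}_X W, W) = -\,h(\nabla^{(h)}_W W, X).
\]

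Next, I would differentiate the squared length of $W$. By compatibility of $\nabla^{(h)}$ with $h$, for any vector field $X$,
\[
X\!\left(|W|_h^{\,2}\right) \;=\; 2\,h(\nabla^{(h)}_X W, W) \;=\; -\,2\,h(\nabla^{(h)}_W W, X),
\]
where the second equality uses the skew-symmetry above. This single identity drives the equivalence of (i) and (ii): if $|W|_h$ is constant then the left-hand side vanishes for every $X$, so $h(\nabla^{(h)}_W W, X)=0$ for all $X$, hence $\nabla^{(h)}_W W = 0$; conversely, if $\nabla^{(h)}_W W = 0$ the right-hand side vanishes for every $X$, so $|W|_h^{\,2}$ has zero derivative in every direction and is therefore locally, hence globally (on each connected component; we may assume $M$ connected), constant.

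Finally, for (ii) $\Leftrightarrow$ (iii), I would recall that an integral curve $\gamma$ of $W$ satisfies $\dot\gamma(t)=W_{\gamma(t)}$, so the geodesic equation $\nabla^{(h)}_{\dot\gamma}\dot\gamma = 0$ reads exactly $(\nabla^{(h)}_W W)_{\gamma(t)}=0$. Thus (ii) is equivalent to the vanishing of $\nabla^{(h)}_W W$ along every integral curve, which in turn is equivalent to (iii) because through every point of $M$ there passes an integral curve of $W$ (so the pointwise condition on $M$ and the condition along all integral curves coincide). The only real point of care in the whole argument is getting the sign right in the application of the Killing identity; beyond that, no obstacle is expected, since the result ultimately reduces to one line of tensor calculus plus the definition of a geodesic.
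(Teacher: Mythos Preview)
Your proof is correct and follows essentially the same approach as the paper. The paper compresses the argument into the single identity $(\mathcal L_Wh)(W,Y)=h(\nabla^{(h)}_WW,Y)+\tfrac{1}{2}Y(h(W,W))$, which for Killing $W$ yields precisely your relation $h(\nabla^{(h)}_WW,X)=-\tfrac12 X(|W|_h^2)$; the only difference is cosmetic (Lie derivative formulation versus skew-symmetry of $\nabla^{(h)}W$), and you additionally spell out the equivalence (ii)$\Leftrightarrow$(iii), which the paper leaves implicit.
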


Indeed, the main idea behind this result is the following elementary computation
\begin{equation*} 
(\mathcal L_Wh)(W,Y)=h(\nabla^{(h)}_WW,Y)+\frac{1}{2}Y(h(W,W))
\end{equation*}
for any vector field $Y$,  proving that (i) is equivalent to (ii).

Clearly, there are geometrical and topological obstructions on the manifold $M$ for the existence of strong Kropina metrics. A necessary condition is that the manifold $M$ is on the list of differentiable manifolds admitting globally defined Kropina metrics in Theorem \ref{Globally Kropina}.
Sufficient conditions must involve conditions on the Riemannian metric $h$.

Here are some manifolds that do not admit strong Kropina metrics.

\begin{theorem}\label{not possible strong Kropina}
	If $(M,h)$ is one of the followings
\begin{enumerate}
\item   an even-dimensional compact Riemannian of positive sectional curvature, or
\item   a Riemannian manifold with negative Ricci curvature,
\end{enumerate}
then $M$ does not admit strong Kropina metrics.
\end{theorem}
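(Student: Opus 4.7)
By the lemma immediately preceding the theorem, $M$ admits a strong Kropina metric if and only if some Riemannian metric $h$ on $M$ carries a unit-length Killing vector field $W$. The plan is therefore to rule out the existence of such a $W$ in each of the two cases.

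Case (2) falls to the pointwise Bochner identity for Killing fields,
\[
\tfrac{1}{2}\Delta\bigl(|W|_h^{\,2}\bigr) = \bigl|\nabla^{(h)}W\bigr|_h^{\,2} - \Ric(W,W).
\]
With $|W|_h \equiv 1$ the left-hand side vanishes, so $|\nabla^{(h)}W|_h^{\,2} = \Ric(W,W)$ pointwise. The assumption $\Ric<0$ together with $W\neq 0$ makes the right-hand side strictly negative while the left is non-negative---contradiction. No use is made of completeness or compactness.

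Case (1) will be handled by a Berger-type Hessian argument. Set $A := \nabla^{(h)}W$ and $f := |W|_h^{\,2}$; since $W$ is Killing, $A_p$ is pointwise $h$-skew-symmetric on $T_pM$, and since $f\equiv 1$ both $AW\equiv 0$ and $\mathrm{Hess}\,f\equiv 0$ follow immediately. Fix any $p\in M$. The restriction of $A_p$ to the orthogonal complement $W_p^\perp$ is a skew-symmetric endomorphism of an odd-dimensional (dimension $2k-1$) space, hence has non-trivial kernel; pick a unit vector $V\in W_p^\perp$ with $A_pV=0$. Applying the standard Killing identity $\nabla^2_{X,Y}W = R(X,W)Y$ together with $A_pV=0$, one gets
\[
0 = \tfrac{1}{2}\mathrm{Hess}_p f(V,V) = h\bigl(R(V,W_p)V,\,W_p\bigr) = -K(V,W_p)\,|W_p|_h^{\,2},
\]
forcing $K(V,W_p)=0$ and contradicting positive sectional curvature.

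The main technical obstacle is tracking the curvature sign conventions and the exact form of the Killing identity $\nabla^2_{X,Y}W = R(X,W)Y$ used in case (1); everything else is a short computation. A cleaner alternative for case (1), if one prefers to cite rather than recompute, is Berger's classical theorem---every Killing field on an even-dimensional compact Riemannian manifold of positive sectional curvature has a zero---which is plainly incompatible with $|W|_h \equiv 1$; in that route the compactness hypothesis of (1) is what makes the clean citation available.
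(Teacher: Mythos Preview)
Your argument is correct. The paper's own ``proof'' is a single sentence citing Theorem~5 and Corollary~2 of \cite{BN1} for the fact that such Riemannian manifolds admit no nowhere-vanishing Killing field; you instead supply self-contained arguments for both cases.

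A couple of remarks on the comparison. For case~(2), your Bochner-type computation is exactly the content of the cited result in \cite{BN1} (and is essentially what the paper later records as Theorem~\ref{thm: 1st properties}(i)). For case~(1), your pointwise Hessian/Kostant argument is actually \emph{stronger} than what the paper states: it shows that any even-dimensional Riemannian manifold of everywhere positive sectional curvature---compact or not---admits no unit Killing field, so compactness is superfluous. You correctly flag this and offer Berger's theorem as the alternative that matches the paper's hypotheses. One small quibble: your opening sentence invokes the lemma ``$M$ admits a strong Kropina metric iff \emph{some} Riemannian metric $h$ on $M$ carries a unit Killing field,'' but the theorem concerns a \emph{fixed} $(M,h)$, and your argument (correctly) rules out unit Killing fields for that specific $h$; the phrasing of the reduction should reflect this.
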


Indeed, it is known that  such Riemannian manifolds do not admit nowhere vanishing Killing vector fields (see \cite{BN1}, Theorem 5 and Corollary 2).

On the other hand, there are a lot of Riemannian manifolds admitting strong Kropina metrics.
One clear candidate to be a unit Killing vector field is a parallel vector field on the Riemannian manifold $(M,h)$. Indeed, we have
\begin{theorem}
Let $W$ be a non-trivial parallel vector field on the Riemannian manifold $(M,h)$. Then 
\begin{enumerate}
 \item[{(i)}] The manifold $M$ admits a strong Kropina metric $F$ induced by the navigation data $(h, W)$.
  \item[{(ii)}]  $M$ is a local direct metric product of some 1-dimensional manifold, tangent to the field $W$, and its orthogonal complement. Thus, the universal covering $\widetilde M$ of the Riemannian manifold $(M,h)$ is a direct metric product $\widetilde M=N\times \R$, where $N$ is an $(n-1)$-dimensional submanifold and the field $\widetilde W$ (that projects to $W$ under the natural projection $\widetilde M\to M$) is tangent to the $\R$ direction.
  \end{enumerate}
\end{theorem}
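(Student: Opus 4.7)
The plan is to verify (i) by the elementary facts that a non-trivial parallel vector field has constant norm and is automatically Killing, while (ii) reduces to the classical de~Rham decomposition theorem applied to the parallel line distribution spanned by $W$.

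For (i), I would first observe that for every vector field $X$,
\[
X(h(W,W)) = 2h(\nabla^{(h)}_X W, W) = 0,
\]
so $|W|_h$ is a positive constant, the field being non-trivial. After rescaling $h$ by a constant (or equivalently replacing $W$ by $W/|W|_h$) we may assume $|W|_h = 1$. Next, the Killing equation
\[
h(\nabla^{(h)}_X W, Y) + h(\nabla^{(h)}_Y W, X) = 0
\]
is satisfied term by term, since $\nabla^{(h)} W = 0$. Thus $W$ is a unit Killing vector field, and the definition of strong Kropina metric produces $F$ from the navigation data $(h, W)$.

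For (ii), the key observation is that both the line distribution $\mathcal{D}_1 := \mathrm{span}(W)$ and its orthogonal complement $\mathcal{D}_2 := W^\perp$ are parallel for $\nabla^{(h)}$: that $\mathcal{D}_1$ is parallel follows at once from $\nabla^{(h)} W = 0$, and since the Levi-Civita connection is metric, the orthogonal complement of a parallel distribution is parallel. A parallel distribution is in particular involutive with totally geodesic leaves, so by Frobenius $TM$ splits locally as $\mathcal{D}_1 \oplus \mathcal{D}_2$ into two integrable, mutually orthogonal, parallel distributions. This is precisely the hypothesis of the local de~Rham theorem, which yields a local isometric product decomposition of $(M,h)$ into a 1-dimensional factor tangent to $W$ and an $(n-1)$-dimensional factor orthogonal to it. Since $(M,h)$ is assumed complete, the universal cover $\widetilde{M}$ is also complete and simply connected, so the global version of de~Rham's theorem upgrades this to a genuine metric product $\widetilde{M} = N \times \R$. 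The lifted field $\widetilde{W}$, being a parallel unit vector field tangent to the parallel line distribution, must agree (up to orientation of the factor) with the coordinate field on the $\R$ factor.

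The main obstacle is not a computation but the correct invocation of the de~Rham splitting: one must verify that $W^\perp$ is truly parallel (not merely involutive), and that the 1-dimensional leaves are complete so that the global form of de~Rham's theorem applies on $\widetilde{M}$. Both points are standard consequences of $\nabla^{(h)} W = 0$ combined with completeness of $h$, but they are the only places where anything beyond direct calculation is needed.
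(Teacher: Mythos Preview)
Your proof is correct and follows the same route as the paper: part (i) by observing that a parallel field has constant norm and trivially satisfies the Killing equation, part (ii) by invoking the de~Rham decomposition theorem. The paper's argument is extremely terse (two sentences, citing \cite{BN1} for de~Rham), so your version simply supplies the details the paper omits.
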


Indeed, it is easy to see that if $W$ is parallel then  we can make it a unit  Killing vector field on $(M,h)$.
The second part follows from the well-known de Rham decomposition Theorem (\cite{BN1}).

\begin{remark}
We point out that to be a unit Killing vector field is not equivalent to being parallel. The simplest example are the odd dimensional spheres that have unit Killing vector fields, but no parallel ones.
\end{remark}

    \begin{theorem}\label{thm: 1st properties}
    Let $M$ be a differential manifold admitting a strong Kropina metric $F$ induced by the navigation data $(h,W)$. Then
    \begin{enumerate}
    \item[{(i)}] $\Ric(W,W)\geq 0$, where $\Ric$ is the Ricci curvature of $(M,h)$.
   \item[{(ii)}] The equality $\Ric(W,W)=0$ is equivalent to $W$ being parallel on $(M,F)$.
    \end{enumerate}
    \end{theorem}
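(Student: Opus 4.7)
The plan is to deduce both assertions from a single Bochner-type divergence identity valid for an arbitrary vector field $X$ on $(M,h)$, namely
$$\text{div}\bigl(\nabla^{(h)}_X X - (\text{div}\, X)\, X\bigr) = \Ric(X,X) + \text{tr}\bigl((\nabla^{(h)} X)^2\bigr) - (\text{div}\, X)^2,$$
and to specialize it to $X = W$, the unit Killing field furnished by the strong Kropina navigation data.

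I would first collect three simplifying facts that follow from $W$ being a unit Killing field of $h$: (a) the endomorphism $A := \nabla^{(h)} W$ of $TM$ is pointwise $h$-skew-symmetric, by the Killing equation $W_{i|j}+W_{j|i}=0$; (b) consequently $\text{div}\, W = \text{tr}\, A = 0$, and $\text{tr}(A^2) = -\|A\|^2 = -|\nabla^{(h)} W|^2$, where the norm is the pointwise Hilbert--Schmidt norm with respect to $h$; (c) since $|W|_h \equiv 1$, Lemma \ref{integral lines geodesics} gives $\nabla^{(h)}_W W = 0$. Substituting (a)--(c) into the displayed identity makes the left-hand side vanish, and what remains is
$$0 = \Ric(W,W) - |\nabla^{(h)} W|^2,$$
that is, $\Ric(W,W) = |\nabla^{(h)} W|^2 \geq 0$. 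This proves (i) and simultaneously yields the precise pointwise relation that drives (ii).

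Assertion (ii) is then almost immediate: the equation $\Ric(W,W) = |\nabla^{(h)} W|^2$ shows that $\Ric(W,W) \equiv 0$ is equivalent to $\nabla^{(h)} W \equiv 0$, which is exactly the statement that $W$ is parallel on $(M,h)$. (I read the ``parallel on $(M,F)$'' of the conclusion as parallelism with respect to the Levi-Civita connection of the underlying Riemannian metric $h$, in the same sense as in the preceding theorem dealing with parallel unit Killing fields.) The reverse implication is automatic from the identity.

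The only potentially delicate point is justifying the Bochner-type divergence identity itself, but this is a standard pointwise computation obtained by applying the Ricci identity to $\nabla^{(h)} X$ and taking a trace; I would either include a short local-coordinate derivation in a remark or simply cite it as a classical fact from Riemannian geometry. No further input is required, so the argument is essentially algebraic once the three bullets (a)--(c) are in hand.
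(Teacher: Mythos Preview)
Your argument is correct. The divergence identity you quote is exactly the classical Yano--Bochner formula, and the three specializations (a)--(c) are precisely what make both sides collapse to $\Ric(W,W)=|\nabla^{(h)}W|^{2}$; parts (i) and (ii) then follow immediately, and your reading of ``parallel on $(M,F)$'' as $h$-parallel is the intended one.

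The paper, however, does not prove this theorem at all: it simply writes ``This follows from \cite{BN1}, Theorems 3 and 4'' and moves on. So there is nothing to compare at the level of the paper itself. What you have supplied is a self-contained replacement for that citation, and it is in fact the standard argument (going back to Bochner--Yano, cf.\ \cite{YB}) that underlies the Berestovskii--Nikonorov results being invoked. The payoff of writing it out, as you do, is that the reader sees the pointwise identity $\Ric(W,W)=|\nabla^{(h)}W|^{2}$ explicitly, which makes the equivalence in (ii) transparent rather than a black-box consequence of an external theorem.
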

    
    This follows from \cite{BN1}, Theorems 3  and 4.

Moreover, we have

\begin{theorem}
If $(M,h)$ is a Riemannian manifold of non-positive sectional curvature and $W$ is a Killing vector field on $(M,h)$, then the followings are equivalent
\begin{enumerate}
 \item[{(i)}] $M$ has a strong Kropina metric induced by the navigation data $(h,W)$.
   \item[{(ii)}] $W$ is parallel on $(M,h)$.
\end{enumerate}
\end{theorem}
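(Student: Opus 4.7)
My plan is to prove the two implications separately, with (ii) $\Rightarrow$ (i) as an immediate corollary of the preceding theorem on parallel vector fields, and all the substantive content lying in (i) $\Rightarrow$ (ii). For the easy direction, a non-trivial parallel vector field has constant length and is automatically Killing (since $\nabla W = 0$ gives $\mathcal{L}_W h = 0$), so after unit normalization it serves as navigation data for a strong Kropina metric, as already established. I will devote the rest of the argument to the forward implication.

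For (i) $\Rightarrow$ (ii) my approach relies on two classical facts about Killing fields. First, if $W$ is Killing and $\gamma$ is any $h$-geodesic, then $J(t) := W(\gamma(t))$ is a Jacobi field along $\gamma$; the reason is that the flow $\Phi_s$ of $W$ consists of $h$-isometries, so the variation $(s,t) \mapsto \Phi_s(\gamma(t))$ is a variation through geodesics, realizing $W\circ\gamma$ as a Jacobi field. Second, every Jacobi field obeys the standard identity
\[
\tfrac{1}{2}\frac{d^2}{dt^2}|J|^2 \;=\; |\nabla_{\dot\gamma} J|^2 \;-\; \langle R(J,\dot\gamma)\dot\gamma,\, J\rangle.
\]
The hypothesis of non-positive sectional curvature makes the curvature term non-positive, so $t \mapsto |J(t)|^2 = |W(\gamma(t))|^2$ is a convex function along every geodesic.

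Assuming (i), that is $|W|_h \equiv 1$, this convex function is identically equal to $1$ and therefore has vanishing second derivative. In the identity above both $|\nabla_{\dot\gamma} W|^2$ and $-\langle R(W,\dot\gamma)\dot\gamma, W\rangle$ are non-negative and sum to zero, so each vanishes along every geodesic; in particular $\nabla_{\dot\gamma} W \equiv 0$ on every $h$-geodesic. Since any $v \in T_pM$ is the initial velocity of some geodesic, this gives $\nabla_v W = 0$ for every $v$, proving that $W$ is parallel. I do not expect a serious obstacle here: the Jacobi field identification and the convexity identity are textbook material, and the mechanism is simply that a constant convex function has zero second derivative. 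The non-positive curvature hypothesis enters precisely at the convexity step, and dropping it allows counterexamples such as the Hopf fields on odd-dimensional spheres mentioned earlier in the paper, where $W$ is unit Killing but not parallel.
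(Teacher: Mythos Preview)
Your argument is correct. The paper itself does not give an independent proof of this theorem: it simply records that the result ``follows from Proposition 7 in \cite{BN1}''. Your Jacobi-field argument---restrict the Killing field $W$ to an arbitrary $h$-geodesic, use the identity
\[
\tfrac{1}{2}\tfrac{d^2}{dt^2}|J|^2 = |\nabla_{\dot\gamma}J|^2 - \langle R(J,\dot\gamma)\dot\gamma,\,J\rangle,
\]
observe that both right-hand terms are non-negative under non-positive sectional curvature, and conclude that constancy of $|W|_h$ forces $\nabla_v W=0$ for every $v$---is exactly the standard proof of that proposition in Berestovskii--Nikonorov. So you have supplied the argument the paper outsources; there is no substantive difference in approach, only in the level of detail presented.

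One small remark on the easy direction: strictly speaking, a parallel Killing field $W$ has \emph{constant} length, not necessarily unit length, so ``$(h,W)$ is navigation data for a strong Kropina metric'' requires the normalization you mention. The paper is equally informal on this point (see the sentence ``we can make it a unit Killing vector field'' after the preceding theorem), so your treatment matches theirs.
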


It follows from Proposition 7 in \cite{BN1}.

 In the case $M$ is compact, stronger results can be obtained.
 
 \begin{theorem}
 Let $(M,h)$ be a compact Riemannian manifold and $W$ a Killing vector field on $(M,h)$ such that $\Ric(W,W)\leq 0$. Then
  \item[{(i)}] $W$ is parallel on $(M,h)$, and $\Ric(W,W)=0$.
     \item[{(ii)}] $M$ has a strong Kropina metric induced by the navigation data $(h,W)$.
 \end{theorem}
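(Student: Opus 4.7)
The plan is to invoke the classical Bochner technique for Killing fields on a closed Riemannian manifold; this is precisely the tool behind the results from \cite{BN1} that the authors cite. The workhorse is the Bochner identity for a Killing field $W$:
\begin{equation*}
\tfrac{1}{2}\Delta |W|_h^2 \;=\; |\nabla^{(h)} W|_h^2 \;-\; \Ric(W,W),
\end{equation*}
which follows from the Killing equations $W_{i|j}+W_{j|i}=0$, the vanishing of $\div W$ for Killing fields, and the Ricci commutation formula. It requires nothing beyond $W$ being Killing.

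Next I would integrate over the compact (boundaryless) manifold $M$. Stokes' theorem annihilates the integral of $\Delta|W|_h^2$, leaving
\begin{equation*}
\int_M |\nabla^{(h)} W|_h^2 \, d\vol_h \;=\; \int_M \Ric(W,W)\, d\vol_h \;\leq\; 0,
\end{equation*}
by the hypothesis. The left-hand side is non-negative while the right-hand side is non-positive, so both integrals must vanish identically. This forces $\nabla^{(h)} W \equiv 0$ pointwise, hence $W$ is parallel on $(M,h)$; feeding this back into Bochner (or combining pointwise non-positivity with the vanishing of the integral) then yields $\Ric(W,W)\equiv 0$ as well. This establishes (i).

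For (ii), I would simply invoke the earlier theorem of Section 3 on parallel vector fields inducing strong Kropina metrics: since $W$ has just been shown to be parallel and, in the non-trivial case, of constant positive $h$-length, the navigation data $(h,W)$ (with $W$ rescaled by its constant length if one insists on the unit-length convention) yields a well-defined strong Kropina metric. The only real ``obstacle'' is bookkeeping: one must match the sign convention of the Laplacian to the one used in the paper, verify that ``compact Riemannian manifold'' here tacitly means closed so that no boundary contribution survives Stokes, and note that the trivial case $W \equiv 0$ is excluded implicitly. No substantive geometric difficulty arises beyond the classical Bochner argument, consistent with the authors' one-line appeal to \cite{BN1}.
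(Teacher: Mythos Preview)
Your proposal is correct and is precisely the classical Bochner--Yano argument that the paper invokes via the one-line citation to \cite{YB}; the paper gives no detailed proof of its own, so your write-up simply unpacks the reference. Nothing more is needed.
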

 
 Indeed, it is known that on such a Riemannian manifold, $W$ is automatically parallel, and hence it has constant length (see \cite{YB}).

 \begin{corollary}
  If $(M,h)$ is a compact Riemannian manifold of non-positive Ricci curvature, then $M$ has a strong Kropina metric induced by the navigation data $(h,W)$, where $W$ is any Killing vector field on $(M,h)$.
 \end{corollary}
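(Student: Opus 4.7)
The plan is to reduce the corollary directly to the preceding theorem. First I would observe that the hypothesis of non-positive Ricci curvature on $(M,h)$ means that $\Ric_p(v,v) \leq 0$ for every $v \in T_pM$ and every $p \in M$. Consequently, for any smooth vector field $W$, and in particular for any Killing vector field, the pointwise inequality $\Ric(W,W) \leq 0$ holds automatically, so the hypothesis of the preceding theorem is satisfied with no extra work.

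Second, I would invoke that theorem to conclude that $W$ is parallel on $(M,h)$ and that $\Ric(W,W) \equiv 0$. Parallelism immediately forces $|W|_h$ to be constant on $M$, so for a nontrivial $W$ the rescaling $\widetilde W := W/|W|_h$ is a smooth, globally defined vector field of unit $h$-length. Since rescaling by a (nonzero) constant preserves both the parallel and the Killing property, $\widetilde W$ is a unit Killing field on $(M,h)$, which is precisely the data required to produce a strong Kropina metric via the Zermelo navigation construction of \S 2.1.

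The main (and essentially only) point requiring attention is this normalization step: the preceding theorem delivers a parallel Killing field, but the Kropina construction needs a \emph{unit} wind, and one must verify that $|W|_h$ is a positive constant. Parallelism handles both: constancy is automatic, and positivity holds provided $W$ is not identically zero, a case which is of course excluded since the zero field produces neither wind nor conic domain. No new geometric input is required beyond what is already packaged in the preceding theorem; the corollary is essentially a bookkeeping statement that non-positive Ricci curvature lets one drop the Ricci hypothesis on $W$.
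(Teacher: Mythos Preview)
Your proposal is correct and follows essentially the same approach as the paper: both reduce to the fact that on a compact Riemannian manifold with non-positive Ricci curvature every Killing field is parallel (the paper cites Kobayashi directly, while you route through the preceding theorem, which itself rests on Yano--Bochner for the same classical result). Your explicit handling of the normalization step and the exclusion of the trivial field is more careful than the paper's one-line justification, but the underlying argument is the same.
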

 
 Indeed, on such Riemannian manifolds any Killing vector field is parallel and hence of constant length (see \cite{Ko}).
 
\begin{remark}\label{Neg Sectional curv not allowed}
Observe that $h$-negative sectional curvature case is not allowed because such manifolds do not admit strong Kropina metrics. Indeed, for the navigation data $(h,W)$ on $M$, we recall that
 		\[
 		\Ric_x(W,W)=\frac{1}{n-1}\sum_{i=1}^{n-1}h(R(W,Z_i)W,Z_i)=\frac{1}{n-1}\sum_{i=1}^{n-1}K(W,Z_i),
 		\]
 		where $Z_1,\dots,Z_n$ is an $h$-orthonormal basis of $T_xM$ such that $Z_n=W$, $R$ and $K$ are the curvature tensor and the sectional curvature of $h$, respectively.
 		
 		Then the $h$-negative sectional curvature implies $\Ric(W,W)<0$, but this is not possible (see Theorem \ref{thm: 1st properties}).
\end{remark}

  \begin{proposition}\label{integrable_distrib}
  If $(M,F)$ is a strong Kropina metric with navigation data $(h,W)$. Then the followings are equivalent
  \begin{enumerate}
  \item[{(i)}] The 1-form $W^{\flat}$ is closed on $M$,  where $W^{\flat}$ is the Legendre dual of $W$ with respect to $h$.
  \item[{(ii)}]  $W$ is parallel.
    \item[{(iii)}] The $h$-orthogonal vector distribution $W^{\perp}:=\cup_{x\in M}W^{\perp}_{x}$ is involutive, where  $W^{\perp}_{x}:=\{X\in T_{x}M:h(X,W)=0\}$.
    \end{enumerate}
  \end{proposition}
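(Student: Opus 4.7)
The plan is to extract two key identities from the assumption that $W$ is a unit Killing field, and then to read off each equivalence in one line. Writing $\nabla$ for the Levi--Civita connection of $h$, a direct computation of $dW^{\flat}$ in terms of $\nabla$ yields
\[
dW^{\flat}(X,Y) = h(\nabla_X W, Y) - h(\nabla_Y W, X),
\]
and the Killing equation $h(\nabla_X W, Y) + h(\nabla_Y W, X) = 0$ reduces this to $dW^{\flat}(X,Y) = 2\,h(\nabla_X W, Y)$. The unit length condition combined with Lemma \ref{integral lines geodesics} gives $\nabla_W W = 0$, while differentiating $h(W,W) \equiv 1$ along any $X$ yields $h(\nabla_X W, W) = 0$; together these imply $dW^{\flat}(W, \cdot) = 0$.

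First, I would derive (i) $\Leftrightarrow$ (ii) directly from the displayed formula: $dW^{\flat} \equiv 0$ is equivalent to $h(\nabla_X W, Y) = 0$ for all $X, Y$, i.e., $\nabla W \equiv 0$, which is the definition of $W$ being parallel.

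Next, for (i) $\Leftrightarrow$ (iii) I would invoke Frobenius' theorem: since $W^{\perp} = \ker W^{\flat}$ with $W^{\flat}$ nowhere vanishing, the distribution is involutive if and only if $dW^{\flat} \wedge W^{\flat} = 0$. Evaluating this 3-form on a local $h$-orthonormal frame $(W, e_1, \dots, e_{n-1})$ with $e_i \in W^{\perp}$, the only non-trivial components are $(dW^{\flat} \wedge W^{\flat})(W, e_i, e_j) = dW^{\flat}(e_i, e_j)$, so the Frobenius condition collapses to $dW^{\flat}|_{W^{\perp} \times W^{\perp}} \equiv 0$. Combined with $dW^{\flat}(W, \cdot) = 0$ already proved above, this forces $dW^{\flat} \equiv 0$, closing the cycle.

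The proof is essentially bookkeeping around the Killing equation and Frobenius' theorem; there is no serious obstacle. The one point worth emphasizing is that neither the Killing property nor the unit length of $W$ alone suffices for these equivalences — both are needed, which is precisely why strong Kropina manifolds form the natural setting for this statement.
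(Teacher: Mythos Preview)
Your proof is correct and follows the same logical skeleton as the paper's --- Killing plus closedness gives parallelism, and Frobenius handles the distribution --- but the execution differs in two worthwhile ways. The paper works in local coordinates: it writes the closedness condition as $\partial_j W_i - \partial_i W_j = 0$ and the Killing equation as $\partial_j W_i + \partial_i W_j - 2\gamma_{ij}^k W_k = 0$, and adds them to read off $W_{i|j}=0$; you instead package the same content into the single invariant identity $dW^{\flat}(X,Y) = 2\,h(\nabla_X W, Y)$, which makes (i)$\Leftrightarrow$(ii) a one-liner. More substantively, for (i)$\Leftrightarrow$(iii) the paper simply invokes Frobenius without further comment, whereas Frobenius only yields $dW^{\flat}\wedge W^{\flat}=0$, not $dW^{\flat}=0$ outright; your explicit use of $dW^{\flat}(W,\cdot)=0$ (from $\nabla_W W=0$ and $|W|_h=1$) is exactly what is needed to promote the Frobenius condition to full closedness, and it makes transparent why the \emph{strong} Kropina hypothesis is essential for this equivalence.
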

  \begin{proof}

 (i)$\Rightarrow$ (ii)
 
 If $(M,F)$ is a strong Kropina metric with navigation data $(h,W)$, then the conditions that $W^{\flat}$ is closed 
  and $W$ is Killing field give
  \begin{equation}
  \begin{cases}
  \frac{\partial W_{i}}{\partial x^{j}}-\frac{\partial W_{j}}{\partial x^{i}}=0,\\
  \frac{\partial W_i}{\partial x^j}+\frac{\partial W_j}{\partial x^i}-2\gamma_{ij}^kW_k=0,
  \end{cases}
  \end{equation}
   { for all } $i,j,k\in\{1,2,\dots,n\}$, $W=W^{i}\frac{\partial}{\partial x^{i}}$, and $W_{i}=h_{ij}W^{j}$,  where $\gamma_{ij}^k$ are the Christoffel symbols of the Riemannian metric $h$.
  From here we get $\frac{\partial W_i}{\partial x^j}-\gamma_{ij}^kW_k=0$, that is $W$ is parallel.
  
  (ii)$\Rightarrow$ (i)

  If $W$ is parallel on $(M,h)$, that is
  \[
  \frac{\partial W_{i}}{\partial x^{j}}-\gamma_{ij}^{k}W_{k}=0, \textrm{ for all } i,j\in\{1,2,\dots,n\},
  \]
   then by interchanging $i$ and $j$ and subtraction we obtain
  \[
  \frac{\partial W_{i}}{\partial x^{j}}-\frac{\partial W_{j}}{\partial x^{i}}=0, \textrm{ for all } i,j\in\{1,2,\dots,n\},
  \]
and hence $W^{\flat}$ is closed. 

  (i)$\Leftrightarrow$ (iii)
  The use of Frobenius theorem proves this equivalence.
  $\qedd$
  
  \end{proof}
  
  In the 2-dimensional case, a more detailed classification of manifolds admitting strong Kropina metrics is possible.

 Let us  recall the following definition
    \begin{definition}\rm{(\cite{BN1})}\label{def: quasi-regular}
    	
    	Let $W$ be   a complete Killing vector field of constant length
on the Riemannian manifold $(M,h)$. Then $W$ is called
    	\begin{enumerate}
    		\item[{(i)}] {\it quasi-regular} if all integral curves of $W$ are closed (there might exist closed curves of different $h$-lengths);
    		\item[{(i)}] {\it regular} if it is quasi-regular and all integral curves have the same $h$-lengths.
    	\end{enumerate}
    

    	
    \end{definition}
    
  Then we obtain the following classification theorem.

  \begin{theorem}\label{thm: topological classification in two dimensions}
  Let $M$ be a 2-dimensional manifold admitting a strong Kropina metric induced by the navigation data $(h,W)$. Then
  \begin{enumerate}
 \item[{(i)}] The Riemannian surface $(M,h)$ is flat, i.e. $M$ is isometric to one of the manifolds: Euclidean plane, straight cylinder (in the non-compact case), or flat torus, M\"obius band, Klein bottle (in the compact case).
 \item[{(ii)}] In the cases $M$ isometric to M\"obius band, Klein bottle, $W$ is quasi-regular.
 \end{enumerate}
  \end{theorem}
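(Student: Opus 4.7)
The plan is first to show that admitting a unit Killing field on a $2$-dimensional manifold forces $(M,h)$ to be flat, then to read off the allowed topology from the classical Killing--Hopf classification of complete flat surfaces, and finally to check by hand that on the two non-orientable cases the integral curves of $W$ close up.

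To establish flatness I exploit the peculiarity of dimension~$2$: the $h$-orthogonal distribution $W^{\perp}$ is a line field, hence trivially involutive. Proposition \ref{integrable_distrib} then forces $W$ to be parallel on $(M,h)$, so $R(X,Y)W=0$ for every $X,Y$. Combining this with the dimension-two curvature identity $R(X,Y)Z=K\bigl(h(Y,Z)X-h(X,Z)Y\bigr)$ and testing on $X\in W^{\perp}$, $Z=W$ gives $K\equiv 0$. Completeness of $(M,h)$ was assumed at the start of \S 3, so $(M,h)$ is a complete flat Riemannian surface.

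For (i), I then invoke the Killing--Hopf classification: every complete connected flat Riemannian surface is isometric to a quotient $\R^{2}/\Gamma$ with $\Gamma$ a discrete subgroup of Euclidean isometries acting freely and properly discontinuously, and the short list of such $\Gamma$ produces exactly the five models in the statement (the Euclidean plane, the straight cylinder, the flat torus, the M\"obius band and the Klein bottle). Conversely each of these models visibly carries a unit Killing field, obtained by descending a parallel translation of $\R^{2}$ whose generator lies in the centralizer of $\Gamma$.

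For (ii), I present the M\"obius band as $\R^{2}/\langle\sigma\rangle$ with the glide reflection $\sigma(x,y)=(x+1,-y)$, and the Klein bottle as $\R^{2}/\langle\sigma,\tau\rangle$ with $\tau(x,y)=(x,y+1)$. A nonzero parallel vector field on $\R^{2}$ is a constant vector field $(a,b)$, and it descends iff it is $\sigma$-invariant (the $\tau$-invariance is automatic); since $\sigma_{*}(a,b)=(a,-b)$ this forces $b=0$. The integral curves of $W$ are then horizontal lines, and the $\sigma$-identification glues $(x+1,y_{0})$ to $(x,-y_{0})$, so every orbit in the quotient returns to itself after horizontal length~$2$ (and length~$1$ along the axis $y_{0}=0$ of $\sigma$ in the M\"obius band, respectively along the half-integer levels $2y_{0}\in\mathbb{Z}$ in the Klein bottle). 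Every integral curve is thus closed, which is the definition of quasi-regularity. The main obstacle will essentially be bookkeeping: picking a fundamental domain that makes the glide-reflection structure transparent and verifying the $\sigma$-invariance of the candidate $W$; the remaining ingredients are the already-proved Proposition \ref{integrable_distrib}, the two-dimensional curvature identity, and Killing--Hopf.
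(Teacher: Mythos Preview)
Your argument is correct and considerably more detailed than the paper's treatment, which simply reads ``This follows from Corollary 6 in \cite{BN1}.'' So the routes differ: the paper defers to the Berestovskii--Nikonorov classification, whereas you supply a self-contained proof built out of the paper's own Proposition \ref{integrable_distrib} and the Killing--Hopf list of complete flat surfaces.

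A small comment on the flatness step: invoking ``$W^{\perp}$ is a line field, hence involutive'' and then Proposition \ref{integrable_distrib} is logically valid, but it slightly hides the mechanism. In dimension~$2$ the involutivity of a rank-one distribution is vacuous---the restriction of the $2$-form $dW^{\flat}$ to the line $W^{\perp}$ is automatically zero---and the actual content of Proposition \ref{integrable_distrib} being used is that $\iota_{W}\,dW^{\flat}=0$, which comes from $\nabla^{(h)}_{W}W=0$ (Lemma \ref{integral lines geodesics}). In two dimensions that alone already kills the only component of $dW^{\flat}$, giving $W$ parallel. Your argument reaches the same conclusion, just via the packaged form of the proposition.

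Your treatment of (ii) is also clean and correct: lifting $W$ to the universal cover, using parallelism to see it is constant, and then checking $\sigma_{*}$-invariance forces the vertical component to vanish. Compared with citing \cite{BN1}, your approach has the advantage that it makes explicit \emph{why} only the non-orientable models force quasi-regularity (the glide reflection singles out a unique direction for $W$, and that direction is periodic in the quotient), and it reveals the different orbit lengths (period $1$ along the soul, period $2$ elsewhere)---information the bare citation does not display. One minor quibble: the open M\"obius band $\R^{2}/\langle\sigma\rangle$ is in fact non-compact, so the paper's placement of it under ``the compact case'' is a slip in the statement itself; your proof is unaffected by this.
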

  
  This follows from Corollary 6 in \cite{BN1}.

  A stronger condition is the Killing property.
  
  \begin{definition}
  A Riemannian manifold $(M,h)$ is said to have the {\it Killing property} if, in some neighborhood $U$ of each point $x\in M$, there exists an $h$-orthonormal frame $\{X_1,\dots,X_n \}$ such that each $X_i$, $i\in\{1,2,\dots,n\}$, 
  is a Killing field. Such a frame is called a {\it Killing frame}.
  
  If the Killing frame is defined globally on $M$, then $(M,h)$ is called {\it manifold with the global Killing property}.
  \end{definition}
  
  \begin{remark}
  \begin{enumerate}
  \item A global Killing frame defines a parallelization of $(M,h)$.
  \item Every Lie group with bi-invariant Riemannian metric has the global Killing property.
  \end{enumerate}
  \end{remark}
  

 From Proposition 9 in \cite{BN2} we obtain
 
 \begin{theorem}
 Let $(M,h)$ be an $n$-dimensional simply connected, complete Riemannian manifold with the local Killing property.
 Then
 \begin{enumerate}
  \item[{(i)}] $(M,h)$ has the global Killing property.
  \item[{(ii)}] $(M,h)$ is a symmetric Riemannian manifold.
  \item[{(iii)}] Each of the linearly independent unit Killing fields on $M$ induces a  strong Kropina structure on $M$.
  \end{enumerate}
\end{theorem}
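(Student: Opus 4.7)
The plan is to follow the proof strategy of Proposition 9 in \cite{BN2} and split the statement into three semi-independent steps. Part (iii) is essentially tautological, part (i) is an analytic-continuation argument made possible by simple connectedness, and part (ii) is the real content.

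For (i), I would exploit the classical fact that a Killing vector field $X$ on a Riemannian manifold is globally determined by its $1$-jet at a single point, because $X$ satisfies the second-order linear ODE
\[
\nabla_{Y}\nabla_{Z} X \;=\; R(X,Y)Z
\]
along every smooth curve, where $R$ is the curvature tensor of $h$. Given a locally defined Killing frame on a neighborhood $U$ of a point $p\in M$, completeness of $h$ lets me extend each frame field along any smooth curve issued from $p$ by solving this ODE. Standard monodromy applied in the simply connected manifold $M$ shows that the extension along two homotopic curves with the same endpoints coincides, so the local Killing fields extend to globally defined Killing fields $X_1,\dots,X_n$ on $M$. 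Since the defining relations (Killing equations, orthonormality, unit length) are preserved by covariant-constant transport of the relevant bilinear forms, $\{X_1,\dots,X_n\}$ remains an $h$-orthonormal Killing frame globally; this is exactly the global Killing property.

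For (ii), I would first observe that the existence of a global Killing frame makes the isometry group of $(M,h)$ act transitively: the flows of the $X_i$ generate an open orbit around every point, and completeness plus simple connectedness promote this to global transitivity, so $(M,h)$ is homogeneous. To upgrade homogeneity to symmetry I would show that the curvature tensor is parallel, i.e.\ $\nabla R = 0$. Each $X_i$ is Killing, so $\mathcal{L}_{X_i}R=0$, and since the $X_i$ form an orthonormal frame the identity
\[
(\nabla_{X_i}R)(\cdot,\cdot)\,\cdot \;=\; (\mathcal{L}_{X_i}R)(\cdot,\cdot)\,\cdot \;-\; \text{terms involving }\nabla X_i
\]
reduces $\nabla R = 0$ to an algebraic identity among the connection coefficients of the Killing frame, which the Killing equations force to vanish. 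By the Cartan--Ambrose--Hicks correspondence, a complete simply connected Riemannian manifold with parallel curvature is a Riemannian symmetric space.

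For (iii), the argument is a direct unwinding of the definition: each linearly independent unit Killing field $W=X_i$ produced by (i) is, by the definition given in \S 3, the navigation datum of a strong Kropina metric on $M$ via the Zermelo construction, and distinct $X_i$ yield distinct navigation data and hence distinct strong Kropina structures.

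The main obstacle is step (ii), and more specifically the passage from having an orthonormal Killing frame to the parallelism of $R$; everything else in the argument is either a direct appeal to standard extension theorems on simply connected complete manifolds or a straightforward application of the definitions already laid out.
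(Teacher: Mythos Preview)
The paper itself offers no proof of this theorem: it simply cites Proposition~9 of \cite{BN2} and moves on. So your sketch already goes well beyond what the paper does, and a comparison of ``approaches'' is moot. That said, your reconstruction has two genuine gaps worth flagging.

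In part~(i), the analytic-continuation step extending each local Killing field to a global one is fine, but the assertion that ``orthonormality and unit length are preserved'' is not justified and is not automatic. For two Killing fields $X,Y$ one has $Z\,h(X,Y)=-h(\nabla_{X}Y+\nabla_{Y}X,\,Z)$, which need not vanish, so $h(X_i,X_j)$ is not constant merely because each $X_i$ is Killing. You need an additional argument (for instance, first establishing $\nabla R=0$ locally, hence local symmetry, then using the global symmetry of a complete simply connected locally symmetric space to transport the frame by isometries) to conclude that the extended fields remain orthonormal. In the sources the logical order is essentially the reverse of yours: local symmetry is proved first, and the global frame is obtained afterwards via the transitive isometry group.

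In part~(ii), the heart of the matter is the implication ``orthonormal Killing frame $\Rightarrow$ $\nabla R=0$'', and your sentence ``reduces $\nabla R=0$ to an algebraic identity among the connection coefficients of the Killing frame, which the Killing equations force to vanish'' hides all of the content. The difference $\mathcal{L}_{X_i}R-\nabla_{X_i}R$ involves contractions of $R$ with $\nabla X_i$, and the Killing condition alone (skew-symmetry of $\nabla X_i$) does not make these vanish; one genuinely needs the full structure of the orthonormal Killing frame and a nontrivial computation (this is the D'Atri--Nickerson result underlying \cite{BN2}). As written, this step is an assertion rather than an argument.

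Part~(iii) is, as you say, immediate from the definitions once (i) is in hand.
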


  Observe that due to the Killing property there are infinitely many unit Killing fields on $M$ and each of them induces a  strong Kropina structure.

  Remarkably, in this case a complete classification is also possible (see Theorem 11 in \cite{BN2}).
  
  \begin{theorem}\label{thm: topological classification in the complete case}
  Let $(M,h)$ be a simply connected, complete Riemannian manifold. Then the followings are equivalent
  \begin{enumerate}
    \item[{(i)}] $M$ admits $n$ distinct
      strong Kropina structures induced from the navigation data $(h,W_i)$, where $\{W_1,\dots,W_n\}$
      is a global Killing frame.
    \item[{(ii)}] M is isometric to a direct product of Euclidean spaces,
    compact connected simple Lie groups with bi-invariant metrics, 7-dimensional
    round spheres (some factors can be absent).
  \end{enumerate}
  \end{theorem}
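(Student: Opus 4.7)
The plan is to combine the preceding theorem (which identifies simply connected complete Riemannian manifolds with the Killing property as symmetric spaces) with the de~Rham decomposition theorem and a classification of irreducible symmetric factors admitting a parallelization by Killing vector fields. Concretely, I would first prove $(i)\Rightarrow(ii)$ as follows. The existence of the global Killing frame $\{W_1,\dots,W_n\}$ is precisely the global (hence local) Killing property, so the preceding theorem gives that $(M,h)$ is a simply connected, complete, symmetric Riemannian manifold. I would then apply the de~Rham decomposition to write $M=M_0\times M_1\times\cdots\times M_k$ as a Riemannian product with $M_0$ Euclidean and each $M_i$, $i\geq 1$, simply connected and irreducible. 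Since the splitting is holonomy-invariant and isometries permute the irreducible factors, the Killing flows of the $W_j$ preserve each factor; orthogonally projecting $\{W_1,\dots,W_n\}$ to each $TM_i$ yields a family of Killing vector fields that span $TM_i$ at every point, so each $M_i$ is parallelizable as a smooth manifold and inherits the global Killing property.

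The technical heart of the proof is the classification of irreducible simply connected Riemannian symmetric spaces admitting a global Killing frame. Combining the Cartan list of such symmetric spaces with the restriction of being parallelizable (invoking the Hurwitz--Adams theorem to rule out all spheres except $S^1$, $S^3$, $S^7$), one shows that the only irreducible factors which can occur are compact connected simple Lie groups with bi-invariant metrics and the round sphere $S^7$. The Lie group case is natural because, for a bi-invariant metric, left-invariant vector fields are Killing and form a parallelization; the $S^7$ case is exceptional and relies on the octonionic parallelization together with the transitive isometric action of $\mathrm{Spin}(7)$ with isotropy $G_2$. This classification step is the main obstacle and is essentially the content of Theorem~11 in \cite{BN2}.

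For the converse $(ii)\Rightarrow(i)$, I would exhibit the Killing frame factor by factor and then concatenate. On a Euclidean factor $\R^m$ the standard coordinate vector fields are parallel unit fields, hence unit Killing. On a compact connected simple Lie group $G$ with bi-invariant metric $h$, any $h$-orthonormal basis of the Lie algebra $\mathfrak{g}$ extended by left translation yields a global orthonormal frame of left-invariant vector fields, each Killing because right translations are $h$-isometries. On the round $S^7$ the octonionic parallelization supplies seven pointwise orthonormal unit Killing fields. Assembling these frames over all factors produces $n$ pointwise linearly independent unit Killing vector fields on $M$, each of which induces a strong Kropina structure by definition. The backward direction is thus explicit once the decomposition in (ii) is in hand, confirming that the entire difficulty is concentrated in the classification step described above.
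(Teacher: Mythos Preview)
The paper does not supply its own proof of this theorem; it simply invokes Theorem~11 of \cite{BN2}. Your proposal is therefore more detailed than what the paper offers: you sketch the architecture (de~Rham decomposition, classification of irreducible symmetric factors admitting a Killing parallelization, factor-by-factor construction for the converse) and correctly identify that the decisive classification step is exactly the cited result in \cite{BN2}. In that sense your approach and the paper's coincide --- both defer the substantive work to Berestovskii--Nikonorov.

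One step in your outline is not quite right as stated. After projecting the global orthonormal Killing frame $\{W_1,\dots,W_n\}$ onto an irreducible de~Rham factor $M_i$, you obtain $n$ Killing fields on $M_i$ that \emph{span} $T_xM_i$ at every point, but this alone implies neither that $M_i$ is parallelizable nor that $M_i$ has the global Killing property: the three rotational Killing fields on $S^2$ span the tangent space everywhere, yet $S^2$ admits no nowhere-vanishing vector field. What does survive the projection is that each component $W_j^{(i)}$ is a Killing field of \emph{constant length} on $M_i$ (since $|W_j|^2=\sum_i|W_j^{(i)}|^2=1$ and each summand depends only on the corresponding factor), and exploiting this constant-length structure is part of what \cite{BN2} actually does. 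Since you already flag the classification step as the content of \cite{BN2}, this is a repairable gap in the scaffolding rather than a fatal error.
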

  
  \section{The geodesics and exponential map of strong Kropina spaces}
  \subsection{The geodesics equation}
  \hspace{0.2in}  In this section, we suppose that the Kropina space $(M,F:A\to \R^+)$  is a globally defined strong Kropina space, that is, the vector field $W$ is  a unit Killing vector field with respect to $h$.
  The existence of such a vector field imposes topological restriction on $M$ as seen in the previous section.

  For any point $p$ of $M$, there exist a neighborhood $U$ of $p$, a positive number $\epsilon$ and a local $1$-parameter group of local $h$-isometries $\varphi_t : U \longrightarrow M$, 
  $\varphi_t(x)=\varphi(t,x)$, 
  $t\in I_\epsilon=(-\epsilon, \epsilon)$ which induces the given $W$.

  Given the $h$-arclength parameterized Riemannian geodesic $\rho:(-\varepsilon,\varepsilon)\to M$,
  with initial conditions $\rho(0)=p$, $\dot{\rho}(0)=v$, we consider the curve
  \begin{equation}\label{def of P}
  \mathcal P(t) :=\varphi_t(\rho(t)), \textrm{ for any } t\in (-\varepsilon,\varepsilon).
  \end{equation}
  
  By derivation we get
  \begin{equation}\label{P dot}
    \dot{\mathcal P}(t) =W_{\mathcal P(t)}+{(\varphi_t)}_{*\rho(t)}(\dot{\rho}(t)), \textrm{ for any } t\in (-\varepsilon,\varepsilon),
    \end{equation}
    in particular, $t=0$ gives
    \begin{equation}
        \dot{\mathcal P}(0) =W_{p}+v,
        \end{equation}
        and hence we obtain $\dot{\mathcal P}(0)\neq 0$ if and only if $v\neq -W_{p}$.

  \begin{lemma}\label{lem: non-degenerate curve}
  If $\rho:(-\varepsilon,\varepsilon)\to M$ is an $h$-geodesic on $M$ with initial  conditions   
   \begin{equation}\label{rho initial cond}
   \rho(0)=p,\quad \dot{\rho}(0)=v\neq -W_p,
   \end{equation}
   then the curve $\mathcal P(t)$ given by
  \eqref{def of P} is non-degenerate, i.e. $\dot{\mathcal{P}}(t)\neq 0$ for all
  $t\in (-\varepsilon,\varepsilon)$.
  
  \end{lemma}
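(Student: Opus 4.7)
The plan is to show directly that $|\dot{\mathcal P}(t)|_h^2$ is a positive constant along $\rho$, from which the non-vanishing of $\dot{\mathcal P}(t)$ follows. The key observation is that, starting from \eqref{P dot}, we can express the $h$-norm of $\dot{\mathcal P}(t)$ in three pieces which are each transported back from $\mathcal P(t)$ to $\rho(t)$ by the isometry $\varphi_t$.

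First I would expand
\[
|\dot{\mathcal P}(t)|_h^2 = |W_{\mathcal P(t)}|_h^2 + 2\, h\bigl(W_{\mathcal P(t)},(\varphi_t)_{*\rho(t)}\dot\rho(t)\bigr) + \bigl|(\varphi_t)_{*\rho(t)}\dot\rho(t)\bigr|_h^2 .
\]
The first term equals $1$ because $W$ has $h$-unit length. For the third term, since $\varphi_t$ is an $h$-isometry, $|(\varphi_t)_{*}\dot\rho(t)|_h=|\dot\rho(t)|_h$, which is constant in $t$ because $\rho$ is an $h$-geodesic. For the cross term, the decisive fact is that the flow of $W$ preserves $W$ itself, i.e.\ $(\varphi_t)_*W=W$; combined with the isometry property of $\varphi_t$ this yields
\[
h\bigl(W_{\mathcal P(t)},(\varphi_t)_{*}\dot\rho(t)\bigr) = h_{\rho(t)}\bigl(W_{\rho(t)},\dot\rho(t)\bigr).
\]

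Next I would show that this last scalar is constant along $\rho$. Differentiating,
\[
\frac{d}{dt} h\bigl(W_{\rho(t)},\dot\rho(t)\bigr) = h\bigl(\nabla^{(h)}_{\dot\rho}W,\dot\rho\bigr) + h\bigl(W,\nabla^{(h)}_{\dot\rho}\dot\rho\bigr).
\]
The second term vanishes because $\rho$ is an $h$-geodesic, and the first vanishes by the skew-symmetry of $\nabla^{(h)}W$ coming from the Killing equation, which gives $h(\nabla^{(h)}_X W,X)=0$ for every $X$.

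Combining these three observations, $|\dot{\mathcal P}(t)|_h^2$ is independent of $t$ and therefore equals its value at $t=0$, namely $|W_p+v|_h^2$. Since by assumption $v\neq -W_p$, this is strictly positive, and hence $\dot{\mathcal P}(t)\neq 0$ for every $t\in(-\varepsilon,\varepsilon)$. The only non-routine step is the identification of the cross term through $(\varphi_t)_*W=W$; everything else is a direct application of the Killing property and the geodesic equation for $\rho$.
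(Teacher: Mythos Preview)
Your proof is correct but takes a different route from the paper. The paper argues by contradiction: rewriting \eqref{P dot} as $\dot{\mathcal P}(t)=(\varphi_t)_{*\rho(t)}(W_{\rho(t)}+\dot\rho(t))$, it suffices to rule out $\dot\rho(t_0)=-W_{\rho(t_0)}$ for some $t_0$. If that held, then since integral curves of the unit Killing field $W$ are $h$-geodesics (Lemma \ref{integral lines geodesics}), uniqueness of geodesics would force $\rho$ to be the integral curve of $-W$ through $\rho(t_0)$, and evaluating at $t=0$ gives $\dot\rho(0)=-W_p$, contradicting the hypothesis. Your approach instead computes $|\dot{\mathcal P}(t)|_h^2$ directly and shows it is constant, equal to $|W_p+v|_h^2>0$; this is a genuinely different argument that bypasses geodesic uniqueness and instead uses only the Killing equation and the flow-invariance $(\varphi_t)_*W=W$. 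Your method has the bonus of yielding the explicit constant value of $|\dot{\mathcal P}(t)|_h$, which is slightly more information than the lemma asks for; the paper's argument, on the other hand, ties the result more visibly to Lemma \ref{integral lines geodesics} and the geometric picture that $\rho$ cannot coincide with a reversed integral curve of $W$.
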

  \begin{proof}
  	Since from (\ref{P dot}) we have $\dot{\mathcal P }(t)={(\varphi_t)}_{*\rho(t)}(W_{\rho(t)}+\dot{\rho}(t))$, we have only to show that $W_{\rho(t)}+\dot{\rho}(t)\ne 0$ for all $t\in (0, \epsilon)$.

  	 Let us suppose that there exists a $t_0>0$ such that $\dot{\rho}(t_0)$ and $-W_{\rho(t_0)}$
  	 coincide as vectors of the vector space $T_{\rho(t_0)}M$.
  	  	 	Since $W$ is a unit  Killing vector field, the curve $\varphi_{-(t-t_0)}(\rho(t_0))$ is an $h$-unit speed geodesic passing through $\rho(t_0)$ with the tangent vector $-W_{\rho(t_0)}$ at $\rho(t_0)$.
  	 	Therefore, we get $\rho(t)=\varphi_{-(t-t_0)}(\rho(t_0))$, and from here it follows
  	 	$\dot{\rho}(t)=-W_{\varphi_{-(t-t_0)}(\rho(t_0))}$, for any $t$.
  	 	Putting $t=0$, we have  $\dot{\rho}(0)=-W_{\varphi_{t_0}(\rho(t_0))}=-W_p$  because of $\varphi_{t_0}(\rho(t_0))=p$.
  	 	This is a contradiction with \eqref{rho initial cond}.
  	  $\qedd$
  \end{proof}
  
  If $\rho$ is an $h$-geodesic, subject to the initial conditions
  \eqref{rho initial cond} and $\mathcal P(t)$ is the curve given by \eqref{def of P}, on the strong Kropina manifold $(M,F:A\to \R^+)$,
    then we will prove that
    \begin{enumerate}
    \item[{(i)}] $\mathcal P(t)$ is an $F$-admissible curve, i.e. $\dot{\mathcal P}(t)\in A_{\mathcal P(t)}$, for all $t$;
    \item[{(ii)}] $F(\mathcal P(t),\dot{\mathcal P}(t))=1$, i.e. $\mathcal P$ is $F$-arclength
    parametrized when we use the same parameter as on $h$;
    \item[{(iii)}] $\mathcal P(t)$ is a geodesic of the strong Kropina metric $F$ induced by the
    navigation data $(h,W)$.
    \end{enumerate}
  
  The first claim is easy to prove. Due to \eqref{conic domain by inequality} we need to show that $h(\dot{\mathcal P}(t),W_{\mathcal P(t)})>0$. Indeed, relation \eqref{P dot} implies
  \begin{equation*}
  \begin{split}
   h(\dot{\mathcal P}(t),W_{\mathcal P(t)}) &=|W_{\mathcal P(t)}|_h^2
+
    h({(\varphi_t)}_{*\rho(t)}(\dot{\rho}(t)),W_{\mathcal P(t)})=1+    h(\dot{\rho}(t)_{\rho(t)},W_{\rho(t)})   \\
   & =1+ \cos (\dot{\rho}(t)_{\rho(t)},W_{\rho(t)})     >0,
  \end{split}
  \end{equation*}
  where we have used that $\varphi_t$ is an isometry and Lemma \ref{lem: non-degenerate curve}. So (i) is proved.
  
 Using again \eqref{P dot} and Remark \ref{rem: equiv}, the claim (ii) is obvious.
 
 Finally, one can show that the curve \eqref{def of P} is indeed an $F$-geodesic by some local computation similar with the Randers case (see \cite{R}),  
 (see Theorem 1.2  in \cite{JV} for a more general case).

  It follows that, if
 $\rho:(-\varepsilon,\varepsilon)\to M$ is an $h$-arclength parametrized
 geodesic with initial conditions \eqref{rho initial cond}, then the curve $\mathcal P$
 given in \eqref{def of P} is an $F$-arclength parametrized Kropina geodesic.
 
 Conversely, suppose that a curve $\mathcal P:(-\varepsilon,\varepsilon)\to M$ is an
 $F$-arclength parametrized Kropina geodesic, that is $\mathcal P$ satisfied conditions (i),
 (ii), (iii) above. Then, similarly with our discussion above, one can see that the curve
 $\rho:(-\varepsilon,\varepsilon)\to M$, $\rho(t)=\varphi_{-t}( \mathcal{P}(t))$ is an
 $h$-arclength Riemannian geodesic on $M$ with initial conditions \eqref{rho initial cond}.

 Therefore, we obtain
 \begin{theorem}\label{Theorem 3.9}
 Let $(M, F=\alpha^2/\beta : A\to\R^+)$ be a globally defined strong Kropina space with  navigation data $(h, W)$.
 Let $p$ be any point of $M$, and let $\{\varphi_t\}_{t\in I_\epsilon}$ be a local $1$-parameter group of local  isometries on $(M, h)$ which generate the unit vector field $W$.
 
 Then the following two statements hold:
 \begin{enumerate}[(i)]
  \item  If the curve $\rho(t)$ is an $h$-geodesic on $(M, h)$  parameterized as $|\dot{\rho}(t)|_h=1$ and satisfies the conditions $\rho(0)=p$ and $\dot{\rho}(0)\ne -W(\rho(0))$, then the curve $\mathcal P(t):=\varphi_t( \rho(t))$, $t\in I_\epsilon$, is an $F$-geodesic of the Kropina space $(M, F=\alpha^2/\beta : A\to\R^+)$ which is parameterized as $F(\mathcal P(t), \dot{\mathcal P}(t))=1$.
 \item   If  the curve $\mathcal P(t)$, $t\in I_\epsilon$, is an $F$-geodesic on the Kropina space $(M, F)$  parameterized as $F(\mathcal P(t), \dot{\mathcal P}(t))=1$, the curve $\rho(t):=\varphi_{-t}( \mathcal P(t))$, $t\in I_\epsilon$, is an $h$-geodesic on $(M, h)$ which is parameterized as $|\dot{\rho}(t)|_h=1$.
 \end{enumerate}
 \end{theorem}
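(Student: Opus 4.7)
The plan is to leverage the bijective correspondence, furnished by the flow $\varphi_t$ of $W$, between $h$-unit speed Riemannian curves (not pointing in the direction of $-W$ at the initial point) and $F$-unit speed admissible Kropina curves. The discussion preceding the statement has already established, for any $\rho$ satisfying the hypotheses of (i), that $\mathcal{P}(t) = \varphi_t(\rho(t))$ is $F$-admissible (by Lemma \ref{lem: non-degenerate curve}) and of $F$-unit speed (by Remark \ref{rem: equiv}). What remains in (i) is the geodesic property; part (ii) will then follow by the symmetric application of the same correspondence.

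My preferred route is a length-preservation argument. First I would verify the pointwise identity that, for any $F$-admissible $F$-unit speed curve $\widetilde{\mathcal{P}}$, the associated curve $\widetilde{\rho}(t) = \varphi_{-t}(\widetilde{\mathcal{P}}(t))$ satisfies $\dot{\widetilde{\rho}}(t) = (\varphi_{-t})_{*}\bigl(\dot{\widetilde{\mathcal{P}}}(t) - W_{\widetilde{\mathcal{P}}(t)}\bigr)$ and hence $|\dot{\widetilde{\rho}}(t)|_h = 1$, using that $\varphi_{-t}$ is an $h$-isometry (since $W$ is Killing), that $(\varphi_{-t})_{*} W = W$ (since $W$ generates $\varphi$), and Remark \ref{rem: equiv}. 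Consequently, $F$-arclength parametrizations correspond bijectively to $h$-arclength parametrizations, and the $F$-length of $\widetilde{\mathcal{P}}$ on $[0,T]$ equals the $h$-length of $\widetilde{\rho}$ on $[0,T]$.

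With this length identity in hand, the geodesic claim in (i) follows by a minimization argument. If $\mathcal{P}$ failed to be an $F$-geodesic on some short subinterval $[0,T]$, there would exist an $F$-admissible competitor $\widetilde{\mathcal{P}}$ joining $p$ to $\mathcal{P}(T)$ of strictly smaller $F$-length. Its image $\widetilde{\rho}(t) = \varphi_{-t}(\widetilde{\mathcal{P}}(t))$ would then join $p$ to $\varphi_{-T}(\mathcal{P}(T)) = \rho(T)$ with strictly smaller $h$-length than $\rho$, contradicting the fact that short $h$-geodesics are locally minimizing. Openness of the conic set $A$ ensures that sufficiently $C^1$-small variations of $\mathcal{P}$ remain $F$-admissible, since $\dot{\mathcal{P}}(t)$ lies strictly in the interior of $A_{\mathcal{P}(t)}$ by the strict inequality established in the admissibility verification. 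Part (ii) is then obtained symmetrically: given an $F$-geodesic $\mathcal{P}$, one sets $\rho(t) = \varphi_{-t}(\mathcal{P}(t))$, notes that $|\dot{\rho}|_h = 1$ by the length identity, and observes that any $h$-shortening variation of $\rho$ would lift through $\varphi_t$ to an $F$-shortening variation of $\mathcal{P}$, contradicting the geodesic hypothesis.

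The main obstacle I anticipate is the rigorous handling of variations in the conic setting, since $F$ is not defined on all of $TM$ and the standard first-variation machinery presumes smoothness near the tested tangent directions; this is however addressed by the general conic Finsler framework invoked in the paper (\cite{JV}, Theorem 1.2). An alternative, cleaner route that avoids variations altogether is the local ODE computation alluded to after Lemma \ref{lem: non-degenerate curve}: substitute the ansatz $\mathcal{P}(t) = \varphi_t(\rho(t))$ into the Kropina geodesic equations written in local coordinates, and use that $\varphi_t$ is an $h$-isometry together with $W$ being auto-parallel along its integral curves (Lemma \ref{integral lines geodesics}) to reduce the Kropina system to the $h$-geodesic equation for $\rho$. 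Either approach closes both directions of the theorem.
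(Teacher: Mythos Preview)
Your primary route---the length-preservation and minimization argument---has a genuine gap. The identity you establish, namely $|\dot{\widetilde\rho}(t)|_h = |\dot{\widetilde{\mathcal P}}(t)-W|_h$, coincides with $F(\dot{\widetilde{\mathcal P}}(t))$ \emph{only} on the unit-speed locus $F(\dot{\widetilde{\mathcal P}})=1$; for a general $F$-admissible vector $y$ one has $|y-W|_h^2 = |y|_h^2 - 2h(y,W)+1$, which is not $F(y)^2$. Consequently, if $\widetilde{\mathcal P}:[0,T]\to M$ is a fixed-endpoint competitor with strictly smaller $F$-length (hence \emph{not} $F$-unit speed on $[0,T]$), the curve $\widetilde\rho(t)=\varphi_{-t}(\widetilde{\mathcal P}(t))$ does share the endpoints $p$ and $\rho(T)$, but you cannot conclude $\mathcal L_h(\widetilde\rho)=\mathcal L_F(\widetilde{\mathcal P})$; indeed $|y-W|_h>F(y)$ occurs (take $y$ parallel to $W$ with $|y|_h$ large). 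If instead you reparametrize $\widetilde{\mathcal P}$ to $F$-unit speed on $[0,\widetilde T]$ with $\widetilde T<T$, then $\widetilde\rho(\widetilde T)=\varphi_{-\widetilde T}(\mathcal P(T))\neq \rho(T)$ and the endpoints no longer match. The correspondence $\mathcal P\leftrightarrow\rho$ is parameter-dependent, not a bijection of unparametrized paths with fixed endpoints, so neither packaging of the comparison closes. A first-variation version runs into the same obstruction: at $F(\dot{\mathcal P})=1$ one computes $dF_{\dot{\mathcal P}}(v)=h(\dot{\mathcal P}-W,v)/h(\dot{\mathcal P},W)$ while $d|\dot{\mathcal P}-W|_h(v)=h(\dot{\mathcal P}-W,v)$, and the $t$-dependent factor $1/h(\dot{\mathcal P},W)$ prevents the two Euler--Lagrange conditions from being identified by your map.

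Your alternative route---substituting $\mathcal P(t)=\varphi_t(\rho(t))$ into the Kropina geodesic spray and reducing to the $h$-geodesic equation using that $\varphi_t$ is an $h$-isometry and $W$ is auto-parallel---is exactly what the paper does (it cites \cite{R} for the analogous Randers computation and \cite{JV}, Theorem~1.2, for the general statement). That is the argument you should carry out; the minimization detour does not work as written.
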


 
 Some further computations give us conditions for the Kropina metric $F=\frac{\alpha^2}{\beta}$ to be projectively equivalent to the Riemannian structure $a$.

 \begin{proposition}\label{Proposition 4.4}
 	Let $(M,F:A\to \R^+)$  be a Kropina manifold. Then the followings are equivalent
 	\begin{enumerate}
 		\item 	$F$ is projectively equivalent to the Riemannian structure $a=(a_{ij})$;
 		\item the covariant vector $b(x)=(b_1(x),\dots, b_n(x))$ is parallel with respect to the Riemannian metric $a$;
 		\item $\kappa=$ constant and the covariant vector $W(x)=(W_1(x),\dots,W_n(x))$ is parallel with respect to $h$, where the indices of $W$ are lowered by means of $h$.
 	\end{enumerate}
If any of the above statements hold, then the Kropina space $(M,F:A\to \R^+)$ is strong.
 \end{proposition}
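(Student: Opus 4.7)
The plan is to anchor everything in the explicit conformal relation $a = e^{-\kappa} h$ between the two Riemannian metrics, which gives $b_i = 2 e^{-\kappa} W_i$ with $W_i := h_{ij} W^j$. Translating the covariant derivative $b_{i|j}^{(a)}$ via the standard conformal transformation of the Christoffel symbols, a short calculation yields the master identity
\begin{equation*}
b_{i|j}^{(a)} = 2 e^{-\kappa}\left[ W_{i|j}^{(h)} + \tfrac{1}{2}(W_j\, \kappa_i - W_i\, \kappa_j) - \tfrac{1}{2} h_{ij}\, W(\kappa)\right],
\end{equation*}
where $\kappa_i := \partial_i \kappa$ and $W(\kappa) = W^i \kappa_i$. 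This single relation is the backbone of (2)~$\Leftrightarrow$~(3).

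The direction (3)~$\Rightarrow$~(2) is immediate, since $\kappa$ constant and $W_{i|j}^{(h)}=0$ annihilate the right-hand side. The converse (2)~$\Rightarrow$~(3) is the subtle step. Assuming $b_{i|j}^{(a)}\equiv 0$ and decomposing the bracket into its symmetric and antisymmetric parts in $(i,j)$ yields
\begin{equation*}
W_{(i|j)}^{(h)} = \tfrac{1}{2} h_{ij}\, W(\kappa), \qquad W_{[i|j]}^{(h)} = \tfrac{1}{2}\bigl(W_i\, \kappa_j - W_j\, \kappa_i\bigr).
\end{equation*}
The indispensable extra input is that $|W|_h \equiv 1$ is built into the navigation data, whose differentiation yields $W^i W_{i|j}^{(h)} = 0$. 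Contracting each displayed equation with $W^i$, the common quantity $W^i W_{j|i}^{(h)}$ cancels upon comparison, leaving $\kappa_j = 0$. So $\kappa$ is constant; reinserting this into either equation collapses it to $W_{i|j}^{(h)} = 0$, giving (3).

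For (1)~$\Leftrightarrow$~(2) I would use the standard $(\alpha,\beta)$-spray machinery. The geodesic spray of the Kropina metric decomposes as $G_F^i = G_a^i + \Xi^i$, where $\Xi^i$ is a rational expression in $\alpha$, $\beta$, $b^i$, $b^2$ and the symmetric/antisymmetric parts $r_{ij}, s_{ij}$ of $b_{i|j}^{(a)}$. If $b$ is $a$-parallel then $r_{ij}=s_{ij}=0$, so $\Xi^i$ vanishes, the two sprays coincide, and $F$ is trivially projectively equivalent to $a$. For the converse, projective equivalence forces $\Xi^i = P(x,y)\, y^i$ for some scalar $P$; expanding $\Xi^i$ as a rational function of $y$ and matching it against $P\, y^i$ degree by degree forces $r_{ij}$ and $s_{ij}$ to vanish separately, whence $b$ is $a$-parallel. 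This polynomial-in-$y$ comparison in the explicit Kropina spray is the main technical obstacle.

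Finally, any of (1)--(3) implies strong Kropina: by (3), $W$ is parallel on $(M,h)$, and a parallel vector field is automatically Killing because $\nabla^{(h)}W=0$ together with metric compatibility gives $(\mathcal L_W h)(Y,Z) = h(\nabla^{(h)}_Y W, Z) + h(Y, \nabla^{(h)}_Z W) = 0$. Combined with $|W|_h = 1$ from the navigation construction, this exhibits $W$ as an $h$-unit Killing field, so $F$ is a strong Kropina metric.
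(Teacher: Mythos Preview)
Your proposal is correct and, for the equivalence (1)~$\Leftrightarrow$~(2), essentially coincides with the paper's argument: the paper also writes the Kropina spray as $\overline{G}^i = {\gamma_0}^i{}_0 + 2B^i$ with an explicit $B^i$, transvects the equation $B^i = P y^i$ by $y_i$ and by $b_i$, and then uses divisibility of polynomials in $y$ (that $\beta^2$ is not divisible by $\alpha^2$, etc.) to force $r_{ij}=s_{ij}=0$. Your ``degree by degree'' description is this same mechanism, just less explicitly written out.

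For (2)~$\Rightarrow$~(3) you take a genuinely different route. The paper observes that $b_{i;j}=0$ immediately makes $b^2 = a^{ij}b_ib_j$ constant, and since the navigation dictionary gives $b^2 e^{\kappa}=4$, constancy of $\kappa$ drops out for free; plugging $\kappa_i=0$ into the same master identity (the paper's equations for $r_{ij}$ and $s_{ij}$) then yields $W_{i|j}=0$ directly. Your argument instead contracts the symmetric and antisymmetric parts with $W^i$ and exploits $|W|_h\equiv 1$ to cancel $W^iW_{j|i}$ and isolate $\kappa_j=0$. Both work, and both rest on the same master identity you wrote down (which matches the paper's formulas). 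The paper's shortcut via $b^2 e^{\kappa}=4$ is a bit more economical, while your contraction argument has the virtue of using only the unit-length condition on $W$ rather than the specific algebraic form of the dictionary; either way the content is the same.
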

 
The proof is merely a local computation that can be found in Appendix. The final statement follows from Lemma \ref{integral lines geodesics}.

%
 
 \subsection{The exponential map}

 \hspace{0.2in} Let $(M, F=\alpha^2/\beta : A\to\R^+)$ be a Kropina (not necessarily strong) space and let $x\in M$.

 Let $c_y:(-\epsilon, \epsilon)\to M$ be an $F$-geodesic with the  conditions $c(0)=x$ and $\dot{c}(0)=y\in A_x$.
   By  homogeneity, one has $c_y(\lambda t)=c_{\lambda y}(t)$ for any $\lambda>0$, and therefore
 if we choose a suitable $\lambda$, for example  $\lambda=\epsilon/2$, the curve $c_{\lambda y}(t)$ is defined at $t=1$.
 Hence, there exists a neighborhood $U$ of $0$ in $T_xM$  and a mapping
  $  A_x \cap U  \longrightarrow M, \hspace{0.2in}
         y   \longmapsto  c_y(1)$.

 Now, we will give the following definition:
 \begin{definition}
 Let $(M, F=\alpha^2/\beta : A\to\R^+)$ be a Kropina  space. A Kropina metric $F$  is said to be {\it forward (resp. backward) complete} if every Kropina geodesic $c(t)$ on $I=(-\epsilon, \epsilon)$  can be extended to a Kropina geodesic defined on $(-\epsilon, \infty)$ (resp. $ (-\infty, \epsilon)$).
 $F$ is said to be {\it bi-complete} if it is both  forward complete and  backward complete.
 \end{definition}

 We define the exponential  map $\exp_x$  as follows.

 \begin{definition}
 The mapping $\exp_x : A_x \longrightarrow M$ defined by
 \begin{eqnarray*}
  \exp_x(y):=c_y(1)
 \end{eqnarray*}
 is called   the {\it Kropina exponential map} at $x$.
 In the case $F$ forward complete, $\exp_x{(t y)}$ is defined for $t\in (0, \infty)$.
 \end{definition}
 
  Clearly, we have
 \[c_y(t)=c_{ty}(1)=\exp_x(t y), \hspace{0.2in} t>0.\]
 
 Furthermore, we extend the  Kropina exponential map $\exp_x$ at $x$.
 \begin{definition}
  The map $\overline{\exp}_x : A_x \cup \{0_x\}  \longrightarrow M$ is defined by
 \begin{eqnarray*}
 \overline{\exp}_x(y)={\exp_x(y),\hspace{0.1in}\ (y\in A_x) ,
                       \atopwithdelims\{.x,      \hspace{0.4in} (y=0_x).}
 \end{eqnarray*}
 We call the Kropina  geodesic $\overline{\exp}_x{(t y)}$,    {\it a radial geodesic}.
 \end{definition}
 
 Due to $c_y(0)=x$, we have $c_y(t)=\overline{\exp}_x(t y)$, $t\ge 0$.

 Let $y\in A_x$ and $\lambda >0$.
 Since $(\exp_x)^i{(\lambda y)}=c^i_y(\lambda)$, we have
 \begin{eqnarray*}
 \frac{\partial (\exp_x)^i}{\partial y^k}(\lambda y) \lambda {\delta^k}_j=\frac{\partial c^i_y}{\partial y^j}(\lambda),
 \end{eqnarray*}
 that is,
 \begin{eqnarray*}
 \frac{\partial (\exp_x)^i}{\partial y^j}(\lambda y)  =\frac{1}{\lambda}\frac{\partial c^i_y}{\partial y^j}(\lambda).
 \end{eqnarray*}
 Note that $c^i_y(0)=x$ for all $y\in A_x$, and hence
 \begin{eqnarray*}
         \frac{\partial c^i_y}{\partial y^j}(0)=0.
 \end{eqnarray*}
 Therefore, we have
 \begin{equation*}
 \frac{\partial (\exp_x)^i}{\partial y^j}(\lambda y)  =\frac{1}{\lambda}\frac{\partial c^i_y}{\partial y^j}(\lambda)
                                                            =\frac{1}{\lambda}\bigg(\frac{\partial c^i_y}{\partial y^j}(\lambda)-\frac{\partial c^i_y}{\partial y^j}(0)\bigg).
 \end{equation*}
 Letting $\lambda \longrightarrow +0$, we have
 \begin{eqnarray*}
    \lim_{\lambda \longrightarrow +0}\frac{\partial (\exp_x)^i}{\partial y^j}(\lambda y)
 =  \lim_{\lambda \longrightarrow +0}\frac{1}{\lambda}\bigg(\frac{\partial c^i_y}{\partial y^j}(\lambda)-\frac{\partial c^i_y}{\partial y^j}(0)\bigg)
 =  \frac{\partial^2 c^i_y}{\partial y^j\partial t}(0)
 =\frac{\partial}{\partial y^j}(y^i)\
 ={\delta^i}_j,
 \end{eqnarray*}
 and hence
 \begin{eqnarray*}
        \lim_{\lambda \longrightarrow +0}\frac{\partial (\overline{\exp}_x)^i}{\partial y^j}(\lambda y)
                    = \lim_{\lambda \longrightarrow +0}\frac{\partial (\exp_x)^i}{\partial y^j}(\lambda y)       ={\delta^i}_j.
 \end{eqnarray*}

 Next, we will consider the exponential map of a strong Kropina space $(M, F=\alpha^2/\beta : A\to\R^+)$ with   navigation data $(h, W)$.
 Using Theorem \ref{Theorem 3.9} we will relate the Kropina  exponential map to the Riemannian one. 
 Let us denote by $e_x : T_xM \longrightarrow M$ the usual exponential map of $(M, h)$.
 
 Recall that, for any  $y\in A_x\subset T_xM$, $F(x,y)=1$, we have $|y-W(x)|_h=1$, and therefore
 \begin{equation}\label{F and h-exponential maps}
  \exp_x{(ty)}=\varphi_{ t  }  \circ e_x\bigg(t(y-W(x))\bigg).
 \end{equation}

 \begin{eqnarray*}
                                                           &exp_x&\\
                                       A_x   \hspace{0.05in}&\longrightarrow &    M\\
       {\it(-W)}-{\it translation} \hspace{0.1in}    \bigg{\downarrow}   \hspace{0.1in}  &  \hspace{1in}&\bigg{\uparrow}     \varphi_{t}\\
                                      T_xM  &\longrightarrow & M\\
                                               &e_x&
 \end{eqnarray*}
 
 (see Figures \ref{figure 1}, \ref{figure 2} below).

 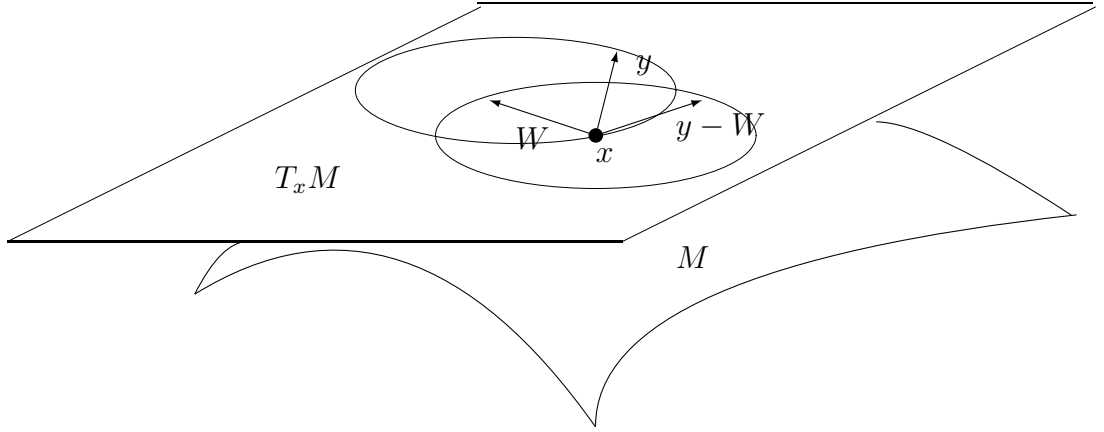
\begin{figure}[H]
 \begin{picture}(250,220)
              \put(120,80){\qbezier (0,.100) (80,50) (150,-50)}
             \put(170,30){\qbezier (100,.20) (100,60) (280,80)}
             \put(275,145){\qbezier (100,.20) (120, 0) (173,-35)}
           \put(120,80){\qbezier (0,.100) (10, 20) (20,20)}
              \put(50,100){\line(1,0){230}}
              \put(50,100){\line(2,1){177}}
              \put(226,190){\line(1,0){230}}
              \put(280,100){\line(2,1){177}}
              \put(270,140){\circle*{5}}
              \put(270, 140){\vector(3,1){40}}
               \put(270,140){\vector(-3,1){40}}
                \put(270, 140){\ellipse{120}{40}}
               \put(270, 140){\vector(1,4){8}}
                \put(240,157){\ellipse{120}{40}}
               \put(300, 90){$M$}
              \put(150, 120){$T_xM$}
              \put(270, 130){$x$}
               \put(300,140){$y-W$}
                \put(240, 135){$W$}
               \put(285, 165){$y$}
 \end{picture}
 \caption{The rigid displacement of the $h$-unit tangent sphere gives the Kropina indicatrix}
 \label{figure 1}
 \end{figure}%
 
 \vspace{2cm}
 
 \begin{figure}[H]
 
 \begin{picture}(200,80)
        \put(250,50){\ellipse{100}{60}}
      \put(285,20){$S_x^{h}M:=\{y\in T_xM : ||y||_h)=1\}$}
     \put(226,76){\ellipse{100}{60}}
     \put(30,80){$S_xM:=\{y\in T_xM : F(y)=1\}$}
    \put(325, 130){$\mathcal P(t):=\varphi_t(\rho(t))$}
     \linethickness{0.2pt}
       \put(250,50){\vector(2,1){39}}
    \linethickness{2pt}
     \put(250,50){\vector(-2,3){19}}
     \linethickness{4pt}
     \put(250,50){{\vector(1, 3){15}}}
     \put(285,75){$y-W$}
     \put(220,48){$W$}
     \put(265,70){${y}$}
     
    \linethickness{0.1pt}
     \put(150,50){\vector(1,0){200}}
    
    \linethickness{0.1pt}
    \put(250,0){\vector(0,1){120}}
  
  \put(250,50){\qbezier (0,.90) (50,20) (100,10)}
     \put(299,63){\circle*2}
     \put(280, 50){$e_x (y-W)$}
     \put(350, 60){$\rho(t):=\varphi_{-t}(\mathcal P (t))$}
   \put(250,50){\qbezier (0,.200) (30,80) (100,100)}
    \put(275,97){\circle*2}
   \put(280,97){$exp_xy$}
   \end{picture}\\
 \caption{The exponential map of a strong Kropina space vs the exponential map of the associated $h$-Riemannian space.}\label{figure 2}
 \end{figure}
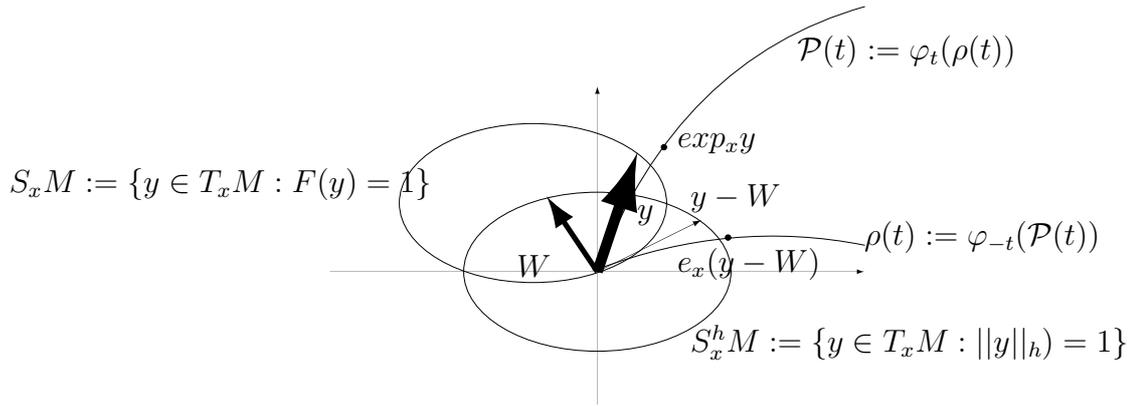%

 \subsection{Kropina metric balls}
 
 We recall some definitions from \cite{JS}.
 
 If $c:[a,b]\to M$ is an $F$-admissible curve on a Kropina manifold $(M,F=\alpha^2/\beta :A\to \R^+)$, that is $\dot{c}(t)\in A_{c(t)}$, $t\in [a,b]$, then the $F$-$length$ of $c$ is defined by
 \begin{equation*}
 \mathcal L_F(c|_{[a,b]})=\int_a^b F(c(t),\dot{c}(t))dt.
 \end{equation*}
 
 \begin{definition}
 	Let $(M,F)$ be a Kropina manifold, and $p,q\in M$. It is said that $p\ precedes\ q$, denoted $p\prec q$, if there exists an $F$-admissible piecewise $C^\infty$-curve from $p$ to $q$.
 \end{definition}
 
 One can define now the {\it forward} and {\it backward $F$-connected domain} of $p$ by
 \[
 \mathcal D_p^+:=\{q\in M\ :\ p\prec q\}\ \textrm{ and }  \mathcal D_p^-:=\{q\in M\ :\ q\prec p\},
 \]
 respectively, that is the set of points $q\in M$ that can be joined to $p$ with $F$-admissible piecewise $C^\infty$-curves from $p$ to $q$, or from
 $q$ to $p$, respectively. We have preferred this naming to {\it future} and {\it past} of $p$ in   \cite{JS} because it clarifies the geometrical meaning.

 
 \begin{definition}\label{F-separation}
 Let $p,q\in M$ be two arbitrary points on $M$ and let $\Gamma_{pq}^F$ be the set of $F$-admissible piecewise $C^\infty$-curves from $p$ to $q$. We define the $F$-
 {\it separation} $d_F:M\times M\to [0,\infty]$ from $p$ to $q$
 by
  \begin{equation*}
 d_F(p,q)=
 \begin{cases}
  \inf_{c\in \Gamma_{pq}^F}\mathcal L_F(c),\quad \textrm{ if } \Gamma_{pq}^F\neq \emptyset\\
  0,\ \qquad\qquad \qquad\textrm{ if } p=q\\
  \infty, \qquad \qquad \qquad \textrm{otherwise }.
   \end{cases}
  \end{equation*}
   \end{definition}
   
Even though it can take infinite values, the $F$-separation can have similar properties with the distance function of a classical Finsler space (called a quasi-metric in \cite{SSS}).

\begin{remark}
Despite of the similarities of a separation function with a Finslerian quasi-metric, there are essential differences as follows.
Notice that, in general, $d_F$ will take the infinite value for some pair of points and will also have some discontinuities (see \cite{CJS}, Theorem 4.5). 


\end{remark}

 We can define the $d_F$-metric {\it forward} and {\it backward balls} at a point $p\in M$ by
 \begin{equation*}
 \begin{split}
 & \mathcal{B}^+_F(p,r)=\{q\in M\ :\ \Gamma_{pq}^F\neq \emptyset,\ d_F(p,q)<r\} \textrm{  and}\\
 & \mathcal{B}^-_F(p,r)=\{q\in M\ :\ \Gamma_{pq}^F\neq \emptyset,\ d_F(q,p)<r\},
 \end{split}
 \end{equation*}
 respectively.

 \begin{remark}(\cite{JS})
 	\begin{enumerate}
 		\item On a strong Kropina space, the binary relation $\prec$ is transitive.
 		\item For any $p\in M$, the sets  $\mathcal D^+_p$ and $\mathcal D^-_p$ are open subsets of $M$.
 	\end{enumerate}
 
 \end{remark}

 \begin{remark}
 		\item Observe that for any  $p\in M$, locally, always there exists a point $q\in M$ such that $p$ can be joined to $q$ by a short $F$-geodesic. Therefore, $\mathcal{D}_p^+\neq\emptyset$, for any $p\in M$. Similarly $\mathcal{D}_p^-\neq\emptyset$, for any $p\in M$. We point out that this property is not related to the completeness of $(M,F)$ since our considerations here are essentially local ones.
 \end{remark}
 
 If $(M,F)$ is a strong Kropina manifold with navigation data $(h,W)$, then taking into account Remark \ref{rem: equiv} and Theorem \ref{Theorem 3.9}, we can see that, for  any two points $p,q\in M$, $q\in \mathcal{D}_p^+$, we have
 \[
 d_F(p,q)=d_h(p,q^-)=d_h(q,q^-),
 \]
 where $q^-=  \varphi_{-l}(q) $, and $l=d_F(p,q)$.
 
 \begin{remark}
 It can be seen that there exists a relation between the $h$-length and the $F$-length of a vector $y\in A\subset TM$. Indeed, due to the Zermelo's navigation construction presented in Section \ref{sec: Zermelo Navigation}, the tangent Kropina ball $B_p^F(1):=\{y\in T_pM : F(y)<1 \}$ is naturally included in the $h$-tangent ball $B_p^h(2)=B_p^{\frac{1}{4}h}(1)$, i.e. $B_p^F(1)\subseteq B_p^{\frac{1}{4}h}(1)$, that is 
 \begin{equation}\label{lower bounded F}
 F(y)\geq \frac{1}{2}||y||_{h}=||y||_{\frac{1}{4}h},
 \end{equation}
  for any $y\in A\subset TM$ (this is called in \cite{JS} a Riemannian lower bound for $F$, see same reference for a theory of more general lower bounded conic Finsler spaces).

 
 %

 Also observe that for any points $p$, $q\in M$, we have $\frac{1}{2}d_h(p,q)\leq d_F(p,q)$. Indeed, if $d_F(p,q)=\infty$, then the inequality is trivial. In the case $d_F(p,q)$ is finite, for any curve $\gamma\in \Gamma_{p,q}$ from $p$ to $q$,  relation \eqref{lower bounded F} implies $\frac{1}{2}\mathcal L_h(\gamma)\leq  \mathcal L_F(\gamma)$, where $\mathcal L_h(\gamma)$ and  $\mathcal L_F(\gamma)$ are the integral lengths of $\gamma:[a,b]\to M$ with respect to $h$ and $F$, respectively. Then, by using the definition of the infimum, for any $\varepsilon>0$, there exists a curve $\gamma_\ve\in \Gamma_{p,q}$ such that 
 \[
 d_F(p,q)> \mathcal L_F(\gamma_\ve)-\ve \geq \frac{1}{2}\mathcal L_h(\gamma_\ve)
 -\ve \geq \frac{1}{2}d_h(p,q)-\ve,
 \]
 and the desired formula follows for $\ve \to 0$.
 
 Moreover, $\mathcal{B}_F^+(p,\ve)\subseteq \mathcal{B}_h(p,2\ve)$ (see Figure \ref{fig: F geodesic ball vs h half geodesic ball}). Indeed, for any $x\in \mathcal{B}_F^+(p,\ve)$, 
that is $d_F(p,x)<\ve$, we have
 \[
  \frac{1}{2}d_h(p,x)\leq d_F(p,x) <\ve
 \]
 and hence $x\in \mathcal{B}_h(p,2\ve)$.



 \begin{figure}[h]
 	\begin{center}
 		\setlength{\unitlength}{1cm}
		\begin{picture}(9,6)
		\centering
 		\begin{tikzpicture}
 		\draw (0,0) ellipse (45mm and 30mm);
		\draw (-1,0) ellipse (20mm and 15mm);
 		\draw (1.75,0) ellipse (27.5mm and 15mm);
 		\end{tikzpicture}
 		\put(-5.55,3){{\tiny x}}
 		\put(-5.8,3){$p$}
		\put(-7,1){$\mathcal{B}_{h}(p, \varepsilon) $}
 		\put(-2,4){$\mathcal{B}_{F}^+(p, \varepsilon) $}
 		\put(-9.5,5){$\mathcal{B}_{h}(p, 2\varepsilon) $}
 		\end{picture}
 		\caption{The Kropina geodesic ball $\mathcal{B}_{F}^+(p, \varepsilon) $ vs the  $h$-geodesic ball $\mathcal{B}_{h}(p, 2\varepsilon) $.}\label{fig: F geodesic ball vs h half geodesic ball}
 	\end{center}
 \end{figure}
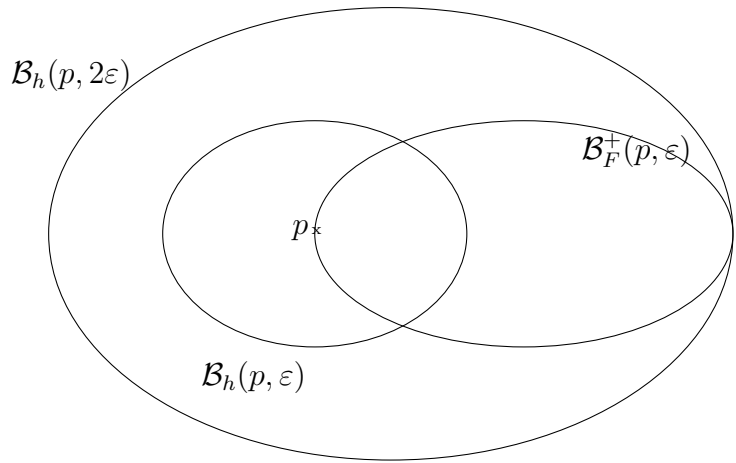


\bigskip 

 \end{remark}

 The following properties of the Kropina distance follow naturally (compare with the more general case of an arbitrary conic metric \cite{JS}).
 \begin{theorem}
  Let $(M,F)$ be a strong Kropina manifold with navigation data $(h,W)$. Then we have
 \begin{enumerate}[(i)]
  \item The forward and backward $d_F$-balls are open subsets of $M$.
 \item The collection of $d_F$-metric open forward  (respectively backward) balls  
forms a topological basis of $M$.
 \item  For sufficient small $r>0$, the tangent balls  ${ B_{A_p}(r) \cup }S_{A_p}(r)=\{y\in A_p : F(p,y)\leq r\}$ and the  {$d_F$-metric } balls $\mathcal{B}^+_F(p,r)$ are mapped each other by the exponential map $\overline{\exp}_p$.

 \end{enumerate}
 \end{theorem}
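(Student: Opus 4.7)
The plan is to prove the three assertions in sequence, each relying on earlier material from the excerpt.

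For part (i), fix $q\in \mathcal{B}^+_F(p,r)$ and set $\eta := r-d_F(p,q)>0$; I want to exhibit an open $M$-neighborhood $V$ of $q$ contained in $\mathcal{B}^+_F(p,r)$. Choose an $F$-admissible curve $c:[0,\ell]\to M$ from $p$ to $q$, $F$-arclength parametrized and with $\mathcal L_F(c)<d_F(p,q)+\eta/4$, whose final segment may be taken to be a radial $F$-geodesic (by replacing the tail with a short $F$-geodesic produced inside a normal neighborhood of $q$). For $s>0$ small, put $q_s:=c(\ell-s)$. The identity $d\overline{\exp}_{q_s}|_{0_{q_s}}=\mathrm{id}$ computed in \S 4.2, together with smooth dependence of the geodesic flow on its base point and initial velocity, guarantees that $\overline{\exp}_{q_s}$ is a local diffeomorphism near $s\dot c(\ell-s)\in A_{q_s}$, a vector that it sends to $q$. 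The image of a sufficiently small open neighborhood of $s\dot c(\ell-s)$ in $A_{q_s}$ is then an open set $V\ni q$ in $M$. For $q'=\overline{\exp}_{q_s}(v')\in V$ with $v'$ close to $s\dot c(\ell-s)$, concatenating $c|_{[0,\ell-s]}$ with the radial $F$-geodesic $t\mapsto \overline{\exp}_{q_s}(tv')$ yields an $F$-admissible curve of $F$-length less than $(d_F(p,q)+\eta/4-s)+(s+\eta/4)<r$, so $q'\in \mathcal{B}^+_F(p,r)$. The argument for backward balls is symmetric.

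For part (ii), openness is delivered by (i), so only the basis property needs verification. Given $x\in U\subset M$ open, set $p:=\varphi_{-\epsilon}(x)$ for $\epsilon>0$ small, where $\varphi_t$ is the (complete) flow of $W$. Applying Theorem \ref{Theorem 3.9} to the $h$-unit-speed geodesic $\rho(t)=\varphi_t(p)$ (which is indeed an $h$-geodesic by Lemma \ref{integral lines geodesics}) produces the unit-speed $F$-geodesic $\mathcal P(t)=\varphi_{2t}(p)$, whence $d_F(p,x)\le \epsilon/2$ (setting $t=\epsilon/2$). Consequently $x\in \mathcal{B}^+_F(p,\epsilon/2+\epsilon')$ for any $\epsilon'>0$. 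The Riemannian lower bound $\tfrac12 d_h\le d_F$ from the previous remark gives $\mathcal{B}^+_F(p,r)\subset \mathcal{B}_h(p,2r)\subset \mathcal{B}_h(x,2r+\epsilon)$ via the $h$-triangle inequality. Choosing $\epsilon$ and $\epsilon'$ small enough that $\mathcal{B}_h(x,2r+\epsilon)\subset U$ finishes the argument.

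For part (iii), the identity $d\overline{\exp}_p|_{0_p}=\mathrm{id}$ combined with the inverse function theorem produces $r_1>0$ on which $\overline{\exp}_p$ restricts to a diffeomorphism from $\{y\in A_p\cup\{0_p\}:F(y)\le r_1\}$ onto its image. To match this image with the corresponding $d_F$-ball I would invoke the minimality of short radial $F$-geodesics (the conic Gauss lemma advertised in \S 5.1): for $r\le r_0\le r_1$ with $r_0$ below the injectivity radius, every radial geodesic $t\mapsto \overline{\exp}_p(ty)$ with $F(y)\le r$ realizes $d_F(p,\overline{\exp}_p(y))=F(y)$, and conversely every $q$ with $d_F(p,q)<r$ is the endpoint of the unique radial minimizer of length $d_F(p,q)$ and hence lies in the image of the tangent ball. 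This establishes the bijective correspondence between tangent and metric balls.

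The structural obstacle runs through both (i) and (iii): since $A_p$ is only an open half-space of $T_pM$, $\overline{\exp}_p$ cannot cover a full $M$-neighborhood of $p$, and openness in (i) must be obtained by shifting to a nearby base point $q_s$ whose forward cone already surrounds $q$. Beyond this conic workaround, the deepest ingredient is the conic Gauss lemma of \S 5.1 on which (iii) rests; its proof requires a careful conic adaptation of the classical Finsler argument.
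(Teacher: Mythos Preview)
Your proposal is correct, and your route is more hands-on than the paper's. The paper dispatches all three items in a couple of sentences: it records the comparison property that for every $x\in\mathcal B_h(p,2r)$ one can find $q,q'\in\mathcal B_h(p,2r)$ and $\varepsilon>0$ with $x\in\mathcal B_F^+(q,\varepsilon)\cap\mathcal B_F^-(q',\varepsilon)$ and $\mathcal B_F^+(q,\varepsilon)\cup\mathcal B_F^-(q',\varepsilon)\subset\mathcal B_h(p,2r)$, cites Javaloyes--S\'anchez for this, and says the theorem follows. The only ingredient the paper makes explicit is the Riemannian lower bound $F\ge\tfrac12|\cdot|_h$ and its consequence $\mathcal B_F^+(p,\varepsilon)\subset\mathcal B_h(p,2\varepsilon)$, which you also use in part~(ii).

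The genuine differences are in (i) and (iii). For (i) you supply the missing mechanism directly: backing up along an almost-minimizing admissible curve to a base point $q_s$ whose exponential map already surrounds $q$ by an open set, then concatenating. This is the right conic substitute for the usual triangle-inequality proof that metric balls are open, and it is what the paper suppresses by citing the reference. For (iii) you appeal to the conic Gauss lemma of \S5.1; the paper does not argue (iii) separately at all. Your observation that the reverse inclusion (every $q$ with $d_F(p,q)<r$ lies in the exponential image) is the nontrivial half is accurate: it comes out of the proof of Theorem~5.2, since an admissible curve of length $<r$ cannot exit $\overline{\exp}_p\bigl(B_{A_p}(r)\cup S_{A_p}(r)\cup\{0\}\bigr)$ without first crossing $\overline{\exp}_p(S_{A_p}(r))$ and thereby acquiring length $\ge r$---and for Kropina the closed tangent ball is the honest $h$-ball of radius $r$ centered at $rW_p$, so its boundary really is $S_{A_p}(r)\cup\{0\}$.

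What each approach buys: the paper's line is shorter and emphasizes that these facts are instances of general conic-Finsler phenomena. Yours is self-contained, makes the special Kropina geometry (the flow of $W$, Lemma~\ref{integral lines geodesics}, the identification of the tangent ball with an $h$-ball) do the work, and makes visible exactly where the half-space nature of $A_p$ forces the shifted-base-point trick.
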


Indeed, it is easy to see that if we consider any fixed $p\in M$ and $r>0$,  then for any $x\in \mathcal{B}_h(p,2r)$, there exists two points $q, q' \in \mathcal{B}_h(p,2r)$ and some $\ve>0$ such that 
\begin{enumerate}
	\item $x\in \mathcal{B}^+_F(q,\ve)\cap \mathcal{B}^-_F(q',\ve)$, and
	\item  $\mathcal{B}^+_F(q,\ve)\cup \mathcal{B}^-_F(q',\ve)\subseteq \mathcal{B}_h(p,2r) $,
\end{enumerate} 
(see \cite{JS}). The statements in the theorem follows immediately. Observe that the exponential map is surjective in this case.

 \section{The minimality of Kropina geodesics}
 \subsection{Short geodesics}
 \hspace{0.2in} To begin with, we will formulate and prove the  Gauss Lemma for Kropina spaces.
 
 \begin{lemma}{\bf (The Gauss Lemma, Kropina version)}\label{Lemma 3.22}
 Let $(M, F=\alpha^2/\beta:A\to \R^+)$ be a Kropina space. Let $x\in M$, fix $y\in A_x$ and set $r=F(x, y)$.
 Suppose that $\exp_x{y}$ is defined  in A.
 Let $T\in A$ denote the velocity vector field of a Kropina radial     geodesic $c(\tau):=\overline{\exp}_x(\tau y)$, $0\le \tau \le1$, emanating from $x$.
 Fix any instant $\tau \in (0, 1]$. Take any vector $V$ in the tangent space of
   \[ S_{A_x}(\tau r):=\{ y\in  A_x : F(x, y)=\tau r \}.\]
 
 Then, at the time  location $\tau $ of our Kropina radial geodesic, we have the orthogonality relation:
 \[g_{T}((\exp_{x*})_{\tau y}V, T)=0.\]
 \end{lemma}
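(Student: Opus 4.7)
The plan is to adapt the classical variational proof of the Finslerian Gauss Lemma (as in \cite{BCS}, Ch.~6) to the conic Kropina setting, taking care that every tangent vector encountered remains inside the conic domain $A$ so that the fundamental tensor $g_T$ and the first variation formula for the Finsler energy are both at our disposal. Concretely, I would first pick a smooth curve $s\mapsto y(s)\in S_{A_x}(\tau r)$ with $y(0)=\tau y$ and $y'(0)=V$, which exists precisely because $V$ is tangent to $S_{A_x}(\tau r)$ at $\tau y$, and then define the two-parameter variation
\[
\Sigma(t,s):=\overline{\exp}_x\!\left(t\,\tfrac{y(s)}{\tau r}\right),\qquad t\in[0,\tau r],\ |s|<\delta.
\]
Because $F(x,y(s)/(\tau r))=1$ for every $s$, each longitudinal curve $t\mapsto\Sigma(t,s)$ is an $F$-unit-speed radial geodesic, and a straightforward chain-rule computation gives
\[
\partial_s\Sigma(\tau r,0)=(\exp_{x*})_{\tau y}V, \qquad \partial_t\Sigma(\tau r,0)=T\!\bigl(c(\tau)\bigr),
\]
so the quantity to be killed is exactly the boundary value $g_T(\partial_s\Sigma,\partial_t\Sigma)\big|_{t=\tau r,\,s=0}$.

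Now consider the energy $E(s):=\int_0^{\tau r} F^2(\Sigma,\partial_t\Sigma)\,dt$. Since each longitudinal curve is $F$-unit speed, $E(s)\equiv\tau r$ and therefore $E'(0)=0$. On the other hand, the Finslerian first variation formula with reference vector $T=\partial_t\Sigma$ yields
\[
\tfrac{1}{2}E'(0)=\bigl[g_T(\partial_s\Sigma,\partial_t\Sigma)\bigr]_{t=0}^{t=\tau r}-\int_0^{\tau r} g_T\!\bigl(\partial_s\Sigma,\,D_{\partial_t\Sigma}\partial_t\Sigma\bigr)\,dt.
\]
The bulk integral vanishes because $\Sigma(\cdot,0)$ is a geodesic, and the lower boundary term vanishes because $\Sigma(0,s)=x$ forces $\partial_s\Sigma(0,0)=0$. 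What survives is exactly $g_T((\exp_{x*})_{\tau y}V,T)$, which must therefore be zero.

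The main obstacle is verifying that the variation remains $F$-admissible on the entire rectangle $[0,\tau r]\times(-\delta,\delta)$, so that $g_T$ along $\partial_t\Sigma$ and the first-variation formula are legitimately available. This follows from a continuity argument: the radial velocity $T(c(\tau))$ lies in the interior of $A$ for every $\tau\in(0,1]$ (the hypothesis that $\exp_x y$ is defined in $A$ forces the whole radial ray to stay in $A$), so for $\delta$ small enough the perturbed radial geodesics remain admissible as well. The apparent lack of smoothness of $F$ at the origin of $T_xM$ is harmless, since the boundary term at $t=0$ vanishes for purely variational reasons independent of any smoothness issue there.
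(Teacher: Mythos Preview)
Your variational argument is correct and is precisely the standard Finslerian Gauss Lemma proof (as in \cite{BCS}, Chapter~6) specialized to the conic setting, with the admissibility check for $\partial_t\Sigma\in A$ being the only genuinely new ingredient. One small slip: since $c(\tau)=\overline{\exp}_x(\tau y)$ has $F$-speed $r$ while $\Sigma(\cdot,0)$ is $F$-unit speed, one has $\partial_t\Sigma(\tau r,0)=T/r$ rather than $T$; this is harmless because $g_T=g_{T/r}$ by $0$-homogeneity of the fundamental tensor, so the vanishing boundary term still yields $g_T((\exp_{x*})_{\tau y}V,T)=0$.

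The paper does not actually give a proof of this lemma: it simply observes that the statement is a particular case of Remark~3.20 in \cite{JS}, which treats the Gauss Lemma for general conic Finsler metrics. So your proposal supplies what the paper omits. The underlying argument in \cite{JS} is the same variational one you wrote down, so there is no genuine divergence of method, only of presentation.
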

 
{
The proof of the above Lemma is a particular case of  Remark 3.20 in \cite{JS}.
}

 It is a natural question to ask if a Kropina geodesic is an absolute minimum.
 In this section, we give an affirmative answer.
 
 Before proving it, we recall a fundamental property (Lemma 2.6 in \cite{Y}) of Finsler metrics
 \begin{eqnarray*}
 g_y(y, w)\le F(x, y)F(x, w)
 \end{eqnarray*}
 for any $y, w \in A_x$ (see also \cite{JS2}, Remark 2.9).
 Equality holds if and only if $w=ky$ for some $k>0$.
 
 Since we have
 \begin{eqnarray*}
 g_y(y, w)=(g_y)_{ij}y^iw^j=\frac{1}{2}\frac{\partial F^2(x, y)}{\partial y^j}w^j=F(x,y) F_{y^j}(x, y)w^j,
 \end{eqnarray*}
 we obtain
 \begin{eqnarray*}
  F_{y^j}(x, y)w^j\le F(x, w).
 \end{eqnarray*}

 We show the following Theorem.\\
 
 \begin{theorem}
 Let $(M, F: A \longrightarrow \R^+)$ be a Kropina space.
 Let $p\in M$ and $v\in S_{A_p}$.
 Suppose that $\exp_p$ is defined on the $B_{A_p}(r+\epsilon)\subset A_p$ and is diffeomorphism from $B_{A_p}(r+\epsilon)$ onto its image, where $r$ and $\epsilon$ are positive and
 \[ B_{A_p}(r+\epsilon):=\{y\in A_p | F(p, y)<r+\epsilon\}.\]
 
 Then, each radial Kropina geodesic $\overline{\exp}_p(tv)$, $0\le t \le r$, will minimize distance (which is $r$) among all piecewise smooth $F$-admissible  curves in $M$ that share its endpoints.
 \end{theorem}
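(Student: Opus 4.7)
The plan is to adapt the classical Finslerian minimality argument to the conic setting by coupling the Gauss Lemma (Lemma~\ref{Lemma 3.22}) with the pointwise Cauchy--Schwarz-type inequality $g_y(y,w)\le F(x,y)F(x,w)$ recalled just above. Let $\sigma:[0,1]\to M$ be an arbitrary $F$-admissible piecewise $C^\infty$ competitor from $p$ to $q:=\overline{\exp}_p(rv)$; the goal is to prove $\mathcal L_F(\sigma)\ge r$.

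First, I would dispose of competitors that escape the exponential image: if $\sigma$ ever leaves $\overline{\exp}_p\bigl(B_{A_p}(r+\epsilon)\bigr)$, let $t_1\in(0,1]$ be the first time at which $\sigma(t_1)\in\overline{\exp}_p\bigl(S_{A_p}(r+\epsilon/2)\bigr)$; the interior estimate derived below, applied on $[0,t_1]$, would already yield $\mathcal L_F(\sigma|_{[0,t_1]})\ge r+\epsilon/2>r$, so I may assume throughout $\sigma([0,1])\subset\overline{\exp}_p\bigl(B_{A_p}(r+\epsilon)\bigr)$. For each $t$ with $\sigma(t)\ne p$, the diffeomorphism hypothesis licenses the polar-type representation
\[
\sigma(t)=\overline{\exp}_p\bigl(\rho(t)\,u(t)\bigr),\qquad \rho(t)>0,\quad u(t)\in S_{A_p}(1),
\]
with $\rho$ and $u$ piecewise $C^\infty$. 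Differentiating gives $\dot\sigma(t)=(\exp_p)_{*,\rho u}\bigl(\rho'(t)u(t)+\rho(t)\dot u(t)\bigr)$. By Theorem~\ref{Theorem 3.9} the radial geodesic $\tau\mapsto\overline{\exp}_p(\tau u(t))$ is $F$-arclength parametrized, so the radial vector $T:=(\exp_p)_{*,\rho u}(u(t))$ at the footpoint $\sigma(t)$ satisfies $F(\sigma(t),T)=1$ and $g_T(T,T)=1$; since $\dot u(t)$ is tangent to $S_{A_p}(1)$ at $u(t)$ (differentiate $F(p,u(t))=1$ in $t$), the rescaled vector $\rho\dot u$ is tangent to $S_{A_p}(\rho)$ at $\rho u$, so the Gauss Lemma produces $g_T\bigl(T,(\exp_p)_{*,\rho u}(\dot u(t))\bigr)=0$. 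This collapses the velocity decomposition to the scalar identity $g_T(T,\dot\sigma(t))=\rho'(t)$.

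Applying the Cauchy--Schwarz inequality in the form $g_T(T,\dot\sigma)\le F(\sigma,T)F(\sigma,\dot\sigma)=F(\sigma,\dot\sigma)$ then yields $\rho'(t)\le F(\sigma(t),\dot\sigma(t))$ pointwise, and integrating over $[\delta,1]$ and sending $\delta\to 0^+$ gives
\[
\mathcal L_F(\sigma)\ge \rho(1)-\lim_{\delta\to 0^+}\rho(\delta)=r-0=r,
\]
which is the required minimality. The equality case, identifying length-minimizing competitors as reparametrizations of the radial geodesic, can be extracted from the equality condition $w=kT$, $k>0$, in the Cauchy--Schwarz inequality together with $\rho'\ge 0$.

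The step that requires genuine care, and which I regard as the main obstacle, is the conic character of $F$: unlike the Riemannian setting, $\overline{\exp}_p(A_p)$ is not a full neighborhood of $p$ in $M$, so the polar decomposition is a priori valid only on the subset of $[0,1]$ where $\sigma(t)\in\overline{\exp}_p(A_p\setminus\{0\})$. I would need to argue that the $F$-admissibility condition $\dot\sigma(t)\in A_{\sigma(t)}$ forces $\sigma(t)\in\overline{\exp}_p(A_p)$ for $t$ in a neighborhood on the appropriate side of every $t_*$ with $\sigma(t_*)=p$, and to verify that the vanishing boundary contribution $\rho(t_*)=0$ does not spoil the integration. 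For $t_*=0$ this reduces to the openness of $A_p$ in $T_pM$ combined with $\dot\sigma(0)\in A_p$; for interior zeros of $\rho$ one splits the interval and argues on each piece. A minor related bookkeeping concern is ensuring that the decomposition $\sigma=\overline{\exp}_p(\rho u)$ remains piecewise $C^\infty$ across the finitely many corners of $\sigma$, which is automatic because $\overline{\exp}_p$ is a diffeomorphism on $B_{A_p}(r+\epsilon)$.
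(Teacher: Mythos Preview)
Your proposal is correct and follows essentially the same route as the paper: polar decomposition $\sigma(t)=\overline{\exp}_p(\rho(t)u(t))$ of the competitor via the exponential diffeomorphism, the Gauss Lemma to kill the transverse part, the pointwise inequality $g_T(T,\dot\sigma)\le F(\sigma,T)F(\sigma,\dot\sigma)=F(\sigma,\dot\sigma)$ to obtain $\rho'\le F(\sigma,\dot\sigma)$, and integration; the escape case is handled in both arguments by intersecting the curve with an exponential sphere. One small correction: your invocation of Theorem~\ref{Theorem 3.9} to justify that the radial geodesic has $F$-unit speed is misplaced, since that theorem is about \emph{strong} Kropina spaces while the present statement is for general Kropina spaces---the unit-speed property you need follows directly from $F(p,u(t))=1$ and the homogeneity of the exponential map.
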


\begin{proof}
	We will prove this theorem by using ${\exp}_p(tv)$, but the result can be extended to the case of $\overline{\exp}_p(tv)$ as well. Indeed,  notice that the curves emanating from $p$ in the direction tangent to the
	indicatrix are not $F$-admissible curves because its tangent vectors do not belong to $A_p$ (see Definition 4.7 and formula (2.2)).

	
	First, we prove that if $c(u)$, $0\le u \le 1$, is a piecewise smooth $F$-admissible  curve with endpoints $c(0)=p$ and $c(1)=\exp_p(rv)$, and if $c(u)$,  $0\le u \le 1$, lies inside
	$\overline{\exp}_p(B_{A_p}(r)\cup S_{A_p}(r)\cup\{0\})$, then its arclength is at least $r$.

	We must notice that the arclength of the radial Kropina geodesic  $\overline{\exp}_p(tv)$, $0\le t \le r$, is $r$.
	
	Since  $c(u)$, $0\le u \le 1$, lies inside $\overline{\exp}_p( B_{A_p}(r)\cup S_{A_p}(r)\cup \{0\})$, we can express it as
	\begin{eqnarray}\label{3.19}
	c(u)=\overline{\exp}_p(t(u)v(u)),
	\end{eqnarray}
	where $v(u)\in A_p$ and $F(p, v(u))=1$.
	Taking into account that the end points of $c$ are $c(0)=p$ and $c(1)=\overline{\exp}_p(rv)=\exp_p(rv)$, it must follow that
	\begin{eqnarray*}
		t(0)=0, \hspace{0.1in}  t(1)=r,   \hspace{0.1in} v(1)=v.
	\end{eqnarray*}

	We consider the following variation of the Kropina geodesic $\overline{\exp}_p(tv)$:
	\begin{eqnarray*}
		H(u, t):=\overline{\exp}_p(tv(u)),
	\end{eqnarray*}
	where $0\le t \le r$, $0\le u \le 1$, and put
	\begin{eqnarray}
	T_{H(u,t)}:=H_* \frac{\partial}{\partial t}=\frac{\partial  H}{\partial t},  \label{3.20}\\
	U_{H(u, t)}:=H_* \frac{\partial}{\partial u}=\frac{\partial  H}{\partial u}.  \label{3.21}
	\end{eqnarray}
	We can regard as $T_{H(u,t)}\in \pi_A^*(TM)_{(H, T)}$ and $ U_{H(u, t)}\in \pi_A^*(TM)_{(H,T)}$.
	
	For each fixed $u$, $T_{H(u,t)}$ represents the velocity vector of the $F$-unit speed Kropina geodesic $\overline{\exp}_p(tv(u))$.
	
	On the other hand, $U$ is represented by
	\begin{eqnarray*}
		U_{H(u, t)}=\overline{\exp}_{p*}\bigg(t \frac{dv(u)}{du}\bigg),
	\end{eqnarray*}
	where $\frac{dv(u)}{du}$ is tangent to the indicatrix $S_{A_p}$ at $v(u)$.
	
	Then, by the Gauss lemma, we have
	\begin{eqnarray*}
		g_{T}(U, T)=0.
	\end{eqnarray*}

	Returning to the curve $c$, we can rewrite (\ref{3.19})  as
	\begin{eqnarray*}
		c(u)=H(u, t(u)),
	\end{eqnarray*}
	and hence we have
	\begin{eqnarray*}
		\frac{dc}{du}=\frac{\partial H}{\partial u}+\frac{\partial H}{\partial t}\frac{dt}{du}.
	\end{eqnarray*}
	Using (\ref{3.20}) and (\ref{3.21}), we obtain
	\begin{eqnarray*}
		\frac{dc}{du}= U+T\frac{dt}{du}.
	\end{eqnarray*}

	Since we suppose that the curve is a piecewise smooth  $F$-admissible  curve, $\frac{dc}{du}$ must belong to $A_{c(u)}$.
	Applying the fundamental property mentioned above, we have
	\begin{eqnarray*}
		F(c, \frac{dc}{du})=F(c, U+T\frac{dt}{du})   \ge F_{T^i}(c, T)\big(U^i+ T^i\frac{dt}{du}\big),
	\end{eqnarray*}
	that is,
	\begin{eqnarray*}
		F(c, \frac{dc}{du})\ge F_{T^i}(c, T)U^i+ F_{T^i}(c, T)T^i\frac{dt}{du}.
	\end{eqnarray*}
	The first term in the right-hand side is equal to $g_{T}(U, T)=0$ from Gauss Lemma.
	The second term in the right-hand side is equal to $g_{T}(T, T)\frac{dt}{du}=\frac{dt}{du}$ since the radial Kropina  geodesic $\overline{\exp}(tv)$ has a constant $F$-unit speed  $g_{(c, T)}(T, T)=1$.
	Hence, we have the inequality
	\begin{eqnarray*}
		F(c, \frac{dc}{du}) \ge \frac{dt}{du}.
	\end{eqnarray*}
	Therefore, we obtain
	\begin{eqnarray*}
		L(c)=\int_0^1F(c, \frac{dc}{du}) du\ge\int_0^1 \frac{dt}{du}du =t(1)-t(0)=r.
	\end{eqnarray*}
	Thus, we prove that if $c(u)$, $0\le u \le 1$, is a piecewise smooth  $F$-admissible  curve with endpoints $c(0)=p$ and $c(1)=\exp_p(rv)$, and if $c$ lies inside
	$\overline{\exp}_p(B_{A_p}(r)\cup S_{A_p}(r)\cup\{0\})$, then its arclength is at least $r$.
	
	Next, we consider the case that the curve $c(u)$ does not lies necessarily inside    $\overline{\exp}_p(B_{A_p}(r)\cup S_{A_p}(r)\cup\{0\})$.
	
	{
	Let us remark that, since the indicatrix $S_{A_p}(1)$ of the Kropina space $(M,F)$ is obtained by $W$-translating of the unit sphere of the Riemannian space $(M, h)$, the set $S_{A_p}(1)$ is passing through the origin of $T_pM$, and hence  $S_{A_p}(r)$ also has the same property.
		Thus,   using the assumption that     $\exp_p$  is diffeomorphism from $B_{A_p}(r+\epsilon)$ onto its image,          the image   $\overline{\exp}_p( S_{A_p}(r)\cup\{0\})$   of the boundary $S_{A_p}(r)\cup\{0\}$ of  $B_{A_p}(r)$   by $\overline{\exp}_p$ is  closed. 
		Hence if the curve $c(u)$ does not lies necessarily inside    $\overline{\exp}_p(B_{A_p}(r)\cup S_{A_p}(r)\cup\{0\})$,  it necessarily intersects the set     $\overline{\exp}_p(S_{A_p}(r))$.
		Put $c(u_0)\in \overline{\exp}_p(S_{A_p}(r))$.
		
	}

	Then, from the argument above the region $\{c(u)  |  0\le u \le u_0\}$ already has arclength at least $r$.
	
	Thus we conclude that   each radial Kropina geodesic $\overline{\exp}_p(tv)$, $0\le t \le r$, will minimize distance (which is $r$) among all piecewise smooth  $F$-admissible curves in $M$ that share its endpoints.
	
	$\qedd$
	
\end{proof}

 \subsection{Long geodesics}

We have (see \cite{JS2}, Proposition 3.22 for a more general case)

 \begin{theorem}\label{long geod}
Let $p\in M$ be a point on a connected strong Kropina manifold $(M,F)$ with navigation data $(h,W)$, where $h$ is a complete Riemannian metric. If we  assume that $\exp_p$ is defined on the whole $A_p$, then for any $q\in \exp_p(A_p)$, there exists a minimizing unit speed geodesic $\exp_p(ty)$ from $p$ to $q$, for some direction $y\in A_p$.
 \end{theorem}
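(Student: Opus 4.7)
The plan is to reduce the problem to the classical Hopf--Rinow theorem for the complete Riemannian manifold $(M,h)$, exploiting the correspondence between $h$-geodesics and $F$-geodesics induced by the flow $\{\varphi_t\}_{t\in\R}$ of the unit Killing field $W$, as given by Theorem~\ref{Theorem 3.9} together with formula~\eqref{F and h-exponential maps}.

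Concretely, I would begin by noting that $d:=d_F(p,q)$ is finite: since $q=\exp_p(\tilde y)$ for some $\tilde y\in A_p$, the radial Kropina geodesic $t\mapsto\exp_p(t\tilde y/F(\tilde y))$ supplies an admissible path of finite $F$-length from $p$ to $q$. I would then set $q^{-}:=\varphi_{-d}(q)$, and, using the identity $d_F(p,q)=d_h(p,q^{-})$ recorded in Section~4.3, conclude that $d_h(p,q^{-})=d$. The classical Hopf--Rinow theorem applied to the complete Riemannian manifold $(M,h)$ then produces an $h$-unit speed minimizing geodesic $\rho:[0,d]\to M$ with $\rho(0)=p$ and $\rho(d)=q^{-}$.

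Assuming the non-degenerate case $\dot\rho(0)\ne -W_p$ (otherwise Lemma~\ref{integral lines geodesics} forces $\rho(t)=\varphi_{-t}(p)$, so $q^{-}=\varphi_{-d}(p)$ and $q=p$, reducing to the trivial case $d=0$), I would set $y:=W_p+\dot\rho(0)$. Remark~\ref{rem: equiv} yields $F(y)=1$, and $h(y,W_p)=1+h(\dot\rho(0),W_p)>0$ places $y$ in $A_p$. Formula~\eqref{F and h-exponential maps} then gives
\[
\exp_p(ty)=\varphi_t\bigl(e_p(t\dot\rho(0))\bigr)=\varphi_t(\rho(t)),\qquad t\in[0,d],
\]
which is an $F$-unit speed geodesic from $p$ to $\varphi_d(q^{-})=q$ of total $F$-length $d=d_F(p,q)$, hence minimizing.

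The main obstacle is justifying the identity $d_F(p,q)=d_h(p,q^{-})$. The inequality $d_h(p,q^{-})\le d$ should follow by taking a minimizing sequence of $F$-unit speed admissible curves $c_n:[0,L_n]\to M$ with $L_n\to d$ and observing that $\rho_n(t):=\varphi_{-t}(c_n(t))$ is $h$-unit speed from $p$ to $\varphi_{-L_n}(q)$, so that continuity of the flow and of $d_h$ yields the bound. The reverse inequality is subtler: if $\ell:=d_h(p,q^{-})$ were strictly less than $d$, transporting the $h$-minimizer via $\mathcal P(t):=\varphi_t(\rho(t))$ would give an $F$-path of length $\ell$ from $p$ to $\varphi_{\ell-d}(q)$, and concatenating with the flow arc $s\mapsto\varphi_{\ell-d+s}(q)$, $s\in[0,d-\ell]$, whose $F$-length equals $(d-\ell)/2$ since $F(W)=\tfrac{1}{2}$, the $F$-triangle inequality would force $d\le\ell+(d-\ell)/2<d$, a contradiction.
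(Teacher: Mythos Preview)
The paper does not give its own argument here, deferring instead to \cite{JS2}, Proposition~3.22 and \cite{R}, Proposition~7; your reduction to Riemannian Hopf--Rinow via the flow correspondence of Theorem~\ref{Theorem 3.9} is exactly the Robles-style proof and is correct. You even justify the identity $d_F(p,q)=d_h\bigl(p,\varphi_{-d}(q)\bigr)$ that Section~4.3 merely asserts; the only loose end is that your reverse-inequality contradiction must also cover the case $\dot\rho(0)=-W_p$, but then $q^{-}=\varphi_{-\ell}(p)$ forces $q=\varphi_{d-\ell}(p)$, and the flow arc $t\mapsto\varphi_t(p)$ alone already gives $d_F(p,q)\le(d-\ell)/2<d$, the same contradiction.
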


 For the proof see the proof of the more general case \cite{JS2}, Proposition 3.22. Also compare with the proof of the Proposition 7 in \cite{R}. 
 
 \begin{remark}
 Notice that, under assumptions in Theorem \ref{long geod}, the existence of an $F$-geodesic from $p$ to $q$ does not imply in general that there exists an $F$-geodesic from $q$ to $p$ as in the case of a classical Finsler space.
 \end{remark}
 
\begin{remark} 
One could be tempted to formulate some kind of Hopf-Rinow Theorem at this stage. Although this is possible,  since the separation function $d_F$ is not an authentic Finslerian distance function, we prefer not to do this, but to postpone it for few sections.  
\end{remark}

 
 \section{The geodesic connectedness of Kropina manifolds}
 \subsection{The Kropina geodesics of the Euclidean space}\label{subsec: Euclidean space}
We start with a simple example.

  Let $(x^i)$ be a global coordinate system of the Euclidean space  $(\E^n, \langle \cdot, \cdot \rangle)$.
  We consider the navigation data $(h,W)$ on $\E^n$, where $h=(\delta_{ij})$ is the canonical Euclidean metric, and $W=C^i\frac{\partial}{\partial x^i}$ be a unit Killing vector field on  $(\E^n, h)$, that is $C^i$ are constants and  $\sum_{i=1}^n(C^i)^2=1$, i.e. $||W||=1$, where $||\cdot ||$ is the usual Euclidean norm. Since we are using the canonical Euclidean metric, $C_i=\delta_{ij}C^j=C^i$, so we can freely lower the indices.
  
  The flow $\varphi_t:\E^n\to \E^n$ of $W$ is given by
  $\varphi_t(p)=p+Wt$, for $W=(C^1,\dots,C^n)$ and
  any $p=(p^1,\dots,p^n)$, $t\in \R$.

  We will consider  the strong Kropina space $(\E^n, F)$ induced by the navigation data $(h,W)$, that is
  \[
  F(p,y)=\frac{||y||^2}{2\langle W,y\rangle}=\frac{\alpha^2}{\beta},
  \]
  where $h=\langle \cdot,\cdot \rangle$ is the usual Euclidean inner product.
  
  Since an $h$-geodesic $\rho:\R\to \E^n$ emanating from a point
$\rho(0)=p=(p^i)\in \E^n$ with initial direction $\dot{\rho}(0)=a$ is expressed by
  \begin{eqnarray}
     \rho(t)=p+at,
  \end{eqnarray}
  where $a=(a^1,\dots,a^n)$, with $h$-arclength condition  $\sum_{i=1}^n(a^i)^2=1$,
  we obtain the $F$-geodesic of $(\E^n, F)$
  \begin{eqnarray}
  \mathcal P(t)=\varphi(t,\rho(t))=p+(a+W)t.
  \end{eqnarray}
 The non-degeneracy condition $\dot{\mathcal P}(0)\neq 0$ gives $a\ne -W$. One can easily see that indeed, $|\dot{\rho}(t)|_h=1$ if and only if $F(\mathcal P(t),\dot{\mathcal P}(t))=1$.

  \begin{figure}[H]
  	\begin{picture}(200,180)
  	\put(200,10){\vector(0,1){150}}
  	\put(150,60){\vector(1,0){150}}
 
  \put(219, 109){\circle*{2}}
  	\put(210, 100){${\bf p}$}
  		\put(250, 115){${\bf W_p}$}
  		\put(250, 0){${\bf W_p^\perp}$}
 
  	\put(219,109){\vector(3,2){30}}
  	\put(139,109){\circle*{2}}
  	\put(139,109){\vector(3,2){30}}
  	\put(126,109){${\bf q_2}$}
  	\put(290, 150){$\mathcal D_p^+$}
                  \put(170, 150){$\mathcal D_p^-$}
                  \put(279,68.8){\circle*{2}}
                  \put(284,70){${\bf q_1}$}
                  \put(219,109){\vector(3,-2){60}}
                  \put(249,20){\circle*{2}}
                  \put(250,20){${\bf q_3}$}
                  \put(139,109){\vector(1,0){80}}
  	\put(172,10){\line(-1, 3){40}}
  	\put(143, 134){${\bf W_{q_2}}$}
  	\put(160, 0){${\bf W_{q_2}^\perp}$}
  	\put(252,10){\line(-1, 3){50}}
  	\thicklines
  	\put(252,10){\line(-1, 3){50}}
  		\put(172,10){\line(-1, 3){40}}
  	\end{picture}
  	\caption{The Kropina two-dimensional Euclidean space}\label{fig: Euclidean plane}
  \end{figure}
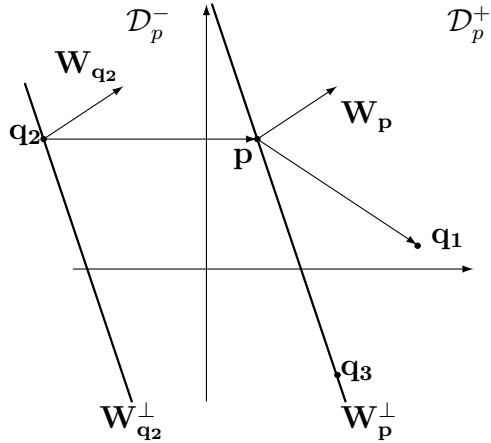%

   First thing to remark about this construction is that, given two unit vectors $a$ and $W$ in $\E^n$, $a\neq -W$, then, taking into account that
   $\dot{\mathcal P}(t)=a+W$ for any $t$, we have
 \[
 \langle\dot{\mathcal P},W\rangle=|\dot{\mathcal P}|\cos\angle ( \dot{\mathcal P},W) >0,
   \]
   that is $\angle(\dot{\mathcal P},W)<\frac{\pi}{2}$.
   Indeed, an elementary computation shows
   \[
    \langle\dot{\mathcal P},W\rangle=\langle a+W,W\rangle=\langle a,W\rangle+1=\cos\angle(a,W)+1>0
   \]
   and
   \[||\dot{\mathcal P}(t)||^2=||a||^2+2\langle a, W \rangle+||W||^2=2+2\langle a, W \rangle>0.\]

   We can summarize the $F$-geodesics behavior as follows (see Figure \ref{fig: Euclidean plane}).
   \begin{enumerate}
   	\item  The strong Kropina space $(\E^n,F)$ is {\it forward geodesically complete}, in the sense that any $F$-geodesic $\gamma:[a,b]\to M$ can be extended to $\gamma:[a,\infty)\to M$.
   	\item  There exists a Kropina geodesic from a given point $p$ to an arbitrary point $q_1\in \E^n$, $p\neq q_1$ if and only if
   	$q_1\in\mathcal D_{p}^+= \{q\in \E^n\ :\ \langle q-p,W \rangle >0\}$, i. e. $q_1$ is in the open half space of $\E^n$, determined by the hyperplane
   	$W^\perp_p:=\{ q\in \E^n\ :\ \langle q-p,W \rangle =0\}$ through $p$, that contains $W$. Hence $d_p:=d_F(p,\cdot):\mathcal D_{p}^+\to [0,\infty)$ is a well defined Lipschitz continuous function.
   	\item  If $q_2\in\mathcal D_{p}^-= \{q\in \E^n\ :\ \langle q-p,W \rangle <0\}$, by a similar construction as above, we can join $q_2$ to $p$ by an $F$-geodesic, but we cannot join $p$ to $q_2$.
   	\item If $ q_3\in W^\perp_p$, then it
                is not possible to join $p$ to $q_3$ nor $q_3$ to $p$ by an $F$-geodesic. In other words, for any $p\in \E^n$ we can summarize:
   	\begin{enumerate}
   		\item $\mathcal D_p^-\cap \mathcal D_p^+=\emptyset$;
   		\item $\mathcal D_p^-\cup \mathcal D_p^+=\E^n\setminus W_p^\perp$.
   	\end{enumerate}
   \end{enumerate}

 As it can be seen from this example, even though the Riemannian metric and the unit Killing vector field $W$ are both complete, it doesn't mean that we can join any two points by an $F$-geodesic.

\subsection{Geodesically connectedness}
The following definition is now natural.

\begin{definition}
Let $(M,F)$ be a strong Kropina manifold. A subset $D\subset M$ of $M$ is called {\it $F$-admissible pathwise connected} 
{({\it resp. $F$-geodesically connected})},
 or simply {\it $F$-connected}, if for any pair of points $p,q\in D$, there exists an $F$-admissible piecewise $C^\infty$-curve 
{({\it $F$-geodesic})}
joining $p$ and $q$. 
\end{definition}
 
 Observe that, due to the nature of Kropina merics, in the case of connectedness we may actually have a geodesic from $p$ to $q$ or one from $q$ to $p$, but not neccesarily both. Such kind of connected space could be called {\it weak geodesically connected} (this definition was suggested by the referee).

 The Hopf-Rinow Theorem for classical Finsler manifolds says that if $(M,F)$ is geodesically complete, then there exists an $F$-geodesic between any two points, that is $(M,F)$ is geodesically connected. Obviously this is not true any more for Kropina metrics as the example above shows.

This leads us to the fundamental question: {\it are there any geodesically connected Kropina spaces? }
 
 In the following we will study the characterization of such spaces.
 
 \begin{proposition}\label{prop: fix point characterization}
 Let $(M,F)$ be a strong Kropina space with the navigation data $(h,W)$, 
{where $h$, and hence $W$, are complete,}
and let $p,q\in M$ be two different points on $M$.
 Then, the followings are equivalent
 \begin{enumerate}[(i)]
   \item There exists a 
{minimizing } Kropina geodesic from $p$ to $q$.
   \item The mapping $\delta:[0,\infty)\to (0,\infty)$, $\delta(\tau):=d^h_p\circ \varphi_{-\tau}(q)$ has a fixed point, where $d^h_p:M\to [0,\infty)$, $d^h_p(q)=d^h(p,q)$ is the Riemannian distance function from $p$, and $\varphi_t(q)$ is the flow of $W$ through $q$ parametrized such that
   $\varphi_0(q)=q$.
  \end{enumerate}
 \end{proposition}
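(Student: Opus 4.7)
The plan is to exploit the bijective correspondence from Theorem~\ref{Theorem 3.9} between $F$-unit speed geodesics $\mathcal{P}$ starting at $p$ and $h$-unit speed geodesics $\rho$ starting at $p$, via $\rho(t)=\varphi_{-t}(\mathcal{P}(t))$ and $\mathcal{P}(t)=\varphi_t(\rho(t))$. Under this correspondence $\mathcal{P}(\ell)=q$ is equivalent to $\rho(\ell)=\varphi_{-\ell}(q)$, and both have the same length $\ell$. Thus an $F$-geodesic of length $\ell$ from $p$ to $q$ corresponds to an $h$-geodesic of $h$-length exactly $\ell$ from $p$ to $\varphi_{-\ell}(q)$, which forces $\delta(\ell)\leq\ell$; equality is what we need.

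For $(ii)\Rightarrow(i)$, suppose $\delta(\tau_0)=\tau_0$ with $\tau_0>0$. Then $d^h(p,\varphi_{-\tau_0}(q))=\tau_0$, and the Hopf--Rinow theorem for the complete Riemannian manifold $(M,h)$ furnishes a minimizing $h$-unit speed geodesic $\rho\colon[0,\tau_0]\to M$ joining $p$ to $\varphi_{-\tau_0}(q)$. The initial velocity $\dot\rho(0)$ cannot equal $-W_p$: otherwise $\rho$ is the reverse $W$-flowline through $p$, forcing $\varphi_{-\tau_0}(p)=\rho(\tau_0)=\varphi_{-\tau_0}(q)$ and hence $p=q$, against hypothesis. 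Theorem~\ref{Theorem 3.9} then delivers $\mathcal{P}(t):=\varphi_t(\rho(t))$ as an $F$-geodesic from $p$ to $q$ of $F$-length $\tau_0$. Its minimality follows from $(i)\Rightarrow(ii)$ applied to the smallest fixed point of $\delta$, which by continuity of $\delta$ and $\delta(0)=d^h(p,q)>0$ coincides with $d_F(p,q)$.

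For $(i)\Rightarrow(ii)$, let $\mathcal{P}\colon[0,\ell]\to M$ be a minimizing $F$-geodesic of $F$-length $\ell=d_F(p,q)$. Theorem~\ref{Theorem 3.9} converts it into an $h$-unit speed geodesic $\rho$ of $h$-length $\ell$ from $p$ to $\varphi_{-\ell}(q)$, giving $\delta(\ell)\leq\ell$. To exclude $\delta(\ell)=\ell'<\ell$, I would use Hopf--Rinow to pick an $h$-minimizer $\rho'$ of length $\ell'$ from $p$ to $\varphi_{-\ell}(q)$; Theorem~\ref{Theorem 3.9} turns it into an $F$-geodesic from $p$ to $\varphi_{\ell'-\ell}(q)=\varphi_{-(\ell-\ell')}(q)$ of $F$-length $\ell'$. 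Appending the $W$-flow segment $s\mapsto\varphi_s(\varphi_{-(\ell-\ell')}(q))$, $s\in[0,\ell-\ell']$, which is $F$-admissible since $h(W,W)=1>0$ and has $F$-length $(\ell-\ell')/2$ because $F(x,W)=|W|_h^{2}/(2h(W,W))=1/2$, produces a piecewise $F$-admissible path from $p$ to $q$ of total $F$-length $(\ell+\ell')/2<\ell$, contradicting $\ell=d_F(p,q)$. Hence $\delta(\ell)=\ell$.

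The main obstacle is the $(i)\Rightarrow(ii)$ direction: one must splice together a Theorem~\ref{Theorem 3.9}-type $F$-geodesic with a piece of the $W$-flow and compute its $F$-length correctly via $F(x,W)=\tfrac{1}{2}$. A subsidiary check is the non-degeneracy $\dot\rho'(0)\neq -W_p$; in the edge case $\dot\rho'(0)=-W_p$ one has $q=\varphi_{\ell-\ell'}(p)$ already on the $W$-orbit of $p$, and then the $W$-flowline itself is an $F$-admissible curve from $p$ to $q$ of $F$-length $(\ell-\ell')/2<\ell$, which already gives the required contradiction without invoking the splicing argument.
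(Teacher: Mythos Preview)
Your $(ii)\Rightarrow(i)$ direction matches the paper's proof almost verbatim: both use Hopf--Rinow on $(M,h)$ to obtain a minimizing $h$-geodesic $\rho$ from $p$ to $\varphi_{-\tau_0}(q)$, verify $\dot\rho(0)\neq -W_p$ via the same contradiction $p=q$, and then invoke Theorem~\ref{Theorem 3.9}. (Your remark about minimality of the resulting $F$-geodesic via the smallest fixed point is a bit elliptic; the paper does not address this point at all and simply produces an $F$-geodesic.)

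Your $(i)\Rightarrow(ii)$ direction is genuinely more complete than the paper's. The paper converts the minimizing $F$-geodesic $\mathcal P$ of length $l$ into the $h$-geodesic $\rho$ from $p$ to $q^-=\varphi_{-l}(q)$ and then simply \emph{asserts} $\delta(l)=l$, tacitly assuming that $F$-minimality of $\mathcal P$ forces $h$-minimality of $\rho$ (a fact only stated, without proof, later in Section~\ref{sec: Cut points}). You actually supply the missing argument: if $\delta(l)=\ell'<l$, you take an $h$-minimizer $\rho'$ of length $\ell'$, convert it to an $F$-geodesic ending at $\varphi_{\ell'-l}(q)$, and append the $W$-flowline (using $F(x,W)=\tfrac12$) to reach $q$ with total $F$-length $(l+\ell')/2<l$, contradicting minimality. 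The edge case $\dot\rho'(0)=-W_p$ is handled cleanly. This splicing idea is not in the paper and is what makes your implication rigorous rather than asserted.
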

 \begin{proof}
 
 (i)$\Rightarrow$ (ii)

\bigskip

\setlength{\unitlength}{1cm}
  \begin{figure}[h]
    \begin{center}
\begin{picture}(6, 4)

\qbezier(-1,4.5)(3,3)(7,4.5)
\put(1.5,3.85){\circle*{0.1}}
\put(4.3,3.85){\circle*{0.1}}
\put(2,0.7){\circle*{0.1}}
\qbezier(2,0.7)(1.2,3)(1.5,3.85)
\qbezier(2,0.7)(3,3)(4.3,3.85)
\put(1.5,4.1){$q^-$}
\put(4.3,4.1){$q$}
\put(2,0.2){$p$}
\put(-1,4.1){$\varphi_q$}
\put(3,3.75){\vector(1,0){0.1}}
\put(3.35,3){\vector(1,1){0.1}}
\put(1.43,3){\vector(0,1){0.1}}
 \put(3.7,3){$\mathcal P$}
 \put(1,3){$\rho$}
 \end{picture}
\caption{Constructing an $F$-geodesic from $p$ to $q$ via $q^{-1}=\varphi_{-l}(q)=\rho(l)$.}\label{fig: Constructing an $F$-geodesic}
    \end{center}
  \end{figure}
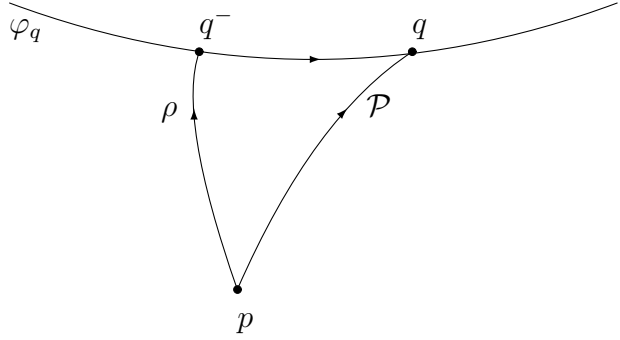

 Let points $p$ and $q$ be as in hypothesis and assume there exists a minimal $F$-unit speed Kropina geodesic $\mathcal P:[0,l]\to M$ from $p$ to $q$, that is
 $\mathcal P(0)=p$, $\mathcal P(l)=q.$
 
 Then, from Theorem \ref{Theorem 3.9} it follows that there exists an $h$-unit speed Riemannian geodesic $\rho:[0,l]\to M$, from $p$, that is $\rho(0)=\mathcal P(0)=p$ given by $\rho(t)=\varphi_{-t}(\mathcal P(t))$. Since we can use the same parameter on both geodesics $\rho$ and
 $\mathcal P$ (see Remark \ref{rem: equiv}), we can easily see that $q=\mathcal P(l)=\varphi_l(\rho(l))$ implies the existence of a point $q^-:=\varphi_{-l}(q)$ and hence
 \[
 \rho(l)=q^-=\varphi_{-l}(\mathcal P(l))=\varphi_{-l}( q),
 \]
 that is $\delta(l)=l$ and we have obtained a fixed point for $\delta$.
 
 (ii)$\Rightarrow$ (i)
 
 Conversely, we assume that there exists $\tau_0>0$ such that $\delta(\tau_0)=\tau_0$. Observe that $\delta(0)=d^h(p,q)>0$.
 
 In other words, there exists a point $q^-:=\varphi_{-\tau_0}(q)$ on the integral curve of $W$ through $q$. We consider the $h$-unit length geodesic $\rho:[0,\tau_0]\to M$ from $p=\rho(0)$ to $q^-=\rho(\tau_0)$.
 
 Then $\mathcal{P}(t):=\varphi_t(\rho(t))$ is an $F$-unit speed Kropina geodesic from $p$ to $q$ (see Figure \ref{fig: Constructing an $F$-geodesic}). Observe that $\dot{\mathcal{P}}(0)=\dot{\rho}(0)+W_p$.
 
 The only thing we have to prove is the $F$-admissibility of this $\mathcal{P}$, that is $\dot{\rho}(0)\neq -W_p$. 
 It is enough to verify this at $t=0$ (see Lemma \ref{lem: non-degenerate curve}), i.e. $h(\dot{\mathcal{P}}(0),W_p)>0$.

 Indeed, if we assume $\dot{\rho}(0)= -W_p$, then it follows that $\rho$ is an integral curve of $-W$, or, equivalently, an integral curve of $W$, in other words, $p$ must be on the integral curve of $W$ through $q^-$.
By construction we have $\varphi_{\tau_0}(q^-)=p$.
On the other hand, from  $q^-:=\varphi_{-\tau_0}(q)$ we have $\varphi_{\tau_0}(q^-)=q$.
It follows that $p=q$.

This is not possible by hypothesis, so we have got a contradiction, hence $\mathcal{P}$ is $F$-admissible.
 
The fact that $\mathcal{P}$ is $F$-unit speed geodesic follows from construction.
 $\qedd$
 \end{proof}
 
 We observe that there are points on $M$ that cannot be joined with $F$-geodesics.

 Here is our main result of this section
 
 \begin{theorem}\label{thm: quasi-regular complete}
 Let $(M,F)$ be a forward complete strong Kropina space with the navigation data $(h,W)$, where {$h$ is complete}
 and $W$ is a
quasi-regular unit Killing vector field.
 
 Then, we can join a point $p$ to any point $q$ by a minimizing $F$-unit speed Kropina geodesic.

 \end{theorem}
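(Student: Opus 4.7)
The plan is to invoke Proposition \ref{prop: fix point characterization}, so that it suffices to produce a fixed point of the continuous map $\delta(\tau)=d^h(p,\varphi_{-\tau}(q))$, and moreover to pick the fixed point carefully enough that the associated Kropina geodesic is globally minimizing. I would dispose of the trivial case $p=q$ at the outset and assume $p\neq q$, so that $\delta(0)=d^h(p,q)>0$.

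The first step is to exploit quasi-regularity of $W$. Since every integral curve of $W$ is closed, the orbit of $q$ is a compact embedded circle of some positive $h$-length $T_q$; in particular $\varphi_{-\tau-T_q}(q)=\varphi_{-\tau}(q)$ for all $\tau\geq 0$. Combined with the smoothness of the complete Killing flow and the continuity of the Riemannian distance function (ensured by completeness of $h$), this makes $\delta:[0,\infty)\to[0,\infty)$ continuous and $T_q$-periodic, hence bounded above by $D:=\max_{\tau\in[0,T_q]}\delta(\tau)$. Consequently the continuous function $f(\tau):=\delta(\tau)-\tau$ satisfies $f(0)>0$ and $f(\tau)<0$ for $\tau>D$, so by the intermediate value theorem the set $\{\tau\geq 0:\delta(\tau)=\tau\}$ is nonempty. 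Taking $\tau^\ast$ to be its infimum yields, by continuity of $f$, an honest fixed point of $\delta$ with $\tau^\ast\in(0,D]$, and Proposition \ref{prop: fix point characterization} then produces an $F$-unit speed Kropina geodesic $\mathcal P(t)=\varphi_t(\rho(t))$, $t\in[0,\tau^\ast]$, joining $p$ to $q$; the construction is legitimate because $h$-completeness supplies the Riemannian geodesic $\rho$ from $p$ to $\varphi_{-\tau^\ast}(q)$ and the Killing flow $\varphi$ is complete.

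Minimality of $\mathcal P$ follows by a transformation argument. For any $F$-admissible curve $c:[0,T]\to M$ with $F(c,\dot c)=1$ from $p$ to $q$, the curve $\rho(t):=\varphi_{-t}(c(t))$ is $h$-unit speed from $p$ to $\varphi_{-T}(q)$, since each $\varphi_{-t}$ is an $h$-isometry and $F(c,\dot c)=1$ is equivalent to $|\dot c-W|_h=1$ by Remark \ref{rem: equiv}. Hence $T=\mathcal L_h(\rho)\geq d^h(p,\varphi_{-T}(q))=\delta(T)$, so every attainable admissible $F$-length $T$ satisfies $\delta(T)\leq T$; since $\delta(\tau)>\tau$ on $[0,\tau^\ast)$, this forces $T\geq \tau^\ast$, proving $d_F(p,q)=\tau^\ast$ and the minimality of $\mathcal P$. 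The forward completeness of $F$ enters only to ensure $\mathcal P$ extends as a genuine $F$-geodesic over the whole interval $[0,\tau^\ast]$. The main obstacle is recognising that quasi-regularity — and not merely $h$-completeness of $W$ — is exactly what bounds $\delta$ and thereby forces $f$ to change sign; without closed orbits, as the Euclidean example of Section \ref{subsec: Euclidean space} shows, $\delta$ may grow linearly in $\tau$ and no fixed point need exist.
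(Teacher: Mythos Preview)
Your proof is correct and shares the paper's core mechanism: quasi-regularity makes $\delta$ periodic, hence bounded, so $\delta(\tau)-\tau$ changes sign and the intermediate value theorem yields a fixed point, after which Proposition \ref{prop: fix point characterization} applies. The paper phrases the IVT step slightly differently --- it picks a multiple $n$ of the period with $\delta(0)<nT_q$ and works on $[0,nT_q]$ --- but this is equivalent to your boundedness formulation.

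Where you go beyond the paper is in the minimality argument. The paper simply invokes Proposition \ref{prop: fix point characterization}, whose (ii)$\Rightarrow$(i) direction builds an $F$-geodesic from an arbitrary fixed point of $\delta$ but does not explicitly verify that it is minimizing. Your choice of the \emph{smallest} fixed point $\tau^\ast$, together with the observation that any $F$-unit-speed admissible curve $c:[0,T]\to M$ from $p$ to $q$ yields, via $t\mapsto\varphi_{-t}(c(t))$, an $h$-unit-speed curve of length $T$ from $p$ to $\varphi_{-T}(q)$ (so that $\delta(T)\leq T$ and hence $T\geq\tau^\ast$), gives a clean self-contained proof that $d_F(p,q)=\tau^\ast$. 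This fills in a step the paper leaves implicit.
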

 \begin{proof}
 	
 By hypothesis, all integral curves of $W$ are simple closed geodesics, even though they might not be of same length (see Definition \ref{def: quasi-regular}). In any case, the mapping $\delta$ defined in Proposition \ref{prop: fix point characterization} is defined on the compact set $[0,2\pi c]$, where $c:=\frac{1}{2\pi}\mathcal L_h(\varphi_t(q))$, if $l_0=d^h(p,q)$ is less than the $h$-length of the closed integral line of $W$ through $q$, or on $[0,2n\pi c]$ for some positive integer $n$ chosen such that $l_0$ is less than the $h$-length of $\varphi([0,2n\pi c],q)$. This is always possible by passing to the universal cover of $M$, for example.
 
 Now, observe that in our hypotheses, $(M,h)$ is complete Riemannian manifold.
 
 For the sake of simplicity we assume $n=1$,  that is $l_0<2\pi c$. From $l_0=\delta(0)=\delta(2\pi c)$ it follows $0<\delta(0)=\delta(2\pi c)<2\pi c$. Since $\delta$ is continuous, it must exists a $\tau_0$ such that $\delta(\tau_0)=\tau_0$ , hence there exists an $F$-geodesic from $p$ to $q$ due to Proposition \ref{prop: fix point characterization} (see Figure \ref{fig: The fixed point tau zero}).

 \setlength{\unitlength}{1cm}
 \begin{figure}[h]
 	\begin{center}
 		\begin{picture}(6,5)
 		\put(-3.5,1){\vector(1,0){6}}
        \put(-3,0.5){\vector(0,1){4.5}}
        \thinlines
         \put(-3,4.5){\line(1,0){3.5}}
            \put(0.5,1){\line(0,1){3.5}}
            \put(-3,1){\line(1,1){3.5}}
            \put(-3,2.5){\line(1,0){3.5}}
             \put(-4,4.5){$2\pi c$}
             \put(0.5,0.5){$2\pi c$}
              \put(2.5,0.5){$\tau$}
              \put(-4.6,2.5){$l_0=\delta(0)$}
              \put(0.7,2.5){$\delta(2\pi)$}
              \put(-0.85,1){\line(0,1){2.15}}
              \put(-0.85,0.5){$\tau_0$}
              \put(-1,4){$\delta$}
              \thicklines
              \qbezier(-3,2.5)(-1.3,5)(-0.6,2.5)
              \qbezier(-0.6,2.5)(0,0.6)(0.5,2.5)
 	\end{picture}
 	\caption{The fixed point $\tau_0$ of $\delta$.}\label{fig: The fixed point tau zero}
 \end{center}
\end{figure}
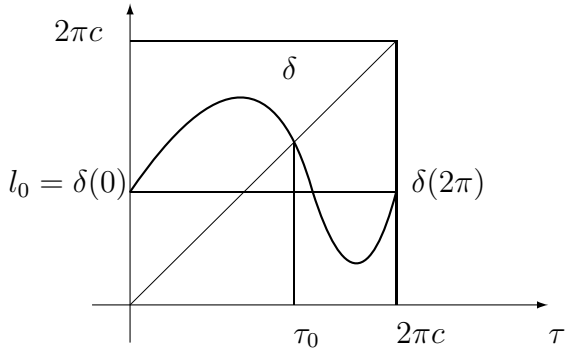

 $\qedd$
 
 \end{proof}

\begin{corollary}
If $(M,F)$ is a forward complete strong Kropina space with navigation data $(h,W)$, where
{$h$ is complete}
 and $W$ is a 
 quasi-regular unit Killing vector field, then the separation function $d_F$ is a non-symmetric distance function on $M$, and hence $(M,d_F)$ is a non-symmetric metric space. 
\end{corollary}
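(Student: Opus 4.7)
The plan is to verify that the three surviving axioms of a distance function (finiteness, positive-definiteness, triangle inequality) hold for $d_F$ under the stated hypotheses; the symmetry axiom is the one we are explicitly relinquishing. The essential content is bookkeeping on top of Theorem \ref{thm: quasi-regular complete} and the Riemannian lower bound noted earlier.

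First, for finiteness, I would invoke Theorem \ref{thm: quasi-regular complete} directly: since $W$ is a quasi-regular unit Killing field and $F$ is forward complete with $h$ complete, any two points $p,q\in M$ can be joined by a minimizing $F$-unit speed Kropina geodesic, and hence $\Gamma_{pq}^F\neq\emptyset$. Consequently the ``otherwise'' branch in Definition \ref{F-separation} never triggers, so $d_F(p,q)<\infty$ for every pair $p,q\in M$.

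Next, for positive-definiteness, the Riemannian lower bound \eqref{lower bounded F}, $F(y)\ge \tfrac{1}{2}\|y\|_h$, gives $\mathcal L_F(\gamma)\ge \tfrac{1}{2}\mathcal L_h(\gamma)$ for every $F$-admissible piecewise $C^\infty$ curve $\gamma$. Taking the infimum over $\Gamma_{pq}^F$ yields $d_F(p,q)\ge \tfrac{1}{2}d_h(p,q)$. Because $d_h$ is an honest Riemannian distance on the connected Riemannian manifold $(M,h)$, this shows $d_F(p,q)>0$ whenever $p\ne q$, while $d_F(p,p)=0$ by definition. For the triangle inequality, given $p,q,r\in M$ and any $c_1\in \Gamma_{pq}^F$, $c_2\in \Gamma_{qr}^F$, the concatenation $c_1\ast c_2$ is again an $F$-admissible piecewise $C^\infty$ curve (each tangent vector separately lies in the appropriate conic domain $A$), so it belongs to $\Gamma_{pr}^F$ and has $F$-length $\mathcal L_F(c_1)+\mathcal L_F(c_2)$. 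Taking infima in the two factors gives $d_F(p,r)\le d_F(p,q)+d_F(q,r)$.

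Finally, to justify the qualifier \emph{non-symmetric}, I would simply appeal to the two-dimensional Euclidean example in \S\ref{subsec: Euclidean space}: the picture of $\mathcal D_p^{\pm}$ and the polar formula $F(u)=\tfrac{1}{2\cos\theta}$ show that, even when both $d_F(p,q)$ and $d_F(q,p)$ are finite, travelling against the wind costs strictly more than travelling with it, so $d_F$ need not be symmetric. The main (very mild) obstacle is the definiteness step: a priori the conic character of $F$ could have allowed $d_F(p,q)=0$ for $p\ne q$, and it is precisely the Riemannian lower bound \eqref{lower bounded F} that rules this out. Everything else is immediate from Theorem \ref{thm: quasi-regular complete} and the definitions.
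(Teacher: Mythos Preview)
Your proof is correct and follows essentially the same approach as the paper's (very terse) argument: invoke Theorem~\ref{thm: quasi-regular complete} to guarantee $\Gamma_{pq}^F\neq\emptyset$ for all pairs, then verify positivity, definiteness, and the triangle inequality. You supply considerably more detail than the paper does, in particular making explicit the role of the Riemannian lower bound \eqref{lower bounded F} for definiteness, which the paper leaves implicit under ``naturally follow.''

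One minor imprecision: the Euclidean example of \S\ref{subsec: Euclidean space} that you cite to illustrate failure of symmetry does \emph{not} satisfy the hypotheses of the corollary, since the wind $W=C$ there has straight-line integral curves and is therefore not quasi-regular. This does not damage the proof---``non-symmetric distance'' here is a structural term (a quasi-metric), not an assertion that symmetry genuinely fails---but if you want an example within the stated hypotheses, the odd-dimensional sphere of \S\ref{Subsubsection 8.1.3} (where $W$ is quasi-regular and $d_F(p,q)\neq d_F(q,p)$ for generic pairs) would be the right reference.
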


{
Indeed, since for any points $p$, $q\in M$ there is a minimizing $F$-unit speed geodesic from $p$ to $q$ from Theorem \ref{thm: quasi-regular complete}, $d_F(p, q)$ can be defined as usual.
The {\it positivity}, {\it positive definiteness} and {\it triangle inequality} of $d_F$ naturally follow. Clearly, $d_F$ doesn't need to be symmetric.

}

Under the hypotheses of the Corollary above, since $(M,d_F)$ is a metric space, we can define the notions of convergent sequences, forward (backward) Cauchy sequences of points on $M$ as usual (see for instance \cite{BCS}). Such a metric space $(M,d_F)$ will be called forward (backward) complete if every forward (backward) Cauchy sequence is convergent, respectively.

A Hopf-Rinow Theorem for strong Kropina spaces can be now formulated (see \cite{JS2}, Theorem 3.22 for a more general case).

 \begin{theorem}\label{Finslerian HR theorem}
 	Let $(M,F:A\to \R^+)$ be a connected strong Kropina manifold with navigation data $(h,W)$, where $W$ is a quasi-regular unit Killing vector field. Then the followings are equivalent
 	\begin{enumerate}[(i)]
 		\item The non-symmetric metric space $(M,d_F)$ is forward complete (as metric space).                                          
 		\item The Kropina manifold $(M,F)$ is forward geodesically complete.
 		\item For any $p\in M$ the exponential map $\exp_p$ is defined on the whole $A_p\subset T_pM$.        
 	       \item Any closed and forward bounded subset in  $(M,d_F)$ is compact.
 	\end{enumerate}
 	Moreover, if one of the above holds, then for any pair of points in $M$ can be joined by a minimizing geodesic.
 \end{theorem}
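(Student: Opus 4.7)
The plan is to establish the cyclic chain $(i)\Rightarrow (ii)\Rightarrow (iii)\Rightarrow (iv)\Rightarrow (i)$ and then deduce the final clause as a direct consequence of $(iii)$ together with Theorem \ref{thm: quasi-regular complete}. Of these four implications, $(ii)\Rightarrow (iii)$ is nothing more than the very definition of the exponential map, while $(iv)\Rightarrow (i)$ is the standard metric-space argument: any forward $d_F$-Cauchy sequence is forward bounded, hence contained in a closed forward $d_F$-ball, which is compact by $(iv)$; this yields a convergent subsequence, and the Cauchy property then forces convergence of the whole sequence.

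For $(i)\Rightarrow (ii)$, I would take an $F$-arclength parametrized geodesic $\mathcal P:[0,a)\to M$ with $a<\infty$ maximal, and pick $t_n\uparrow a$. Since the $F$-length of $\mathcal P|_{[t_m,t_n]}$ equals $t_n-t_m$, one has $d_F(\mathcal P(t_m),\mathcal P(t_n))\leq t_n-t_m$ for $m\leq n$, so $\{\mathcal P(t_n)\}$ is forward $d_F$-Cauchy and, by $(i)$, converges to some $p_\infty\in M$. Local existence of $F$-geodesics near $p_\infty$, granted by Theorem \ref{Theorem 3.9} via the associated $h$-geodesic ODE (with non-degenerate initial direction guaranteed by Lemma \ref{lem: non-degenerate curve}), then permits a prolongation of $\mathcal P$ past $a$, contradicting maximality.

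The substantive step is $(iii)\Rightarrow (iv)$, where the quasi-regularity of $W$ interacts with the conic geometry. Given a closed, forward $d_F$-bounded set $K\subset M$, write $K\subset\overline{\mathcal{B}^+_F(p,r)}$ for some $p\in M$ and $r>0$. Under $(iii)$ the hypotheses of Theorem \ref{thm: quasi-regular complete} are in force, so every $q\in\overline{\mathcal{B}^+_F(p,r)}$ is reached from $p$ by a minimizing $F$-geodesic, whence $\overline{\mathcal{B}^+_F(p,r)}\subset \overline{\exp}_p\bigl(\{y\in A_p:F(p,y)\leq r\}\cup\{0_p\}\bigr)$. Using the navigation identity $F(p,y)=|y|_h^2/(2h(y,W))$, one checks that the tangent set on the right is nothing but the closed $h$-ball of $h$-radius $r$ centered at $rW_p$, which is compact in $T_pM$; continuity of $\overline{\exp}_p$ on this set, granted by $(iii)$, produces a compact image, and $K$ is compact as a closed subset thereof.

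The main obstacle I anticipate lies precisely in this last step: the Kropina tangent ball $\{y\in A_p:F(p,y)\leq r\}$ is not closed in $T_pM$, and one must verify that adjoining the single point $0_p$ produces a genuine compact $h$-ball while $\overline{\exp}_p$ extends continuously through $0_p\mapsto p$, so that the compactness transfer from $T_pM$ to $M$ is valid. Once this is in place the moreover clause is immediate, as $(iii)$ together with quasi-regularity is exactly the hypothesis of Theorem \ref{thm: quasi-regular complete}, which supplies a minimizing $F$-geodesic between any prescribed pair of points.
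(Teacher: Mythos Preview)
Your proposal is correct and follows essentially the same cyclic chain $(i)\Rightarrow(ii)\Rightarrow(iii)\Rightarrow(iv)\Rightarrow(i)$ as the paper, with the same key ingredients at each step; in particular, both proofs invoke Theorem \ref{thm: quasi-regular complete} inside $(iii)\Rightarrow(iv)$ to obtain a minimizing geodesic from $p$ to each $q$ in the bounded set. Your explicit identification of $\{y\in A_p:F(p,y)\leq r\}\cup\{0_p\}$ with the closed $h$-ball of radius $r$ centered at $rW_p$ is a useful refinement: the paper simply asserts compactness of $B_p^F(r)\cup S_p(r)$ without comment, whereas your computation $|y|_h^2\leq 2r\,h(y,W)\Leftrightarrow |y-rW_p|_h\leq r$ makes the compactness transparent and cleanly handles the boundary point $0_p$ that you flagged as the potential obstacle.
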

 	\begin{proof}
 		First, suppose that (i) holds.
 		Let $\mathcal P(t)$, $a\leqq t <b$, be a maximally forward extended $F$-unit speed geodesic.
 		We will show that $b=\infty$.
 		So, we suppose that $b\ne \infty$.
 		Then we have an increasing sequence $\{t_i\}$ in $[a, b)$ converging to $b$.
 		So, there exists an integer $N$ such that
 		\[    N<i<j  \Longrightarrow   t_j-t_i<\epsilon\]
 		for any $\epsilon>0$.
 		
 		Since $\mathcal P(t)$ is an $F$-unit speed geodesic, we have
 		\[ i<j  \Longrightarrow  d_F(\mathcal P(t_i), \mathcal P(t_j)) <t_j-t_i.\]
 		
 		From the above two properties, it follows that there exists a  positive integer $N$ such that for any $\epsilon>0$
 		\[    N<i<j  \Longrightarrow                    d_F(\mathcal P(t_i), \mathcal P(t_j))<\epsilon,\]
 		that is, $\{\mathcal P(t_i)\}$ is a forward Cauchy sequence.
 		Then, from (i), it follows that $\{\mathcal P(t_i)\}$ converges to $q\in M$.
 		Define $\mathcal P(b)=q$.
 		From the theory of ordinary differential equations, it follows that $\mathcal P(t)$ is defined on a neighborhood of $t=b$.
 		This is a contradiction with the properties of $b$.
 		Hence it follows that $b=\infty$, that is, (ii) follows.
 		
 		Secondly, suppose that (ii) holds.
 		Then (iii)  automatically follows.
 		
 		Thirdly, suppose that (iii) holds.
 		Let $\mathcal{N}$ be a closed and forward bounded subset of $(M,d_F)$.
 		Since $\mathcal{N}$ is a forward bounded subset, there exists a point $p\in M$ and $r>0$ such that
 		\[     \mathcal{N} \subset                  \mathcal{B}_F^+(p,r).\]
 		For each $q\in \mathcal{N}$, there exists  $v_q\in A_p$ such that $\exp_p{(tv_q)}$, $0<t\leqq 1$, is a geodesic from $p$ to $q$.
 		The collection of all $v_q$ is a subset $N$ of $A_p$.
 		This subset is bounded because $F(p, v_p)=d_F(p, q)<r$, that is, $N$ is contained in the compact set  $B_p^F(r)\cup S_p(r)$. 
 		Hence $\mathcal{N}=\exp_p{N}$ is a closed set contained in the compact set 
 		$\exp_p{\big( B_p^F(r)\cup S_p(r)}$). 
 		Therefore,  $\mathcal{N}$ is a compact set and $(iv)$ follows.
 		
 		Lastly, suppose that (iv) holds.
 		Let $\{p_i\}$ be a forward Cauchy sequence in $(M,d_F)$.
 		Then there exists a positive integer $N$ such that for any $\epsilon>0$
 		\[N<i<j  \Longrightarrow                    d_F(p_i, p_j)<\epsilon.\]
 		So, for any  $i(>N)$ the inequality
 		\[ d_F(p_1, p_i)\leqq d_F(p_1, p_{N+1})+d_F(p_{N+1}, p_i)< d_F(p_1, p_{N+1})+\epsilon\]
 		holds, that is, the Cauchy sequence $\{p_i\}$ is forward bounded.
 		This closure, denoted by  $\mathcal{A}$, is also forward bounded and of course a closed set.
 		Hence, from the assumption $(iv)$  $\mathcal{A}$ is compact.
 		
 		We will show that the sequence $\{p_i\}$ contains a convergent subsequence.
 		To do so, we suppose that it has no convergent subsequence.
 		Then for each $p\in \mathcal{A}$ there exist a point $\overline{p}\in \mathcal{D}^+$ and a small forward metric ball  $\mathcal{B}^+_F(\overline{p},r_p)\ni p$ which contains no point of $\{p_i\}$ except $p$.
 		These forward balls are open set and cover $\mathcal{A}$.
 		Since $\mathcal{A}$ is compact, the finite number of them covers $\mathcal{A}$.
 		Each  of the finite sub-covering  can contain at most one point of the sequence $\{p_i\}$.
 		This means that the point set of the sequence is finite.
 		There is at least one convergent subsequence, and this is a contradiction.
 		Therefore, it follows that the sequence $\{p_i\}$ contains a convergent subsequence.
 		
 		Let denote the above convergent  subsequence as $\{p_\alpha\}$.
 		Say it converges to some $q\in \mathcal{A}$.
 		
 		We will show that the sequence $\{p_i\}$ converges to $q$.
 		Since the sequence $\{p_i\}$ is a forward Cauchy sequence, there exists a positive integer $N_1$ such that
 		\[    N_1<i<j  \Longrightarrow                    d_F(p_i, p_j)<\frac{\epsilon}{2}.\]
 		Furthermore, since the subsequence $\{p_\alpha\}$ converges to $q$, there exists a positive integer $N_2$ such that
 		\[  N_2<\alpha  \Longrightarrow   d_F(p_\alpha, q)<\frac{\epsilon}{2}.\]
 		Choose an integer $N$ such that  $N>max\{N_1, N_2\}$, we get
 		\[ N<i  \Longrightarrow  d_F(p_i, q)<d_F(p_i, p_N)+d_F(p_N, q)< \frac{\epsilon}{2}+\frac{\epsilon}{2}=\epsilon.\]
 		Therefore the Cauchy sequence $\{p_i\}$ converges to $q$.
 		$\qedd$
 		
 	\end{proof}

 \begin{remark}
 Let us  recall that the Poincar\'e conjecture (see \cite{KL}) states that every compact connected simply connected 3-dimensional manifold $M$
{is homeomorphic to the three-dimensional sphere $\Sph^3$.  Observe that since we are in dimension three,  $M$ is actually diffeomorphic to $\Sph^3$.  
	
	 Therefore $M$}
 admits a Riemannian metric $h$ and a unit quasi-regular Killing vector field $W$ (see \cite{BN1} for the construction of these). 
By using
{Theorem \ref{Finslerian HR theorem} to $M$ it follows that}
every compact connected simply connected 3-dimensional manifold $M$, i.e.  a manifold homeomorphic to $\Sph^3$,
admits a strong Kropina metric which is  geodesically connected (see also \cite{YS1}).
 
 \end{remark}

 	\subsection{Strong Kropina metrics on Riemannian homogeneous spaces}
 	We end this section with another example. 
 	
 	It is known that even dimensional Riemannian homogeneous spaces $(M,h)$ of constant positive curvature are isometric either to an
 	even dimensional  sphere $\Sph^{2n}$, or to a real projective space, and in these cases, $M$ does not carry a quasi-regular Killing field of constant length.
 	
 	\begin{remark}
 		We recall that a homogeneous Riemannian manifold $M$ of dimension $m\geq 2$ has constant sectional curvature one if and only if
 		$M=\Sph^m\slash \Gamma$, where $ \Sph^m$ is the round sphere of unit constant sectional curvature, and $\Gamma$ is some discrete group of Clifford-Wolf translations.
 		
 		It is also known that such a $\Gamma$ is isomorphic either to a cyclic group, or to a binary dihedral group, or to a binary polyhedral group (see \cite{W}). Clearly, for $\Gamma=\{I\}$ and $\Gamma=\{\pm I\}$ the space $M=\Sph^m\slash \Gamma$ is an Euclidean sphere and a real projective space, respectively (here $I$ is the antipodal map).
 	\end{remark}
 	
 	On the other hand, in the odd dimensional case, there are situations where we can construct geodesically connected strong Kropina manifolds.
 	
 	\begin{proposition}
 		Let $\Gamma$ be a cyclic group of Clifford-Wolf translations on an odd dimensional sphere $\Sph^{2n-1}$, $n\geq 2$, of constant sectional curvature, which is distinct from the groups $\Gamma=\{I\}$ and $\Gamma=\{\pm I\}$.
 		
 		Then, the homogeneous space $M=\Sph^{2n-1}\slash \Gamma$, admits geodesically connected strong Kropina manifolds.
 	\end{proposition}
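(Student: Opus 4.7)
The strategy is to exhibit a quasi-regular unit Killing vector field on the quotient $M=\Sph^{2n-1}/\Gamma$ and then to invoke Theorem \ref{thm: quasi-regular complete} (equivalently Theorem \ref{Finslerian HR theorem}) to conclude the geodesic connectedness of the induced strong Kropina metric.

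First, I would use the classification of Clifford--Wolf translations on $\Sph^{2n-1}$ (see \cite{W} and \cite{BN1}) to embed the finite cyclic group $\Gamma$ inside a one-parameter subgroup of Clifford--Wolf translations. Concretely, after identifying $\Sph^{2n-1}\subset\C^{n}$ in a suitable unitary frame, a generator of $\Gamma$ is conjugate to simultaneous multiplication by a primitive $k$-th root of unity $e^{2\pi i/k}$ on every complex coordinate, so $\Gamma$ sits inside the Hopf circle action $\{z\mapsto e^{i\theta}z:\theta\in\R\}$. The infinitesimal generator of this Hopf action is the vector field $W_{z}=iz$, which is a unit Killing field on $(\Sph^{2n-1},h)$ whose integral curves are the Hopf fibers, i.e.\ closed great circles of common length $2\pi$. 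In particular $W$ is a regular, hence quasi-regular, unit Killing vector field.

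Next, since $\Gamma$ is a subgroup of the flow of $W$, the field $W$ is $\Gamma$-invariant and descends to a well-defined vector field $\overline{W}$ on $M=\Sph^{2n-1}/\Gamma$. With respect to the induced (locally isometric) Riemannian metric $\overline{h}$, the field $\overline{W}$ is again a unit Killing vector field, and its integral curves are the projections of the Hopf fibers, which are closed circles of length $2\pi/|\Gamma|$; hence $\overline{W}$ is quasi-regular on $(M,\overline{h})$. The hypotheses $\Gamma\ne\{I\},\{\pm I\}$ only rule out the round sphere and the real projective space, so $M$ is a genuine new example and, by Lemma on the existence of strong Kropina metrics, the navigation data $(\overline{h},\overline{W})$ induces a strong Kropina metric $F$ on $M$.

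Finally, because $M$ is compact (a finite quotient of a compact manifold), the metric $\overline{h}$ is complete and the flow $\varphi_{t}$ of $\overline{W}$ is defined for all $t\in\R$. By Theorem \ref{Theorem 3.9}, every $F$-geodesic has the form $\mathcal{P}(t)=\varphi_{t}(\rho(t))$ with $\rho$ an $\overline{h}$-geodesic, so $F$-geodesics extend indefinitely forward and $(M,F)$ is forward complete. All hypotheses of Theorem \ref{thm: quasi-regular complete} are thus met, yielding a minimizing $F$-geodesic between any two points of $M$. The main obstacle is the first step: explicitly placing an arbitrary finite cyclic Clifford--Wolf group inside a Hopf-type one-parameter subgroup. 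Once this structural fact is invoked from \cite{W,BN1}, the descent, the compactness-driven completeness, and the application of the Hopf--Rinow type theorem are essentially formal.
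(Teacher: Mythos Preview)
Your proposal is correct and follows essentially the same route as the paper: exhibit a quasi-regular unit Killing field on $M=\Sph^{2n-1}/\Gamma$ and then invoke Theorem~\ref{thm: quasi-regular complete}. The paper's proof is in fact just a one-line citation---it asserts that such a field exists and refers to the proof of Proposition~13 in \cite{BN1} for the construction---whereas you spell out the construction explicitly via the Hopf field $W_z=iz$ and its descent to the quotient; your observation that any Clifford--Wolf translation of $\Sph^{2n-1}$ is multiplication by $e^{i\theta}$ in a suitable complex structure (hence $\Gamma$ sits inside a Hopf circle) is exactly the structural input the paper outsources to \cite{BN1}.
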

 	
 	Indeed, under our hypotheses, the Riemannian homogeneous space $M=\Sph^{2n-1}\slash \Gamma$ admits a quasi-regular Killing field $W$ of constant length. For its effective construction see the proof of Proposition 13 in \cite{BN1}.
 	
 	Moreover, we have
 	
 	\begin{proposition}
 		Every homogeneous Riemannian space $M=\Sph^{4n-1}\slash \Gamma$ admits geodesically connected strong Kropina manifolds, where
 		$\Gamma$ is a discrete group of Clifford-Wolf translations of $\Sph^{4n-1}$, which is isomorphic either to a binary dihedral group, or to a binary polyhedral group.
 	\end{proposition}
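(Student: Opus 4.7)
The plan is to reduce the claim to Theorem \ref{thm: quasi-regular complete} (equivalently Theorem \ref{Finslerian HR theorem}) by producing, on each such quotient $M = \Sph^{4n-1}/\Gamma$, a complete quasi-regular unit Killing vector field $W$. Once such a $W$ is in hand, the navigation data $(h,W)$, where $h$ is the quotient round metric (which is complete because $M$ is compact), induces a strong Kropina structure $F$ on $M$, and the cited theorems immediately yield geodesic connectedness.

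The construction of $W$ proceeds as follows. First I would identify $\Sph^{4n-1}$ with the unit sphere of $\mathbb{H}^n$ and exploit the two commuting actions of the unit quaternions $\Sph^3$ on $\mathbb{H}^n$ by component-wise left and right multiplication. The binary dihedral and binary polyhedral groups embed as finite subgroups of $\Sph^3$, and Wolf's classification \cite{W} presents $\Gamma$ as acting freely on $\Sph^{4n-1}$ by diagonal left quaternionic multiplication, via Clifford–Wolf translations of the round metric (this is the setting invoked in \cite{BN1}, Proposition 13). Second, I would choose the one-parameter subgroup $t\mapsto e^{it}\in \Sph^3\subset \mathbb{H}$ and let it act on $\mathbb{H}^n$ by diagonal right multiplication; the infinitesimal generator $\widetilde W$ is a unit Killing field on $\Sph^{4n-1}$ whose integral curves are exactly the fibers of a Hopf-type fibration, hence all closed (and of equal $h$-length $2\pi$).

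Third, since right and left quaternionic multiplications commute, the flow of $\widetilde W$ commutes with the $\Gamma$-action, so $\widetilde W$ is $\Gamma$-invariant and descends to a smooth unit Killing field $W$ on $M=\Sph^{4n-1}/\Gamma$. Every integral curve of $W$ is the image under the covering projection of a closed integral curve of $\widetilde W$, hence is itself closed; therefore $W$ is quasi-regular in the sense of Definition \ref{def: quasi-regular} (its orbit lengths may now differ because distinct Hopf circles can be identified by $\Gamma$ in inequivalent ways, but this is allowed). Consequently, $(M,h,W)$ meets the hypotheses of Theorem \ref{thm: quasi-regular complete}, and the resulting strong Kropina metric on $M$ is geodesically connected.

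The main obstacle is verifying that the chosen Killing flow truly descends to $M$: this requires knowing that $\Gamma$ can be realized as a subgroup of \emph{left}-multiplications while the quasi-regular flow is produced by \emph{right}-multiplication by a unit complex number. For cyclic $\Gamma$ this is classical, but for the binary dihedral and binary polyhedral cases it is precisely the content that forces the restriction to dimensions of the form $4n-1$ (so that $\mathbb{H}^n$ is available); I would rely on the explicit model in \cite{W} and the construction in the proof of Proposition 13 of \cite{BN1} for this step, rather than redo it. All remaining assertions—completeness of $h$, unit length of $W$, closedness of orbits—are then routine consequences of the quaternionic model.
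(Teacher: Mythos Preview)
Your proposal is correct and follows essentially the same approach as the paper: the paper offers no explicit proof for this proposition but, by context (it immediately follows the cyclic-group case, which defers to Proposition 13 of \cite{BN1}), the intended argument is precisely to invoke the existence of a quasi-regular unit Killing field on $M=\Sph^{4n-1}/\Gamma$ from \cite{BN1} and then apply Theorem \ref{thm: quasi-regular complete}. You carry out the same strategy but make the construction from \cite{BN1} explicit via the quaternionic model---realizing $\Gamma$ inside $\Sph^3$ acting by left multiplication and producing the Killing flow by right multiplication by $e^{it}$---which is exactly the mechanism behind the cited result and clarifies why dimension $4n-1$ is required.
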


 \section{Conjugate and cut points}
 
 \subsection{Conjugate points }

 \hspace{0.2in}  Let $(M, F=\alpha^2/\beta : A\to\R^+)$ be a globally defined forward complete strong Kropina space  with  navigation data   $(h, W)$.

 For any point $p\in M$,  we will consider the correspondence between the conjugate points of $p$ with respect to the Riemannian metric $h$ and the conjugate points of $p$ with respect to the Kropina metric $F$.

 Let $\mathcal P(t) : [0, l] \longrightarrow M$ denote an $F$-unit speed geodesic such that $\mathcal P(0)=p$ and $\mathcal P(l)=q_{_F}$.
 The curve $\rho(t):=\varphi_{-t}(\mathcal P(t))$ is the corresponding $h$-geodesic in $(M, h)$ (see Theorem   \ref{Theorem 3.9}).

 Suppose that $\rho(l)=q_h$ is conjugate to $p$ along $\rho$.
 Then, there exists a Jacobi field $J$ along $\rho$ such that $J(0)=J(l)=0$.
 There exists a geodesic variation $\rho_s : [0, l] \longrightarrow M$, $-\epsilon < s < \epsilon$, of $\rho=\rho_0$  with the variational vector field
 \[	\frac{\partial \rho_s}{\partial s}\bigg|_{s=0}=J,  \]
 (see for eg. Proposition 2.4 in \cite{Ca}).
 
 We will construct an $F$-admissible Kropina geodesic variation by using the above Riemannian geodesic variation.
 Since $\rho_s(t)$ is a geodesic, $|\dot{\rho}_s(t)|_h$ is constant along $\rho_s(t)$.
  So, we put $|\dot{\rho}_s(t)|_h=v(s)$. To reparametrize the $\rho_s(t)$ so that the composition with  $\varphi_t$ is a Kropina geodesic, we put $t=u/v(s)$ and
  \[	\widetilde{\rho}_s(u):=\rho_s(\frac{1}{v(s)}u)=\rho_s(t),\quad -\epsilon < s < \epsilon.
  \]
 Hence, we get $|\dot{\widetilde{\rho}}_s(u)|_h=1$.
 
 Furthermore, since we have $\dot{\mathcal P}(t) \ne 0$,  we get $\dot{\rho}(t)\ne -W(\rho(t))$.
 So,  we can suppose that for a sufficiently small $\epsilon >0$ the tangent vector $\dot{\widetilde{\rho}}_s(u)$, $-\epsilon <s<\epsilon$, is not $-W( \widetilde{\rho}_s(u))$.

 Therefore, we get  Kropina geodesics
 \[	\widetilde{\mathcal P}_s(u):=\varphi_u(\widetilde{\rho}_s(u))=\varphi_{v(s)t}(\rho_s(t))=:\mathcal P_s(t),\quad -\epsilon < s < \epsilon.  \]
 Since the geodesic $\widetilde{\mathcal P}_s(u)$ is $F$-unit constant speed, the geodesic $\mathcal P_s(t)$ must have the $F$-speed $v(s)$.
 
 We consider the $F$-geodesic variation $\mathcal P_s(t)$, $-\epsilon < s < \epsilon$, 
 of $\mathcal P(t)=\mathcal P_0(t)$.
 Denoting a Jacobi field along $\mathcal P(t)$ by $\mathcal{J}$, we have
 \begin{equation}\label{6.24}
 	\mathcal{J}(t):=\frac{\partial \mathcal P_s(t)}{\partial s}\bigg|_{s=0}
 	             =(v'(0) W_{\mathcal P})t+(d\varphi_t)(J),
 \end{equation}
 where $d\varphi_t$ denotes the differential of $\varphi_t$.

 Since $v(s)=\sqrt{h(\dot{\rho}_s(t), \dot{\rho}_s(t))}$, we get
 \begin{eqnarray*}
 	v'(s)=\frac{h(\dot{\rho}_s(t), \frac{\delta \dot{\rho}_s(t)}{\delta s})}{\sqrt{h(\dot{\rho}_s(t), \dot{\rho}_s(t))}}
 	     =\frac{h(\dot{\rho}_s(t), \frac{\delta}{\delta t}\frac{\partial \rho_s(t)}{\partial s})}{\sqrt{h(\dot{\rho}_s(t), \dot{\rho}_s(t))}}.
 \end{eqnarray*}
 Putting $s=0$, we get
 \[	v'(0)=\frac{h(\dot{\rho}(t), \frac{\delta}{\delta t}J(t))}{\sqrt{h(\dot{\rho}(t), \dot{\rho}(t))}}
 	     = \frac{ \frac{\delta}{\delta t}h(\dot{\rho}(t),J(t))}{\sqrt{h(\dot{\rho}(t), \dot{\rho}(t))}}.  \]
 Since the Jacobi field $J$ satisfies the conditions $J(0)=0=J(l)$, it follows that $h(\dot{\rho}(t), J(t))\equiv 0$ for all $t \in [0, l]$ (see for e.g. Corollary 3.7 in \cite{Ca}).
  Hence, we get $v'(0)=0$.
 Therefore, the equation (\ref{6.24}) reduces to
 \[	\mathcal{J} =(d\varphi_t)(J). \]
 Since $\varphi_t$ is an isometry, it follows that the Jacobian $d\varphi_t$ is nonsingular. Hence, $\mathcal{J}$ vanishes if and only if $J$ does. In particular, $\mathcal{J}$ is non-zero and $\mathcal{J}(0)=0=\mathcal{J}(l)$. Equivalently, $q_{_F}=\mathcal P(l)$ is conjugate to $p=\mathcal P(0)$ along $\mathcal P$ with respect to $F$.
 
 By the above argument, we proved the following :
 
 \vspace{0.05in}
  {\it Suppose that $\mathcal P : [0, l] \longrightarrow M$ is an $F$-unit speed geodesic with the associated Riemannian geodesic $\rho$. Then, $\mathcal P(l)$ is conjugate to $\mathcal P(0)$ with respect to $F$ whenever $\rho(l)$ is conjugate to $\rho(0)$ with respect to  $h$.}
 \vspace{0.05in}

 In a similar way to the above proof, we can show the converse  statement  :
 \vspace{0.05in}
 
 {\it Suppose that  $\rho : [0, l] \longrightarrow M$ is an $h$-unit speed geodesic with the associated  Kropina geodesic $\mathcal P$.
 Using the above result, it follows that $\rho(l)$ is conjugate to $\rho(0)$ with respect to $h$ whenever $\mathcal P(l)$ is conjugate to $\mathcal P(0)$ with respect to $F$.}
 \vspace{0.05in}
 
 Summarizing the above discussion, we get

 \begin{theorem}\label{Theorem 3.17}
 Let $(M, F=\alpha^2/\beta : A\to\R^+)$ be a globally defined strong Kropina space with navigation data  $(h, W)$.
 
 Suppose that there exists a one-parameter group $\{\varphi_t\}$ of isometries on $(M, h)$ which generates the unit Killing vector field $W$ and that  $\mathcal P : [0, l] \longrightarrow M$ is an $F$-unit speed geodesic of the Kropina space $(M, F)$.
 
 Then, $\mathcal P(l)$ is conjugate to $\mathcal P(0)$ along $\mathcal P$ with respect to $F$ if and only if $\rho(l)$ is conjugate to $\rho(0)$ along the $h$-Riemannian geodesic $\rho(t):=\varphi_{-t}( \mathcal P(t)) $.
 \end{theorem}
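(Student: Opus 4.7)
The plan is to exploit the flow correspondence $\mathcal P(t)=\varphi_t(\rho(t))$ from Theorem \ref{Theorem 3.9} to set up a bijection between Jacobi fields along $\rho$ and Jacobi fields along $\mathcal P$, with matching vanishing behavior at the endpoints. Since $\{\varphi_t\}$ acts by $h$-isometries, it preserves geodesic variations of $(M,h)$; the technical issue is that it does not a priori preserve the \emph{parametrization} required to match an $F$-unit-speed variation with an $h$-unit-speed variation, so a reparametrization in the variation parameter will be needed, producing a potentially dangerous correction term along $W$. The main conclusion to prove is that this correction term vanishes whenever the Jacobi field vanishes at both endpoints.

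First, I would take a nontrivial Jacobi field $J$ along $\rho$ with $J(0)=0$ and $J(l)=0$, and realize it as the variational field of an $h$-geodesic variation $\rho_s(t)$, $-\epsilon<s<\epsilon$, with $\rho_0=\rho$ (this is standard, see \cite{Ca}). Each $\rho_s$ is an $h$-geodesic but has $|\dot\rho_s|_h=v(s)$ which is a priori $\ne 1$ for $s\ne 0$. I would then reparametrize by $\widetilde{\rho}_s(u):=\rho_s(u/v(s))$ to obtain $h$-unit-speed geodesics, and apply the construction of Theorem \ref{Theorem 3.9} to form the $F$-unit-speed Kropina geodesic variation
\[
\widetilde{\mathcal P}_s(u):=\varphi_u(\widetilde\rho_s(u))=\varphi_{v(s)t}(\rho_s(t)),
\]
defined for $s$ small enough (so that $\dot{\widetilde\rho}_s\ne -W$, using Lemma \ref{lem: non-degenerate curve} and continuity). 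Differentiating in $s$ at $s=0$ with the original parameter $t$ yields a variation field along $\mathcal P$ of the form
\[
\mathcal{J}(t)=v'(0)\,t\,W_{\mathcal P(t)}+(d\varphi_t)(J(t)),
\]
which is a Jacobi field along $\mathcal P$ because it comes from a geodesic variation.

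The key step is to show $v'(0)=0$. Using $v(s)=|\dot\rho_s|_h$ (constant in $t$ since each $\rho_s$ is a geodesic) and the standard identity $\nabla_s\dot\rho_s=\nabla_t J$, one gets
\[
v'(0)=h(\dot\rho,\nabla_t J)=\tfrac{d}{dt}\,h(\dot\rho,J),
\]
because $\rho$ is a geodesic. Now the scalar function $t\mapsto h(\dot\rho(t),J(t))$ is affine along $\rho$ (a standard Jacobi field fact), and it vanishes at $t=0$ and $t=l$, hence vanishes identically; therefore $v'(0)=0$. This reduces the formula to $\mathcal{J}(t)=(d\varphi_t)(J(t))$. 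Since $\varphi_t$ is a diffeomorphism, $d\varphi_t$ is a linear isomorphism $T_{\rho(t)}M\to T_{\mathcal P(t)}M$, so $\mathcal{J}$ is nontrivial and $\mathcal{J}(l)=0$, giving that $\mathcal P(l)$ is $F$-conjugate to $\mathcal P(0)$.

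For the converse, I would run the symmetric argument: start with a Jacobi field $\mathcal{J}$ along $\mathcal P$ vanishing at the endpoints, realize it as the variational field of an $F$-unit-speed geodesic variation $\mathcal P_s$, and use $\rho_s(t):=\varphi_{-t}(\mathcal P_s(t))$ (from Theorem \ref{Theorem 3.9}) to produce an $h$-geodesic variation of $\rho$, whose variational field $J$ then satisfies $J=(d\varphi_{-t})(\mathcal{J})$ after the analogous reparametrization and the analogous vanishing of the $W$-correction term. The main obstacle in the whole argument is cleanly handling the reparametrization and verifying that the would-be correction along $W_{\mathcal P}$ disappears; everything else is a transfer via the isometric flow.
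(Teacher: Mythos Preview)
Your proposal is correct and follows essentially the same approach as the paper: construct the $F$-geodesic variation $\varphi_{v(s)t}(\rho_s(t))$ from an $h$-geodesic variation, obtain $\mathcal J(t)=v'(0)\,t\,W_{\mathcal P(t)}+(d\varphi_t)(J(t))$, and kill the $W$-term by showing $h(\dot\rho,J)\equiv 0$ (the paper cites \cite{Ca}, Corollary 3.7, which is exactly your ``affine and vanishing at both endpoints'' observation). The converse is likewise handled symmetrically in both.
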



 \subsection{Cut points}\label{sec: Cut points}
 
Similarly with the classical Finsler manifolds, if a unit speed (non-constant) geodesic segment $\mathcal P:[0,l]\to M$ is maximal, as geodesic segment, then the point $q:=\mathcal P(l)$ is called a {\it cut point} of the point $p:=\mathcal P(0)$ along the $F$-geodesic $\mathcal P$ (see \cite{BCS} and \cite{ST} for classical case). 

The {\it cut locus} of $p$, denoted by $\mathcal C_p$ is the set of all cut points along all non-constant $F$-geodesic segments from $p$.

\begin{remark}\label{rem: classical cut locus prop}
	In the case of a classical Finsler manifold, the following basic properties of the cut locus are well-known (\cite{BCS}, \cite{ST}) 
	\begin{enumerate}
		\item $p\notin \mathcal C_p$.
		\item If a point $q\in M$ admits (at least) two geodesic segments from $p$ of equal length, then $q\in \mathcal{C}_p$. 
		\item If $q\in \mathcal{C}_p$ then one of the following situations happens
			\begin{enumerate}
				\item $q$ is conjugate to $p$, or
				\item there exist (at least) two geodesic segments from $p$ to $q$ of equal length.
			\end{enumerate}
	\end{enumerate}
	We point out that these properties are not true in the case of a generic Kropina manifold as will be seen in the examples below. 
\end{remark}

\begin{proposition}
	Let $(M,h)$ be a complete Riemannian manifold and let $(M,F:A\to \R^+)$ be a strong Kropina manifold with navigation data $(h,W)$, where $W$ is a quasi-regular Killing field. 
	
	Then any two points $p,q\in M$ can be joined by a globally $F$-length minimizing geodesic.

\end{proposition}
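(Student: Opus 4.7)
My plan is to show that this statement is essentially Theorem~\ref{thm: quasi-regular complete}, the only difference being that the forward completeness of $F$ assumed there needs to be derived here from completeness of $h$. So the main preliminary step is to establish forward completeness of $F$ under the present hypotheses, after which Theorem~\ref{thm: quasi-regular complete} applies verbatim.

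For forward completeness, I would use Theorem~\ref{Theorem 3.9}: every $F$-unit speed geodesic $\mathcal P$ takes the form $\mathcal P(t)=\varphi_t(\rho(t))$, where $\{\varphi_t\}$ is the flow of $W$ and $\rho$ is an $h$-unit speed geodesic satisfying $\dot\rho(0)\ne -W(\rho(0))$. Completeness of $(M,h)$ extends $\rho$ to $[0,\infty)$; since $W$ is a Killing field on a complete Riemannian manifold its flow $\{\varphi_t\}$ is defined for all $t\in\R$; and Lemma~\ref{lem: non-degenerate curve} ensures that the non-degeneracy of $\dot{\mathcal P}$ persists for all $t\geq 0$ once it holds initially. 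Hence $\mathcal P$ extends to $[0,\infty)$, so $F$ is forward complete.

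With forward completeness in hand, the existence of a minimizing $F$-geodesic follows directly from Theorem~\ref{thm: quasi-regular complete}. For the reader's benefit I would sketch the core fixed-point argument. By Proposition~\ref{prop: fix point characterization} it suffices to produce a fixed point of the map $\delta(\tau)=d^h(p,\varphi_{-\tau}(q))$. Quasi-regularity of $W$ makes $\delta$ periodic with period $T_q$ equal to the $h$-length of the closed integral curve of $W$ through $q$, so $\delta(0)=d^h(p,q)>0$ while $\delta(nT_q)=d^h(p,q)<nT_q$ for $n$ large enough. Continuity of $\delta$ and the intermediate value theorem then produce a positive fixed point $\tau_0$; the $h$-geodesic $\rho$ from $p$ to $\varphi_{-\tau_0}(q)$ of $h$-length $\tau_0$ yields the desired $F$-geodesic $\mathcal P(t)=\varphi_t(\rho(t))$ of $F$-length $\tau_0$ from $p$ to $q$.

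The step requiring the greatest care is the claim of \emph{global} minimality, which is the hardest part of the argument. Any $F$-admissible curve $c$ from $p$ to $q$ of $F$-length $L$, reparametrized to $F$-unit speed, has the form $c(s)=\varphi_s(\tilde\rho(s))$ for some $h$-unit speed curve $\tilde\rho$ joining $p$ to $\varphi_{-L}(q)$, so $L\geq d^h(p,\varphi_{-L}(q))=\delta(L)$. Combined with $\delta(0)>0$, the continuous function $\tau-\delta(\tau)$ then has a zero in $[0,L]$, so every admissible length $L$ dominates some positive fixed point of $\delta$; in particular the smallest such fixed point $\tau_0$ satisfies $\tau_0\leq L$. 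Taking the infimum over all admissible $L$ gives $\tau_0=d_F(p,q)$, and the Kropina geodesic produced above is therefore globally minimizing.
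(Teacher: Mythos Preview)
Your argument is correct and follows essentially the same route as the paper: first derive forward geodesic completeness of $F$ from completeness of $h$ (the paper does this via the exponential-map identity \eqref{F and h-exponential maps}, you via Theorem~\ref{Theorem 3.9}; these are the same observation), then invoke the already-established connectedness result for quasi-regular $W$ (the paper cites Theorem~\ref{Finslerian HR theorem}, you cite Theorem~\ref{thm: quasi-regular complete}, which is what underlies the ``Moreover'' clause of the former anyway).

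Your final paragraph on \emph{global} minimality goes beyond what the paper writes here and is a genuine addition. The paper's proof of Theorem~\ref{thm: quasi-regular complete} only exhibits a geodesic whose length is \emph{some} fixed point of $\delta$ and defers minimality to Theorem~\ref{long geod} (itself quoted from \cite{JS2}). Your direct argument---that any $F$-unit-speed admissible curve $c(s)=\varphi_s(\tilde\rho(s))$ yields an $h$-unit-speed $\tilde\rho$ and hence $L\ge\delta(L)$, forcing a fixed point in $(0,L]$---is self-contained and shows concretely that the \emph{smallest} positive fixed point of $\delta$ equals $d_F(p,q)$. This is a cleaner justification of minimality than the paper provides at this spot.
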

\begin{proof}
	The completeness of the Riemannian metric $h$ implies that, for any $x\in M$,  the $h$-exponential map $e_x$ is defined on whole $T_xM$ (Hopf-Rinow Theorem for Riemannian manifolds). 
	
	On the other hand, remark that $W$ is complete vector field on $M$.
	
	Using \eqref{F and h-exponential maps} it follows that, for each $x\in M$, the $F$-exponential map $\overline{\exp}_x$ is defined in whole $A_x\subset T_xM$ and hence $(M,F)$ is complete. Theorem  \ref{Finslerian HR theorem} implies that any two points on $M$ can be joined by a globally $F$-length minimizing geodesic as stated.
		$\qedd$
	\end{proof}
	
	\begin{remark}
		Taking into account that $W$ is a unit Killing field it results that 
		$\mathcal P(t)=\varphi_t(\rho(t))$, $t\in [0,l]$, is a global minimizer of $F$-path lengths from $p=\mathcal P(0)$ to $q=\mathcal P(l)$ if and only if the corresponding $h$-geodesic $\rho(t)$, $t\in [0,l]$, is a global minimizer of $h$-path lengths from $p=\rho(0)$ to $\hat{q}=\rho(l)$. 
	\end{remark}
	
	It follows

 \begin{proposition}
\label{cor: F-cut locus}
Let $(M,h)$ be a complete Riemannian manifold and let $(M,F:A\to \R^+)$ be a strong Kropina manifold with navigation data $(h,W)$, where $W$ is a
 unit Killing field. 
			
	Then the point $q=\mathcal P(l)$ is an $F$-cut point of $p=\mathcal P(0)$  along the $F$-geodesic segment $\mathcal P(t)$ if and only if $\hat{q}=\rho(l)$ is an $h$-cut point of $p=\rho(0)$  along the $h$-geodesic segment $\rho(t)$. 
\end{proposition}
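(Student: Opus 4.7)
The plan is to reduce the statement directly to the $F$-versus-$h$ correspondence already established in the paper, so that the equivalence of cut points becomes an immediate consequence of Theorem \ref{Theorem 3.9} together with the length-minimization equivalence stated in the Remark immediately preceding the Proposition. Concretely, I will (a) translate each $F$-geodesic segment into an $h$-geodesic segment using the same parameter, (b) translate the $F$-minimizing property into the $h$-minimizing property via that Remark, and (c) conclude by unpacking the definition of cut point.

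First, Theorem \ref{Theorem 3.9} furnishes the bijection $\mathcal P(t)=\varphi_t(\rho(t))$, $\rho(t)=\varphi_{-t}(\mathcal P(t))$, between $F$-unit speed geodesics and $h$-unit speed geodesics starting at $p$, and it does so without reparametrization. Consequently, for every $t\in(0,l]$, the truncation $\mathcal P|_{[0,t]}$ corresponds to $\rho|_{[0,t]}$ with matched endpoints $p=\mathcal P(0)=\rho(0)$ and $\mathcal P(t)=\varphi_t(\rho(t))$. Combining this with the preceding Remark, $\mathcal P|_{[0,t]}$ is a global $F$-minimizer from $p$ to $\mathcal P(t)$ if and only if $\rho|_{[0,t]}$ is a global $h$-minimizer from $p$ to $\rho(t)$. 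Completeness of $h$ (hence forward completeness of $F$ via \eqref{F and h-exponential maps}) ensures that the extended segments $\mathcal P|_{[0,l+\epsilon]}$ and $\rho|_{[0,l+\epsilon]}$ exist for small $\epsilon>0$, so the minimality question is well-posed on both sides.

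With these tools in hand, I unpack the definition of cut point from \S\ref{sec: Cut points}: $q=\mathcal P(l)$ is an $F$-cut point of $p$ along $\mathcal P$ precisely when $l$ is the supremum of those $t$ for which $\mathcal P|_{[0,t]}$ is $F$-length-minimizing, i.e.\ $\mathcal P|_{[0,t]}$ is $F$-minimizing for $t\leq l$ and fails to be so for any $t>l$. Applying the length-minimization equivalence term-by-term translates this verbatim into the statement that $\rho|_{[0,t]}$ is $h$-minimizing for $t\leq l$ and fails to be so for any $t>l$, that is, $\hat q=\rho(l)$ is an $h$-cut point of $p$ along $\rho$. I do not anticipate a substantive obstacle: the argument is essentially a repackaging of Theorem \ref{Theorem 3.9} and the preceding Remark. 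The only point requiring care is verifying that the informal phrasing ``maximal as geodesic segment'' in the cut locus definition matches the supremum formulation above, and that the shared parameter $t$ is preserved under $\varphi_t$ — both of which are automatic since $\varphi_t$ is an $h$-isometry and the correspondence $\mathcal P\leftrightarrow\rho$ is explicitly parameter-preserving.
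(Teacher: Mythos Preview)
Your proposal is correct and follows essentially the same route as the paper: the paper states the proposition as an immediate consequence (``It follows'') of the preceding Remark on the equivalence of global $F$- and $h$-minimization together with Theorem \ref{Theorem 3.9}, and you have simply spelled out that deduction in detail by unpacking the cut-point definition on both sides.
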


 \section{Examples}
 
The results presented in the previous sections 
 can be used for constructing complete, geodesically connected strong Kropina spaces.
 
 \subsection{Odd dimensional spheres}
 
 \subsubsection{The action of $\Sph^1$ on $\Sph^{2n-1}$}
 
 Let $M=\Sph^{2n-1}$ be the round sphere in $\R^{2n}$ with the canonical induced metric $h$. We regard $\Sph^{2n-1}$ as the subset
 \[
 \Sph^{2n-1}=\{z=(z_1,z_2,\dots.z_n)\in \C^n\ :\ ||z||=1\}
 \]
 of the complex space $\C^n(=\R^{2n})$, $n\geq 2$, with the canonical Hermitian norm.
 
 We define the action of  $\Sph^1$ on $\Sph^{2n-1}$ as follows
 \begin{equation}\label{canonical action}
 \varphi:\Sph^1\times \Sph^{2n-1}\to \Sph^{2n-1},\quad \varphi(s,z)=(s^{a_1}z_1,s^{a_2}z_2,\dots, s^{a_n}z_n ),\quad s=e^{it},
 \end{equation}
 where $a_1\geq a_2\geq \dots \geq a_n$ are some (positive) real constants. It is easy to see that this is an isometric action of   $\Sph^1$ on $\Sph^{2n-1}$ with respect to $h $.

  The vector field generated by this action is
  \[
  W_z=i(a_1z_1,a_2z_2,\dots,a_nz_n).
  \]
  
 
 \begin{remark}
 In the case $\Sph^3\subset \C^2= \R^4$, the action $\varphi:\Sph^1\times \Sph^3\to \Sph^3$ can be written as
\begin{equation*}
\varphi_{t}(x)=\begin{pmatrix}
\cos a_1t & -\sin a_1t & 0 & 0 \\
\sin a_1t & \cos a_1t & 0 & 0 \\
 0 & 0 & \cos a_2t & -\sin a_2t \\
 0 & 0 & \sin a_2t & \cos a_2t
\end{pmatrix}
\begin{pmatrix}
x_1 \\ x_2\\ x_3\\ x_4
\end{pmatrix},
\end{equation*}
 where $z_1=x_1+ix_2$, $z_2=x_3+ix_4$, and $x=(x_1,x_2,x_3,x_4)\in \Sph^3$.

  In this case, we obtain
  \[
  W_x=\begin{pmatrix}
  0 & -a_1 & 0 & 0 \\
  a_1 &  0 & 0 & 0 \\
   0 & 0 & 0 & -a_2 \\
   0 & 0 & a_2 &  0
  \end{pmatrix}
  \begin{pmatrix}
  x_1 \\ x_2\\ x_3\\ x_4
  \end{pmatrix}.
  \]
 
 \end{remark}
 
 In the special case $a_1=a_2=\dots=a_n=1$ it follows $W$ is an $h$-unit Killing vector field on the round sphere $\Sph^{2n-1}$, and hence we obtain the navigation data $(h,W)$ that induces a strong Kropina metric on  $\Sph^{2n-1}$. We will call in the following $(h,W)$  and the induced strong Kropina metric the {\it canonical navigation data} and the {\it canonical Kropina metric} of $\Sph^{2n-1}$, respectively.
 
 
 \subsubsection{The geodesics on the round sphere $(\Sph^{2n-1},h)$}
 
 On the round sphere  $(\Sph^{2n-1},h)$,
 for a point $z=(z_1,z_2,\dots.z_n)\in \Sph^{2n-1}$ and a tangent vector $v=(v_1,v_2,\dots,v_n)\in T_z\Sph^{2n-1}$,
 $| |z| |=| |v| |=1$,
 the $h$-geodesic $\rho:[0,2\pi]\to \Sph^{2n-1}$ with initial conditions $(z,v)$ is the great circle
 \[
 \rho(t)=z\cos t+v\sin t.
 \]
 
 Clearly, the $h$-geodesics are $2\pi$-periodic and minimizing from $p$ to its antipodal point.
 
 
 \subsubsection{The geodesics of $(\Sph^{2n-1},F)$}\label{Subsubsection 8.1.3}
 
 By using the expressions given already for the $h$-geodesics and the flow $\varphi$ we obtain the $F$-geodesics of the canonical Kropina metric on
 $\Sph^{2n-1}$ by means of Theorem \ref{Theorem 3.9}.
 
 Indeed,
 \begin{equation}\label{F-geodesic on the sphere}
\mathcal P(t)=\varphi_{t}(\rho(t))=s(t)(z\cos t+v\sin t),
 \end{equation}
 where $s(t)=\cos t+i \sin t\in \Sph^1$, and $(z,v)$ are the initial conditions of the $h$-geodesic $\rho$.
 
 \begin{remark}
 In the case of $\Sph^3$ we get
 \[
 \mathcal P(t)=
 \begin{pmatrix}
 \cos t & -\sin t & 0 & 0 \\
 \sin t & \cos t & 0 & 0 \\
  0 & 0 & \cos t & -\sin t \\
  0 & 0 & \sin t & \cos t
 \end{pmatrix}
 \begin{pmatrix}
 x_1\cos t+v_1\sin t \\ x_2\cos t+v_2\sin t\\x_3\cos t+v_3\sin t \\ x_4\cos t+v_4\sin t
 \end{pmatrix}.
 \]
 
 \end{remark}
 
 Returning now to the general case, observe that $\mathcal P(0)=s(0)z=z=\rho(0)$ and
 $\dot{\mathcal P}(0)=iz+v=W_z+\dot{\rho}(0)$ as expected. Remark that in order to get $F$-admissible geodesics we need to impose condition $v\neq -iz$.
 
 We also observe that
 \[
 \langle \dot{\mathcal P}(0), W_z \rangle=1+\langle v,W_z\rangle >0,
 \]
 where $\langle\cdot , \cdot \rangle$ is the Euclidean inner product in $\R^{2n}$.

\subsubsection{The geodesic connectedness of the canonical strong Kropina manifold $(\Sph^{2n-1},F)$}

Let us fix two points $p$ and $q$ on $\Sph^{2n-1}$ and ask the question whether we can always join these two points by an $F$-geodesic.

Since $(\Sph^{2n-1},h)$ is the round sphere it has sectional curvature $K=1$ and the flow $\varphi$ is a smooth free  isometric action of
$\Sph^1$ on $\Sph^{2n-1}$, so it follows that the strong Kropina manifold   $(\Sph^{2n-1},F)$ with canonical metric is forward geodesically connected.


Firstly, in the case $p$ is not on the integral line of $W$ through $q$, we consider $q^-=\varphi(-l,q)$ on the flow through $q$, where $l=d(p,q)$, and let $\rho$ denote the $h$-geodesic from $p$ to $q^-$. Then $\mathcal P:[0,l]\to \Sph^{2n-1}$, $\mathcal P(t)=\varphi(t,\rho(t))$ is the desired $F$-geodesic from $p$ to $q$.

Secondly, if $p\in \varphi((-\infty,0),q)$, then the $F$-geodesic from $p$ to $q$ coincides as points set with $\varphi([-l,0],q)$.

Thirdly, for $p\in \varphi((0,\infty),q)$, clearly there is no geodesic going against the wind, but using the periodicity of $W$ we can see that
$p=\varphi(2\pi-l,q)$ and hence we can reduce this case to the precedent one.

\begin{remark}
Recall that in the case of a smooth effective almost-free (or pseudo-free) action of $\Sph^1$ on a manifold $M$ we can construct a Riemannian metric $\widetilde{h}$ such that the generated vector field $W$ is unit length.

In the case $M=\Sph^{2n-1}$, $n\geq 2$, for any $\varepsilon>0$ there exists navigation data $(h_\varepsilon,W)$, where
\begin{enumerate}[(i)]
\item $h_\varepsilon$ is a (real analytical) Riemannian metric on $\Sph^{2n-1}$ all of whose sectional curvatures differ from 1 by at most $\varepsilon$;
\item $W$ is a unit Killing vector field with closed integral lines
\end{enumerate}
(see Theorem 20 in \cite{BN1}). In this case, the constants $a_i$ in \eqref{canonical action} do not necessarily need to be all equal to one.

\end{remark}

We can conclude

\begin{theorem}
For any pseudo-free action of  $\Sph^1$ on $\Sph^{2n-1}$ there exists a strong Kropina metric on $\Sph^{2n-1}$ which is forward complete and geodesically connected, respectively.
\end{theorem}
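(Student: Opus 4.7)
The plan is to combine the existence result cited in the preceding remark (Theorem 20 in \cite{BN1}) with the Hopf--Rinow type statement in Theorem \ref{Finslerian HR theorem} and the geodesic connectedness result in Theorem \ref{thm: quasi-regular complete}. Concretely, given a pseudo-free action of $\Sph^1$ on $\Sph^{2n-1}$, I would first invoke the cited result to produce a real analytic Riemannian metric $h_\varepsilon$ on $\Sph^{2n-1}$ such that the Killing field $W$ generating the action has unit $h_\varepsilon$-length and closed integral lines. Since $\Sph^{2n-1}$ is compact, $h_\varepsilon$ is automatically complete, and the closedness of all integral lines of $W$ is exactly the definition of $W$ being quasi-regular in the sense of Definition \ref{def: quasi-regular}.

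Next, I would declare $F$ to be the strong Kropina metric induced by the navigation data $(h_\varepsilon, W)$; this is well defined because $W$ is a unit Killing field by construction. To establish forward completeness, I would check condition (iii) of Theorem \ref{Finslerian HR theorem}, namely that for each $p \in \Sph^{2n-1}$ the $F$-exponential map $\exp_p$ is defined on all of $A_p$. This follows immediately from the factorization formula \eqref{F and h-exponential maps}: the Riemannian exponential $e_p$ is defined on the whole $T_p\Sph^{2n-1}$ because $h_\varepsilon$ is complete, and the flow $\varphi_t$ is defined for all $t \in \R$ because $W$ is a complete vector field on a compact manifold. Hence $\exp_p(ty) = \varphi_t \circ e_p(t(y - W_p))$ is defined for every $y \in A_p$ and every $t \geq 0$, giving forward completeness.

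For geodesic connectedness, I would apply Theorem \ref{thm: quasi-regular complete} directly: $(M, F)$ is a forward complete strong Kropina space whose navigation data $(h_\varepsilon, W)$ involves a complete Riemannian metric and a quasi-regular unit Killing field, which are exactly the hypotheses needed to conclude that any two points can be joined by a minimizing $F$-unit speed geodesic. Alternatively, one could re-derive this as in Subsection 8.1.4: for two arbitrary points $p,q$, one looks at the integral curve $\varphi_t(q)$ and uses the continuity of the function $\delta(\tau) = d^{h_\varepsilon}(p, \varphi_{-\tau}(q))$ on a full period of $W$ through $q$ to obtain a fixed point of $\delta$, then applies Proposition \ref{prop: fix point characterization}.

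I do not anticipate a real obstacle in this proof, since all the hard work has been done in the preceding sections and in the cited reference \cite{BN1}. The only mildly subtle point is checking that the hypotheses of Theorem \ref{thm: quasi-regular complete} are literally satisfied in the pseudo-free setting where the weights $a_i$ in \eqref{canonical action} need not all equal one; here one must rely on the cited Theorem 20 of \cite{BN1} to guarantee that, after rescaling the metric in the appropriate directions, $W$ can still be arranged to have unit length while preserving the quasi-regularity coming from the closedness of all $\Sph^1$-orbits in a pseudo-free action. Once this is in hand, the two conclusions (forward completeness and geodesic connectedness) follow by citation.
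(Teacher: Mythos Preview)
Your proposal is correct and follows essentially the same approach as the paper: invoke Theorem 20 of \cite{BN1} (as recalled in the remark immediately preceding the statement) to obtain navigation data $(h_\varepsilon, W)$ with $W$ a unit Killing field having closed integral lines, then appeal to the compactness of $\Sph^{2n-1}$ and the earlier results (Theorem \ref{thm: quasi-regular complete} and Theorem \ref{Finslerian HR theorem}) to conclude forward completeness and geodesic connectedness. The paper in fact states the theorem as a direct conclusion of that preceding remark without writing out further details, so your explicit verification of the hypotheses is, if anything, more thorough than what appears in the text.
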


\begin{remark}
The  forward complete, forward geodesically connected canonical strong Kropina manifold $\Sph^{2n-1}$ have the properties
\begin{enumerate}[(i)]
\item All geodesics of $F$ are closed.
Indeed, one can easily see that
\begin{equation*}
\begin{cases}
\mathcal P(0)=\mathcal P(\pi)=\mathcal P(2\pi)=z, \hspace{0.2in}\\
\dot{\mathcal P}(0)=\dot{\mathcal P}(\pi)=\dot{\mathcal P}(2\pi)=iz+v.
\end{cases}
\end{equation*}
( If $a_1,\dots,a_n$ are not equal to 1, then they must be rational numbers, such that all geodesics of $F$ close.)

\item It is of constant positive sectional curvature.
\item For the canonical strong Kropina metric, one can see that the $F$-length of a great circle is $\pi$.

\item In the case $\rho(t)$ is an $h$-geodesic of $\Sph^{2n-1}$ invariant under the flow of $W$, then the resulting Kropina geodesic is just a
reparametrization of $\rho$.
\end{enumerate}
\end{remark}

If we take into account that for a given point $z\in \Sph^{2n-1}$, its  $h$-conjugate point, that is the $h$-cut locus of $z$, is the antipodal point and Corollary \ref{cor: F-cut locus}, we obtain

\begin{proposition}
	In the canonical strong Kropina space $(\Sph^{2n-1},F)$, the $F$-conjugate and the $F$-cut point of a point $z\in \Sph^{2n-1}$ is the point $z$ itself. 
\end{proposition}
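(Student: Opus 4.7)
The plan is to combine the correspondence between $F$-geodesics and $h$-geodesics (Theorem \ref{Theorem 3.9}) with the matching of conjugate points (Theorem \ref{Theorem 3.17}) and cut points (Proposition \ref{cor: F-cut locus}), reducing the problem to the well-known conjugate/cut locus of the round sphere and then performing the explicit computation of $\mathcal P(\pi)$ for the canonical flow.

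Fix $z\in \Sph^{2n-1}$. Every $F$-unit speed geodesic issuing from $z$ has, according to Subsection \ref{Subsubsection 8.1.3}, the form
\begin{equation*}
\mathcal P_v(t)=\varphi_t(\rho_v(t))=e^{it}\bigl(z\cos t+v\sin t\bigr),
\end{equation*}
where $\rho_v(t)=z\cos t+v\sin t$ is the great circle on $(\Sph^{2n-1},h)$ with $\rho_v(0)=z$ and $\dot\rho_v(0)=v$, subject to $\|v\|=1$ and $v\neq -W_z=-iz$. It is classical that on the round sphere the first $h$-conjugate point and the $h$-cut point of $z$ along any such great circle occur simultaneously at the antipodal point $-z$, reached at parameter $t=\pi$. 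Thus the $h$-conjugate locus and $h$-cut locus of $z$ reduce to the single point $\{-z\}$.

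Now invoke Theorem \ref{Theorem 3.17}: the point $\mathcal P_v(\pi)$ is $F$-conjugate to $\mathcal P_v(0)=z$ along $\mathcal P_v$ if and only if $\rho_v(\pi)=-z$ is $h$-conjugate to $\rho_v(0)=z$ along $\rho_v$, which holds. Similarly, by Proposition \ref{cor: F-cut locus}, $\mathcal P_v(\pi)$ is the $F$-cut point of $z$ along $\mathcal P_v$ because $\rho_v(\pi)=-z$ is the $h$-cut point of $z$ along $\rho_v$. It remains to identify $\mathcal P_v(\pi)$ explicitly. Since the canonical Killing field corresponds to $\varphi_t(w)=e^{it}w$ (the case $a_1=\dots=a_n=1$), we compute
\begin{equation*}
\mathcal P_v(\pi)=e^{i\pi}\bigl(z\cos\pi+v\sin\pi\bigr)=(-1)(-z)=z.
\end{equation*}
Because this holds for every admissible initial direction $v$, both the $F$-conjugate locus and the $F$-cut locus of $z$ coincide with $\{z\}$ itself.

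The only subtle point, and therefore the main thing to check carefully, is that the correspondence between $F$- and $h$-conjugate/cut data is genuinely bijective on the set of $F$-geodesics emanating from $z$; this is precisely what is guaranteed by Theorems \ref{Theorem 3.9} and \ref{Theorem 3.17}, together with the completeness of $h$ and the fact that the flow $\varphi_t$ is a global $h$-isometry. Once this is in place, no further computation beyond $e^{i\pi}(-z)=z$ is required. $\qedd$
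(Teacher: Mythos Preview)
Your proof is correct and follows essentially the same approach as the paper: use the fact that on the round sphere the $h$-conjugate and $h$-cut locus of $z$ is the antipodal point $-z$, invoke Theorem \ref{Theorem 3.17} and Proposition \ref{cor: F-cut locus} to transfer this to the $F$-geodesics, and then compute $\mathcal P_v(\pi)=e^{i\pi}(-z)=z$. The paper states this argument in a single sentence preceding the proposition, while you have written it out explicitly; the content is the same.
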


\begin{remark}
	Observe that this proposition provides a counter-example to the {first property} of the cut locus of a classical Finsler manifold enumerated in Remark \ref{rem: classical cut locus prop}.
	
\end{remark}


 \subsection{Locally Euclidean spaces}
 
 We will consider now the case of locally Euclidean spaces, that is Riemannian manifolds $(M,h)$ of zero sectional curvature.
 
 In this case it is known that $M$ is the Euclidean space $\R^n$ or is a quotient space $\R^n\setminus \Gamma$ of the Euclidean space by the action of some discrete free isometry group $\Gamma$ (see \cite{W}). In the two dimensional case the cylinder, the torus, the M\"obius band and Klein bottle enters in this category. Observe that our results in Section 3 show that in the case of zero sectional curvature, $M$ admits a strong Kropina metric if and only if $W$ is parallel on $(M,h)$.
 
 We will recall some basics about locally Euclidean spaces.
 
 {\bf 1. The case $(M,h)=(\R^n,\delta)$.}
 
 It is known that the full isometries group of $(\R^n,\delta)$ is isomorphic to the semi-direct product group
 \[
 O(n)\ltimes V^n =\{(A,a):A\in O(n),a\in V^n
 \}
 \]
 of combined rotations and translations. 
Since the vector group $V^n$ of parallel translations in $\R^n$ is a normal subgroup of the isometries group 
 $Isom(\R^n,\delta)$ of $(\R^n,\delta)$ it is natural to define the epimorphism 
 \[
 d:Isom(\R^n,\delta) \to Isom(\R^n,\delta)\slash V^n=O(n).
 \]

 An isometry $f\in Isom(\R^n,\delta)$ can be written as
 \[
 f=(A,a)\in O(n)\ltimes \R^n
 \]
 in the sense that $f:\R^n\to \R^n$, $x\mapsto f(x)=A\cdot x+a$.
 
 It is also known that an arbitrary Killing field on   $(\R^n,\delta)$ is given by $X=(C,w)$, where $w\in \R^n$ and $C$ is a skew-symmetric $n\times n$ matrix. Recall that Proposition 9 in \cite{BN1} tells us that the Killing field $X=(C,w)$ is invariant under the isometry $f=(A,a)$ if and only if
 \begin{equation}\label{invar_cond_Killing}
 \begin{cases}
 [A,C](x)=0,\textrm{ for all } x\in \R^n, \textrm{ and }\\
 C\cdot a+w=A\cdot w.
 \end{cases}
 \end{equation}
 
 In particular, if $C=0$ (i.e. $X$ is a translation only), then the condition \eqref{invar_cond_Killing} simplifies to
 $A\cdot w=w$.

 {\bf 2. The case $(M,h)=(\R^n\setminus \Gamma,h)$.}
 
 Here $\Gamma\subset Isom(\R^n,\delta)$ is a discrete free isometry subgroup. In this case, the existence of a unit length non-trivial Killing field on $M$ is equivalent to the existence of a non-zero vector $a\in \R^n$, which is invariant under the group $d\Gamma=\{df:f\in \Gamma\}$, where $d$ is the epimorphism defined above  (see \cite{BN1} for details).
 Each such vector field on $M$ is the projection of a parallel vector field $X=(0,a)$ on $\R^n$, where $a$ is invariant under $d\Gamma$ (see Proposition 10 in \cite{BN1} ).
 
 \begin{remark}
 There exist three dimensional compact orientable locally Euclidean manifolds $M$ all of whose Killing fields are trivial.
 \end{remark}
 
 We recall that among locally Euclidean spaces, the spaces of the form $\R^m\times T^l$, where $T^l$ is an $l$-dimensional torus are symmetric Riemannian manifolds, so they cannot admit quasi-regular Killing fields of constant length. 
 
 In the two dimensional case, it is known that the M\"obius band and Klein bottle admit such fields along many other non-homogeneous locally Euclidean spaces. All these admit geodesically connected strong Kropina metric structures. However, the flat cylinder and torus also admit such structures as will be seen.

 In general, a locally Euclidean space $M=\R^n\setminus \Gamma$ has a quasi-regular unit Killing field if and only if there exists an isometry $f=(A,(I-A)b+a)\in \Gamma$, that is $f:M\to M$, $x\mapsto f(x)=A\cdot x+(I-A)b+a$, where $I$ is the identical linear transformation, $a\neq 0$ is invariant under $d\Gamma$, $b$ and $a$ are orthogonal, and $A\neq I$ has finite order. The corresponding Killing field is $d\pi(0,a)$, where $\pi:\R^n\to M$ is the canonical projection  (see Theorem 25 in \cite{BN1}).

 We have 
 \begin{theorem}\label{thm: quasi-regular W for locally euclidean}
 	Let $(M,F)$ be a strong Kropina manifold with navigation data $(h,W)$, where $(M=\R^n\slash\Gamma,h)$ is a locally Euclidean space, $\Gamma$ a free discrete subgroup of $Isom(\R^n,\delta)$, $W$ a  unit Killing field on $(M,h)$, and $p\in M$  a fixed point.
 	\begin{enumerate}[(i)]
 		\item If $W$ is quasi-regular, then $p$ can be joined by an $F$-geodesic to any other point $q\in M$.
 		\item Otherwise, $p$ can be joined by an $F$-geodesic to other point $q\in M$ if and only if there exists $\widetilde{r}\in \pi^{-1}(q)$ such that
 		$\langle \widetilde p\widetilde r,\widetilde W_{\widetilde p}\rangle>0$, where  $\pi:{\R^n\to M=\R^n\slash \Gamma}$ is the canonical projection, $\widetilde{p}$ and $\widetilde{W}$ are corresponding in $\R^n$ to $p$ and $W$. 
 	\end{enumerate}
 \end{theorem}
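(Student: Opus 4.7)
For Part (i), my plan is to apply Theorem \ref{thm: quasi-regular complete} directly. Its three hypotheses are easily checked in this setting: the locally Euclidean metric $h$ is complete because it descends from $(\R^n,\delta)$ under the free properly discontinuous isometric action of $\Gamma$; $W$ is quasi-regular by assumption; and the forward completeness of $(M,F)$ follows from the decomposition formula \eqref{F and h-exponential maps}, since $e_x$ is globally defined by Riemannian Hopf--Rinow and the Killing flow $\varphi_t$ is a globally defined one-parameter group of isometries on the complete manifold $(M,h)$.

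For Part (ii), the strategy is to lift the problem to the universal cover $\R^n$. By the result from \cite{BN1} recalled just before the theorem statement, $W$ is the projection of a parallel vector field $\widetilde W=(0,a)$ on $\R^n$ with $\|a\|=1$ and $a$ invariant under $d\Gamma$; in particular $\widetilde W_{\widetilde p}=a$ and its flow is the translation $\widetilde\varphi_t(\widetilde x)=\widetilde x+ta$. The $h$-geodesics of $M$ lift to straight lines $\widetilde\rho(t)=\widetilde p+tv$ with $\|v\|=1$. Combining these facts with Theorem \ref{Theorem 3.9}, every $F$-unit speed geodesic $\mathcal P$ emanating from $p$ lifts to
\[
\widetilde{\mathcal P}(t)=\widetilde p+t(v+a),\qquad v\ne -a,
\]
the condition $v\ne -a$ being precisely the $F$-admissibility required by Lemma \ref{lem: non-degenerate curve}.

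Thus the existence of an $F$-geodesic from $p$ to $q$ of length $l>0$ is equivalent to the existence of a lift $\widetilde r\in\pi^{-1}(q)$ and a unit vector $v\ne -a$ with $\widetilde p\widetilde r=l(v+a)$. The necessary condition follows at once by taking the $\delta$-inner product with $a$:
\[
\langle\widetilde p\widetilde r,\widetilde W_{\widetilde p}\rangle=l\bigl(1+\langle v,a\rangle\bigr)>0,
\]
since $v\ne -a$ and $\|v\|=\|a\|=1$ imply $\langle v,a\rangle>-1$. For the converse, given $\widetilde r\in\pi^{-1}(q)$ with $\langle\widetilde p\widetilde r,\widetilde W_{\widetilde p}\rangle>0$, I would solve the system $\widetilde p\widetilde r=l(v+a)$, $\|v\|=1$ explicitly: it forces
\[
l=\frac{\|\widetilde p\widetilde r\|^2}{2\langle\widetilde p\widetilde r,\widetilde W_{\widetilde p}\rangle}>0,\qquad v=\frac{\widetilde p\widetilde r}{l}-a,
\]
and a direct check confirms $\|v\|=1$, with $v\ne -a$ guaranteed by $\widetilde r\ne \widetilde p$. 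Projecting $\widetilde{\mathcal P}(t)=\widetilde p+t(v+a)$, $t\in[0,l]$, yields the required $F$-geodesic from $p$ to $q$.

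The heart of the argument is the lifting step: once $\widetilde W$ is recognized as a constant Euclidean vector, the entire characterization reduces to a single algebraic identity relating $\widetilde p\widetilde r$, $a$, the parameter $l$ and the initial direction $v$. The only technical point requiring care is the verification of $F$-admissibility for the reconstructed direction in the converse, which is automatic from $\widetilde r\ne\widetilde p$; no new analytic tool is required beyond Theorem \ref{Theorem 3.9} and the structural result for unit Killing fields on locally Euclidean manifolds.
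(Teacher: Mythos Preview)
Your proposal is correct. The paper in fact does not supply a detailed proof of this theorem; it is stated as a direct consequence of the surrounding material, namely Theorem~\ref{thm: quasi-regular complete} for part~(i) and the structural description of unit Killing fields on locally Euclidean spaces (from \cite{BN1}) together with the explicit Euclidean computation of \S\ref{subsec: Euclidean space} for part~(ii). Your argument fills in precisely these details, and the explicit formula $l=\dfrac{\|\widetilde p\widetilde r\|^2}{2\langle\widetilde p\widetilde r,\widetilde W_{\widetilde p}\rangle}$ you derive is exactly the one the paper obtains afterwards in the cylinder and flat-torus examples (equations \eqref{s_0 on straight cylinder} and \eqref{distance on the flat torus}).
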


We remark that in this case, since $W$ is parallel, $W^\perp$ is integrable distribution 
(see Proposition \ref{integrable_distrib}).

 We will consider in the following some special locally Euclidean spaces.
 \subsubsection{The Euclidean space}
 
 We have already introduced this case in Subsection \ref{subsec: Euclidean space}. We only summarize here the results.
 
 \begin{proposition}
 	Let $(\R^n,F)$ be the strong Kropina manifold with navigation data $(h,W)=(\delta, C)$, where $\delta$ is the canonical Euclidean metric and $C=(C^1,C^2,\dots,C^n)$. For any pair of points $p,q\in \R^n$, we can join $p$ to $q$ by an $F$-geodesic if and only if $q\in \mathcal D_p^+:=\{x\in \R^n:\langle x-p,C \rangle >0 \}$, and by an $F$-geodesic from $q$ to $p$ if and only if $q\in \mathcal D_p^-:=\{x\in \R^n:\langle x-p, C \rangle <0 \}$. If $q\in \{x\in \R^n:\langle x-p, C \rangle =0 \}$, then we cannot join $p$ to $q$ nor $q$ to $p$ by any $F$-geodesic.
 \end{proposition}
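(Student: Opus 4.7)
The plan is to reduce everything to the explicit formula for $F$-geodesics in Euclidean space that was already derived in Subsection 6.1, and then to solve an elementary scalar equation. By Theorem \ref{Theorem 3.9}, any $F$-unit speed geodesic on $(\R^n, F)$ with navigation data $(\delta, C)$ is of the form $\mathcal P(t) = \varphi_t(\rho(t))$ where $\rho(t) = p + at$ is a unit-speed Euclidean geodesic with $a \neq -C$, and $\varphi_t$ is the translation flow by $tC$. Substituting gives $\mathcal P(t) = p + (a+C)t$, so any point reachable from $p$ via an $F$-geodesic on $[0,T]$ has the form $q = p + (a+C)T$ for some $T>0$ and some unit vector $a \neq -C$.

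For the forward implication, I would simply compute $\langle q - p, C\rangle = T(\langle a,C\rangle + 1)$. Since $a, C$ are both unit vectors with $a\neq -C$, Cauchy--Schwarz gives $\langle a,C\rangle > -1$, whence $\langle q-p,C\rangle > 0$, so $q\in \mathcal D_p^+$.

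For the converse, given $q\in \mathcal D_p^+$, I need to exhibit the data $(a,T)$ that produce the equation $p + (a+C)T = q$. Writing $aT = (q-p) - CT$ and imposing $|a|=1$ leads to $T^2 = |q-p-CT|^2 = |q-p|^2 - 2T\langle q-p,C\rangle + T^2$, which collapses to the unique positive solution
\begin{equation*}
T = \frac{|q-p|^2}{2\langle q-p,C\rangle}.
\end{equation*}
This $T$ is well-defined precisely because $\langle q-p,C\rangle > 0$. Setting $a = (q - p - CT)/T$ gives an $h$-unit vector, and $a \neq -C$ since otherwise $q - p = 0$, which contradicts $\langle q-p, C\rangle > 0$. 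So the curve $\mathcal P(t) = p+(a+C)t$, $t\in [0,T]$, is an $F$-admissible geodesic from $p$ to $q$. Alternatively, this is just the fixed-point condition of Proposition \ref{prop: fix point characterization} for the map $\delta(\tau) = |q - p - \tau C|$, whose only positive fixed point is the $T$ above.

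The statement about $q\mapsto p$ follows immediately by swapping roles: $q$ can be joined to $p$ iff $\langle p-q, C\rangle > 0$, i.e. $q\in \mathcal D_p^-$. Finally, if $\langle q-p, C\rangle = 0$ and $q \neq p$, neither $T = |q-p|^2/(2\langle q-p,C\rangle)$ nor the analogous expression for the reverse direction is defined (the denominator vanishes while the numerator does not), so no $F$-geodesic exists in either direction. The only potential obstacle in this approach is verifying the admissibility condition $a\neq -C$ at the constructed solution, but as noted this is forced automatically by the strict inequality $\langle q-p,C\rangle > 0$, so there is no real difficulty.
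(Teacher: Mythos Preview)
Your proof is correct and follows essentially the same approach as the paper: both rely on the explicit formula $\mathcal P(t)=p+(a+C)t$ derived in Subsection~6.1 and the observation that $\langle a,C\rangle+1>0$ whenever $a\neq -C$. Your converse direction is in fact more explicit than the paper's treatment there---you solve for $T=\frac{|q-p|^2}{2\langle q-p,C\rangle}$ and verify admissibility directly, whereas the paper simply summarizes the conclusions; this same formula for $T$ does appear later in the paper (in the cylinder example), so your argument is fully in line with the paper's methods.
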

 
 \begin{remark}
 	It is trivial to see that in this case, the cut locus of a point $p\in M$ is empty. 
 	
 \end{remark}
 
 \subsubsection{The cylinder $M=\Sph^1\times \R^n$}
 
 {\bf 1. The Riemannian flat cylinder}
 
 Let us consider the flat (straight) cylinder $M=\Sph^1\times \R$ endowed with the canonical metric $h$ induced from $(\R^3,\delta)$.
 
 We parametrize the surface  $M=\Sph^1\times \R$ by
 \[
 M=\{(\cos u, \sin u, v)\in \R^3 : 0\leq u \leq 2\pi, v\in \R \},
 \]
 with local coordinates $(u,v)$. Using the canonical embedding
\[
x=\cos u,\ y=\sin u,\ z=v
\]
we obtain the induced Riemannian metric $ds^2=(dx)^2+(dy)^2+(dz)^2=(du)^2+(dv)^2$. Hence the induced metric is
the usual flat metric
$h=\begin{pmatrix}
1 &0 \\ 0 & 1
\end{pmatrix}
$.

We recall that the curves
\begin{enumerate}[(i)]
	\item $\{v=v_0=constant\}$ 
                    are called {\it parallels}. They are unit circles in $\R^3$;
	\item $\{u=u_0=constant\}$ 
                          are called {\it meridians}. They are straight lines in  $\R^3$.
\end{enumerate}

Since the metric $h$ is flat, the $h$-geodesic equations are
\begin{equation}\label{rho for flat cylinder}
\rho(s)=(a^1s+p^1,a^2s+p^2),
\end{equation}
where $p=(p^1,p^2)$ is the initial point of $\rho$ and $\dot \rho=a=(a^1,a^2)$ is the initial direction. The unit length parametrization gives the condition $(a^1)^2+(a^2)^2=1$. Regarded as curves in $\R^3$, these $h$-geodesics are the helices
\[
\rho(s)=(\cos(a^1s+p^1), \sin(a^1s+p^1),a^2s+p^2) =(e^{(a^1s+p^1)i},a^2s+p^2).
\]

It is in many cases useful to treat geodesics on $M$ by using its universal covering $\pi:\widetilde{M}\to M$, $(\widetilde{M}:=\R\times \R,\widetilde{h})$, where the $\widetilde{h}$-geodesics are straight lines.


\bigskip

{\bf 2. Kropina metrics on the straight cylinder}

With same notations as above, we will consider the strong Kropina metric $(M,F)$ on the straight cylinder with navigation data $(h,W)$, where $W$ is the {unit} Killing field $W(u,v)=A\frac{\partial}{\partial u}|_{(u,v)}+B \frac{\partial}{\partial v}|_{(u,v)}$, where $A$, $B$ are real constants, $A^2+B^2=1$.
The induced flow is
$
\varphi_t(u,v)=(u+At,v+Bt),
$
 $t\in \R$. Regarded as a curve in $\R^3$, the flow reads $\varphi_t(\cos u, \sin u, v)=\varphi_t(e^{iu},v)=(e^{i(u+At)},v+Bt)$.
 
 If $\rho:(-\varepsilon, \varepsilon)\to M$, given by \eqref{rho for flat cylinder}, is an $h$-unit geodesic, then the corresponding Kropina geodesic is given by
 \begin{equation}\label{F-geod on flat cylinder}
 \mathcal P(s)=\varphi _s(\rho(s))=((a^1+A)s+p^1, (a^2+B)s+p^2),\quad (a^1,a^2)\neq (-A,-B).
 \end{equation}
 Obviously, this is an $F$-unit speed geodesic.

 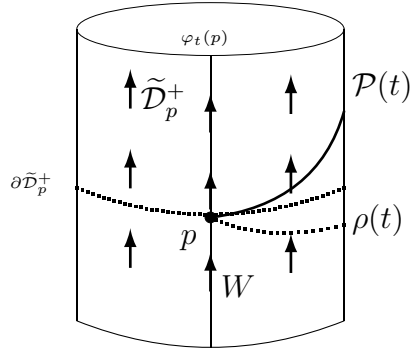
\begin{figure}[H]
 	\begin{picture}(200,140)
 	
 	\put(200, 130){\ellipse{100} {20}}
 	\put(150,20){\line(0,1){110}}
 	\put(250, 20){\line(0,1){110}}
 	\put(150,20){\qbezier(0, 0) (50, -20) (100,0)}
 	{\linethickness{1pt}
 		\put(150,40){\qbezier[50](0, 30) (50, 10) (100,30)}}
 	\put(200, 10){\line(0, 1){110}}
 	\put(125,70){{\tiny $\partial\widetilde{\mathcal D}_p^+$}}
 	\put(170, 100){ $\widetilde{\mathcal D}_p^+$}
 	\linethickness{1pt}
 	\put(170, 40){\vector(0,1){15}}
 	\put(200,31){\vector(0,1){15}}
 	\put(230, 38){\vector(0,1){15}}
 	\put(170, 70){\vector(0,1){15}}
 	\put(200,62){\vector(0,1){15}}
 	\put(230, 68){\vector(0,1){15}}
 	\put(170, 100){\vector(0,1){15}}
 	\put(200,91){\vector(0,1){15}}
 	\put(230, 98){\vector(0,1){15}}
 	\put(185,49){ $p$}
 	\put(200,29){  $W$}
 	\put(200,59){\circle*{4}}
 	\put(185,125){ {\tiny $\varphi_t(p)$}}
 	
 	\put(200,59){\qbezier[20](0, 0) (20, -10) (50,-3)}
 	\put(253, 55){$\rho(t)$}
 	\put(200,59){\qbezier(0, 0) (40, 5) (50,40)}
 	\put(253, 105){$\mathcal P(t)$}
 	\end{picture}
 	\caption{The unit Killing vector field $W=(0,1)$ generated by a parallel translation along the meridians.}\label{fig: W^H-geodesics on cylinder}
 \end{figure}%

 In the universal covering we have $W=(A,B)$ and $W^\perp=(-B,A)$ (as vector field), i.e. $W\perp W^\perp$, and hence, for any $h$-geodesic $\rho(s)$ with $\dot{\rho}(0)\neq -W$, the corresponding Kropina geodesic $\mathcal P(s)=\varphi_s(\rho(s))$ must stay in the half plane, determined by the straight line $W^\perp$, that contains $W$.

 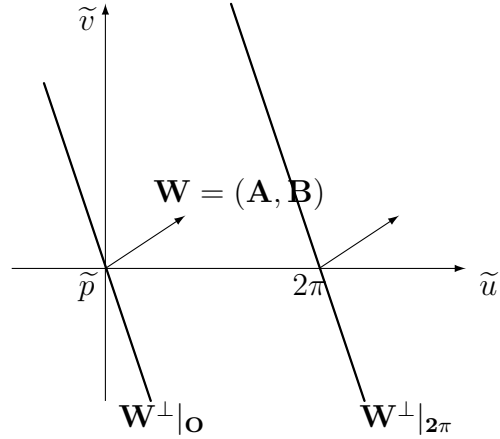
\begin{figure}[H]
 	\begin{picture}(200,180)
 	\put(155,10){\vector(0,1){150}}
 	\put(120,60){\vector(1,0){170}}
 	\put(250, 0){${\bf W^\perp |_{2\pi}}$}
 	\put(235,60){\vector(3,2){30}}
 	\put(155,60){\vector(3,2){30}}
 		\put(145,50){$\widetilde{p}$}
 			\put(225,50){$2\pi$}
 				\put(295,50){$\widetilde{u}$}
 				\put(145, 150){$\widetilde{v}$}
 	\put(172,10){\line(-1, 3){40}}
 	\put(173, 85){${\bf W=(A,B)}$}
 	\put(160, 0){${\bf W^\perp |_{O}}$}
 	\put(252,10){\line(-1, 3){50}}
 	\thicklines
 	\put(252,10){\line(-1, 3){50}}
 	\put(172,10){\line(-1, 3){40}}
 	\end{picture}
 	\caption{$W$ and $W^\perp$ on the universal covering $\widetilde M$ of the straight cylinder.}\label{figure 4}
 \end{figure}%
 
 Remark that
 \begin{enumerate}[(i)]
 	\item the case $(A,B)=(0,1)$
                            gives a strong Kropina manifold generated by a Killing field $W$ generated by a translation along meridians (see Figure \ref{fig: W^H-geodesics on cylinder});
 	\item the case $(A,B)=(1,0)$
                             gives a strong Kropina manifold generated by a Killing field $W$ generated by a  rotation along parallels (see Figure \ref{fig: W^S-geodesics on cylinder}).
 \end{enumerate}

 \begin{figure}[H]
 	\begin{picture}(200,140)
 	\put(200, 130){\ellipse{100} {10}}
 	\put(150,20){\line(0,1){110}}
 	\put(250, 20){\line(0,1){110}}
 	\put(150,20){\qbezier(0, 0) (50, -20) (100,0)}
 	\linethickness{1pt}
 	\put(150, 20){\qbezier[80](0, 0) (40, -10) (100,20)}
 	\put(150, 60){\qbezier[80](0, 0) (40, -10) (100,20)}
 	\put(150, 100){\qbezier[80](0, 0) (40, -10) (100,20)}
 	\put(150, 20){\vector(4,-1){50}}
 	\put(170, 17){\vector(1,0){50}}
 	\put(194,20){\vector(4,1){50}}
 	\put(240, 35){\vector(3,2){20}}
 	\put(150, 60){\vector(4,-1){50}}
 	\put(170, 57){\vector(1,0){50}}
 	\put(194,60){\vector(4,1){50}}
 	\put(240, 75){\vector(3,2){20}}
 	\put(150, 100){\vector(4,-1){50}}
 	\put(170, 97){\vector(1,0){50}}
 	\put(194,100){\vector(4,1){50}}
 	\put(240, 115){\vector(3,2){20}}
 	\put(230,60){$\varphi_t(p)$}
 	\put(170,45){$p$}
 	\put(175,57){\circle*{4}}
 	\put(200,42){$\widetilde{W}_p$}
 	\put(220,5){$\rho_+(t)$}
 	\put(120,80){$\rho_-(t)$}
 	\put(170, 45){\qbezier[80](-20,50) (0, 2) (60,-32)}
 	
 	\put(175, 57){\qbezier[30](0, 0) (28, 35) (35,70)}
 	\put(195, 130){$\rho(t)$}
 	\put(175, 57){\qbezier(0, 0) (57, 35) (65,70)}
 	\put(230, 130){$\mathcal P(t)$}
 	\end{picture}
 	\caption{The unit Killing vector field $W$ generated by a spiral rotation}
 	\label{fig: W^S-geodesics on cylinder}
 \end{figure}
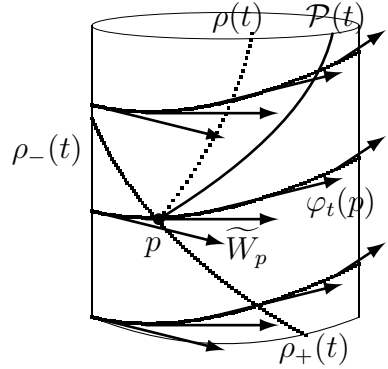%
 

 We obtain
 
 \begin{proposition}\label{Prop 8.12}
 	Let $\widetilde{p}=(0,0)\in \widetilde M$, and $\widetilde{q}\in \widetilde{M}$, $\widetilde{q}\neq \widetilde{p}$. Then
 	\begin{enumerate}[(i)]
 		\item If $B\neq 0$, then
 		\begin{enumerate}[(a)]
 			\item if $\widetilde{q}\in \mathcal{D}^+_{\widetilde{p}}=\{\widetilde{x}=(\widetilde{u},\widetilde{v})\in  \widetilde{M}:A\widetilde{u}+B\widetilde{v}>0 \}$, then there exists an $F$-geodesic from $\widetilde{p}$ to $\widetilde{q}$;            

 			\item if $\widetilde{q}=(\widetilde{u},\widetilde{v})\in W^\perp_{{\tilde{p}}}$, then there is no $F$-geodesic from $\widetilde{p}$ to      $\widetilde{q}$, nor from $\widetilde{q}$ to $\widetilde{p}$;

                    \item  if $\widetilde{q}\in \mathcal{D}^-_{\widetilde{p}}=\{\widetilde{x}=(\widetilde{u},\widetilde{v})\in 		      \widetilde{M}:A\widetilde{u}+B\widetilde{v}<0 \}$, then there is no $F$-geodesic     
                    from $\widetilde{p}$ to      $\widetilde{q}$, but there exists an $F$-geodesic from $\widetilde{q}$ to $\widetilde{p}$.

 		\end{enumerate}
 		\item If $B=0$, then there exists an $F$-geodesic from $\widetilde{p}$ to $\widetilde{q}$, for any $\widetilde{q}\in \mathcal{D}^+_{\widetilde{p}}=\{\widetilde{x}=(\widetilde{u},\widetilde{v})\in \widetilde{M}:\widetilde{u}>0 \}$.

 	\end{enumerate}
 	
 \end{proposition}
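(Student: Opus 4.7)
The plan is to reduce everything to the explicit geodesic formula \eqref{F-geod on flat cylinder} on the universal cover $\widetilde{M}=\R^2$. Writing $\widetilde{p}=(0,0)$, $\widetilde{q}=(\widetilde{u},\widetilde{v})$, and $W=(A,B)$, every $F$-unit speed geodesic issuing from $\widetilde{p}$ has the form
\[
\mathcal P(s)=s\,(a+W),\qquad |a|_h=1,\quad a\neq -W,\quad s\ge 0.
\]
Because $\widetilde{M}$ is flat and $W$ is constant, these are literally straight rays in $\R^2$; in particular no periodicity issues interfere with the argument on the cover.

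The heart of the proof is the following characterization: \emph{the set of points reachable from $\widetilde{p}$ by an $F$-geodesic is precisely the open half-plane $\mathcal{D}^+_{\widetilde{p}}=\{y\in\R^2:\langle y,W\rangle_h>0\}$.} The inclusion in one direction is an instant computation, since $\langle a+W,W\rangle_h=\langle a,W\rangle_h+1>0$ whenever $a\neq -W$, and this inequality is preserved along the straight ray. For the reverse inclusion, given $\widetilde{q}\in\mathcal{D}^+_{\widetilde{p}}$ I would solve $\widetilde{q}=s_0(a+W)$ with $|a|_h=1$ explicitly: eliminating $a$ from $a=\widetilde{q}/s_0-W$ and the unit-length condition yields
\[
s_0=\frac{|\widetilde{q}|_h^{\,2}}{2\langle \widetilde{q},W\rangle_h},
\]
which is strictly positive exactly when $\langle \widetilde{q},W\rangle_h>0$, producing the required $F$-geodesic from $\widetilde{p}$ to $\widetilde{q}$.

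With this characterization in hand, the statement follows immediately. In case $(i)(a)$, the condition $A\widetilde{u}+B\widetilde{v}>0$ is exactly $\langle\widetilde{q},W\rangle_h>0$, giving existence. In case $(i)(b)$, any $F$-geodesic from $\widetilde{p}$ satisfies $\langle \mathcal P(s),W\rangle_h>0$ for $s>0$ and hence cannot meet $W^{\perp}_{\widetilde{p}}$; applying the same reasoning at $\widetilde{q}$ shows that $\widetilde{p}$ is also unreachable from $\widetilde{q}$, since $\langle \widetilde{p}-\widetilde{q},W\rangle_h=-(A\widetilde{u}+B\widetilde{v})=0$. In case $(i)(c)$, nonexistence of the geodesic from $\widetilde{p}$ to $\widetilde{q}$ is the same argument, while existence of the geodesic from $\widetilde{q}$ to $\widetilde{p}$ comes from reapplying the reachability criterion at $\widetilde{q}$: the condition becomes $\langle \widetilde{p}-\widetilde{q},W\rangle_h=-(A\widetilde{u}+B\widetilde{v})>0$, which is precisely $\widetilde{q}\in\mathcal{D}^-_{\widetilde{p}}$. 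Finally, $(ii)$ is just $(i)(a)$ specialized to $B=0$.

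Since the whole argument is an elementary algebraic consequence of the explicit formula in a flat geometry, there is no real obstacle; the only delicate point is checking that the formula for $s_0$ indeed gives an admissible unit vector $a$ (automatic from the derivation) and that the non-existence in $(i)(b)$ is symmetric, which is transparent once one notes that both conditions $\langle\pm(\widetilde{q}-\widetilde{p}),W\rangle_h>0$ fail simultaneously on $W^\perp_{\widetilde{p}}$.
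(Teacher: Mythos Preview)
Your argument is correct and self-contained. The paper's own proof is quite brief: it simply invokes Theorem~\ref{thm: quasi-regular W for locally euclidean}, distinguishing the case $B\neq 0$ (where the trajectories of $W$ on the cylinder are not closed, so part~(ii) of that theorem applies) from the case $B=0$ (where $W$ is quasi-regular on the cylinder, so part~(i) applies). In other words, the paper treats Proposition~\ref{Prop 8.12} as an instance of the general locally Euclidean result, while you bypass that abstraction and argue directly from the explicit geodesic formula~\eqref{F-geod on flat cylinder} on $\widetilde M=\R^2$.

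Your route is in fact the content underlying Theorem~\ref{thm: quasi-regular W for locally euclidean}(ii) specialized to this situation, and your closed-form solution $s_0=|\widetilde q|_h^{2}/(2\langle \widetilde q,W\rangle_h)$ is exactly the computation the paper carries out immediately afterward in Proposition~8.14. So the two proofs are not genuinely different in spirit; yours is more explicit and does not require the reader to unwind the earlier theorem, at the cost of repeating a calculation the paper does elsewhere. One minor remark: your observation that case~(ii) is ``just (i)(a) specialized to $B=0$'' is entirely correct on the cover $\widetilde M$; the paper separates the two cases only because the distinction matters downstream on the quotient $\Sph^1\times\R$, not because the argument on $\widetilde M$ changes.
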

 
 Indeed, in the case $B\neq 0$, trajectories of $W$ are not closed curves and the conclusion follows from Theorem \ref{thm: quasi-regular W for locally euclidean}, (ii). 
 
 If $B=0$, we have $|A|=1$, that is the flow of $W$ is a rotation along the parallels.  In this case, all trajectories of $W$ are closed curves on the surface of the straight cylinder and hence the conclusion follows from Theorem \ref{thm: quasi-regular W for locally euclidean}, (i).
 
{
One should be careful at the following points when inducing the properties of  $F$-geodesics on the canonical strong Kropina space $(\Sph^1\times \R, F)$ from those of $F$-geodesics on the universal covering space $\widetilde{M}$.
Even though on the universal covering space $\widetilde{M}$, the statement (i), (b) in Proposition \ref{Prop 8.12} holds, on $(\Sph^1\times \R, F)$ the following two cases occur.

For a point $p\in \Sph^1\times \R$,
\begin{enumerate}[(i)]
\item if $|B|=1$, for any point $q\in \pi(W_{\widetilde{p}}^\perp)$ there is no $F$-geodesic from $p$ to $q$.
\item if $|B|\ne 1$, for any point $q\in \pi(W_{\widetilde{p}}^\perp)$ there are infinitely many  $F$-geodesics from $p$ to $q$.
\end{enumerate}

}

 \begin{corollary}
 		Let $(\Sph^1\times \R ,F)$ be the canonical Kropina manifold constructed above with navigation data $(h,W=(A,B))$. 
 	\begin{enumerate}[(i)]
 		\item If {$|B|\ne 1$},
then $(\Sph^1\times \R     ,F)$ is geodesically connected, i.e. we can join any two points on  $\Sph^1\times \R$ by an $F$-geodesic segment. 
 			\item If  {$|B|=1$},
then there are points on $\Sph^1\times \R$ that cannot be joined by an $F$-geodesic. 
 	\end{enumerate}	
 \end{corollary}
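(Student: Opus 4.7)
The plan is to lift the problem to the universal cover $\pi\colon\widetilde M=\R^2\to\Sph^1\times\R$, $\pi(\widetilde u,\widetilde v)=(e^{i\widetilde u},\widetilde v)$, which is a local isometry sending $h$ to the flat metric and $W$ to the constant field $\widetilde W=(A,B)$. By translation invariance I may assume $\widetilde p=(0,0)$; the fiber $\pi^{-1}(q)$ is then the discrete set $\{\widetilde q_k:=\widetilde q_0+(2\pi k,0):k\in\mathbb{Z}\}$ for any chosen reference lift $\widetilde q_0\in\pi^{-1}(q)$. Since $\pi$ is a local isometry intertwining $W$ with $\widetilde W$, both $F$-admissibility and the geodesic equation descend under $\pi$, so an $F$-geodesic from $p$ to $q$ on the cylinder exists if and only if $\widetilde p$ can be joined to some $\widetilde q_k$ by an $F$-geodesic on $\widetilde M$. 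By Proposition~\ref{Prop 8.12} (equivalently, the Euclidean analysis of Subsection~\ref{subsec: Euclidean space} applied to $\widetilde M$), this happens precisely when
\[
\Phi(k):=A\bigl(\widetilde q^{\,1}_0+2\pi k\bigr)+B\,\widetilde q^{\,2}_0>0
\]
for at least one $k\in\mathbb{Z}$.

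For part (i), I would use that $|B|\ne 1$ forces $A\ne 0$, so $\Phi$ is a non-constant affine function of $k$ with nonzero slope $2\pi A$; its range over $\mathbb{Z}$ is unbounded above, so there is some $k$ with $\Phi(k)>0$. Formula~\eqref{F-geod on flat cylinder} then supplies an $F$-geodesic from $\widetilde p$ to the corresponding $\widetilde q_k$, and its $\pi$-projection is the desired $F$-geodesic from $p$ to $q$.

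For part (ii), $|B|=1$ forces $A=0$, so $\Phi(k)=B\,\widetilde q^{\,2}_0$ is independent of $k$. Equivalently, reading off the second coordinate in \eqref{F-geod on flat cylinder} along any $F$-geodesic of $F$-length $l$ from $p$ to $q$ gives $q^2-p^2=l(a^2+B)$, and the conic admissibility condition $(a^1,a^2)\ne(0,-B)$ together with $(a^1)^2+(a^2)^2=1$ forces $a^2\ne-B$ with $|a^2|\le 1$, hence $B(a^2+B)=Ba^2+1>0$ strictly. So $B(q^2-p^2)>0$ is a necessary condition for connectedness, and any $q\ne p$ on the same parallel as $p$ (i.e.\ $q^2=p^2$) furnishes a counterexample. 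I expect the main delicate point to be just this strict inequality $B(a^2+B)>0$, which is precisely the consequence of excluding the single direction $(0,-B)$ from the conic domain $A_p$ — the very same mechanism already observed in the Euclidean example of Subsection~\ref{subsec: Euclidean space}.
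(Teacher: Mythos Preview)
Your proposal is correct and follows essentially the same approach as the paper: lift to the universal cover $\widetilde M=\R^2$, invoke Proposition~\ref{Prop 8.12} (equivalently Theorem~\ref{thm: quasi-regular W for locally euclidean}(ii)) to reduce the question to whether some lift $\widetilde q_k$ satisfies $\langle\widetilde p\widetilde q_k,\widetilde W\rangle>0$, and then observe that this affine function of $k$ has nonzero slope precisely when $A\ne 0$, i.e.\ $|B|\ne 1$. Your explicit write-up of $\Phi(k)$ and the direct second-coordinate argument for part~(ii) make transparent what the paper leaves to the remark immediately preceding the Corollary.
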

 
In the case there is an $F$-geodesic { $\tilde{\mathcal{P}}$} from $\widetilde{p}$ to $\widetilde{q}$, we can determine explicitly the initial conditions for ${\widetilde{\mathcal{P}}}$.

\begin{proposition}
	Let $(M,F)$ be a strong Kropina metric on the cylinder $\Sph^1\times \R$ with navigation data $(h,W)$, and let  ${\widetilde{p}}=(p^1,p^2)$, ${\widetilde{q}}=(q^1,q^2)\in M$
{be} two points that can be joined by an $F$-geodesic { $\widetilde{\mathcal{P}}$}. Then {$\widetilde{\mathcal{P}}$} has the initial conditions
	\begin{equation}
	\begin{cases}
	{ \widetilde{\mathcal{P}}}(0)={ \widetilde{p}},\\
	{  \dot{\widetilde{\mathcal{P}}}   (0)=\frac{2\langle W_{\widetilde{p}},\widetilde{p}\widetilde{q}\rangle}{||\widetilde{p}\widetilde{q}||_h^2}\cdot \widetilde{p}\widetilde{q},  }
	\end{cases}
	\end{equation}
	where ${\widetilde{p}\widetilde{q}}:=(q^1-p^1,q^2-p^2)$.
\end{proposition}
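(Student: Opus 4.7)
The plan is to work on the universal cover $\widetilde{M} = \R^2$, where the $F$-geodesics of the flat Kropina cylinder are explicitly given by the straight-line formula \eqref{F-geod on flat cylinder}, namely
\[
\widetilde{\mathcal{P}}(s) = \widetilde{p} + (a+W)\,s, \qquad a=(a^1,a^2),\ |a|_h=1,\ a\ne -W.
\]
By hypothesis there exists an $F$-geodesic joining $\widetilde{p}$ to $\widetilde{q}$, so there is a parameter value $s_0>0$ with $\widetilde{\mathcal{P}}(s_0)=\widetilde{q}$. Taking the derivative at $s=0$ and using $\widetilde{\mathcal{P}}(s_0)=\widetilde{q}$ gives the key identity
\[
\dot{\widetilde{\mathcal{P}}}(0) \;=\; a+W \;=\; \frac{\widetilde{q}-\widetilde{p}}{s_0} \;=\; \frac{\widetilde{p}\widetilde{q}}{s_0},
\]
so the whole problem reduces to determining the scalar $s_0$.

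To pin down $s_0$ I would invoke Remark \ref{rem: equiv}, which states that the $F$-unit speed condition $F(\widetilde{\mathcal{P}},\dot{\widetilde{\mathcal{P}}})=1$ is equivalent to $|\dot{\widetilde{\mathcal{P}}} - W|_h = 1$. Substituting $\dot{\widetilde{\mathcal{P}}}(0) = \widetilde{p}\widetilde{q}/s_0$ into this relation yields
\[
\Bigl|\,\frac{\widetilde{p}\widetilde{q}}{s_0} - W_{\widetilde{p}}\Bigr|_h^2 \;=\; 1,
\]
and expanding using $|W_{\widetilde{p}}|_h=1$ produces
\[
\frac{|\widetilde{p}\widetilde{q}|_h^{\,2}}{s_0^{2}} \;-\; \frac{2\langle W_{\widetilde{p}},\widetilde{p}\widetilde{q}\rangle}{s_0} \;=\; 0.
\]
Since $s_0>0$, this immediately gives the explicit value
\[
s_0 \;=\; \frac{|\widetilde{p}\widetilde{q}|_h^{\,2}}{2\langle W_{\widetilde{p}},\widetilde{p}\widetilde{q}\rangle}.
\]

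Finally I would substitute this expression for $s_0$ back into $\dot{\widetilde{\mathcal{P}}}(0) = \widetilde{p}\widetilde{q}/s_0$, obtaining the claimed formula
\[
\dot{\widetilde{\mathcal{P}}}(0) \;=\; \frac{2\langle W_{\widetilde{p}},\widetilde{p}\widetilde{q}\rangle}{|\widetilde{p}\widetilde{q}|_h^{\,2}}\,\widetilde{p}\widetilde{q},
\]
together with the trivial initial condition $\widetilde{\mathcal{P}}(0)=\widetilde{p}$. There is no genuine obstacle here: the argument is a direct one-step computation, and the only mild subtlety is remembering that $s_0$ is automatically positive because $\widetilde{q}\in \mathcal{D}^+_{\widetilde{p}}$ (whence $\langle W_{\widetilde{p}},\widetilde{p}\widetilde{q}\rangle>0$), which is guaranteed by the very existence of the joining $F$-geodesic as per Proposition \ref{Prop 8.12}.
$\qedd$
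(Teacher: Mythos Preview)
Your proof is correct and follows essentially the same approach as the paper: both exploit the explicit straight-line form of the lifted $F$-geodesic, use the constraint $|a|_h=1$ (which you package via Remark~\ref{rem: equiv} as $|\dot{\widetilde{\mathcal P}}(0)-W|_h=1$) to solve for $s_0$, and then substitute back. Your version is just a slightly more coordinate-free rendering of the paper's componentwise computation.
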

 \begin{proof}
Taking into account the equation of the $h$- and $F$-geodesics \eqref{rho for flat cylinder} and \eqref{F-geod on flat cylinder}, respectively, observe that {$\widetilde{q}$} is on {$\widetilde{\mathcal{P}}$}, that is there exists a parameter value $s_0$ such that {   $\widetilde{q}=\widetilde{\mathcal{P}}(s_0)$},
if and only if
\begin{equation*}
\begin{cases}
(a^1+A)s_0+p^1=q^1,\\
(a^2+B)s_0+p^2=q^2,
\end{cases}
\end{equation*}
or, equivalently,
 	\begin{equation}\label{a*s formulas}
 	\begin{cases}
 	a^1s_0=q^1-p^1-As_0,\\
 	a^2s_0=q^2-p^2-Bs_0.
 	\end{cases}
 	\end{equation}
 	Taking now into account that $(a^1)^2+(a_2)^2=1$, the formulas \eqref{a*s formulas} imply
 	\[
 	s_0^2=(a^1s_0)^2+(a_2s_0)^2=(q^1-p^1)^2+(q^2-p^2)^2
 	-2\bigl[A(q^1-p^1)+B(q^2-p^2)\bigl]s_0+(A^2+B^2)s_0^2.
 	\]
 Since $W$ is unit, i.e. $	A^2+B^2=1$, the second order term $s_0^2$ reduces and we obtain
 \begin{equation}\label{s_0 on straight cylinder}
 s_0=\frac{(q^1-p^1)^2+(q^2-p^2)^2}{2\bigl[A(q^1-p^1)+B(q^2-p^2)\bigl]},
 \end{equation}
 	or, equivalently, $s_0=\frac{||  {\widetilde{p}\widetilde{q}}||^2}{2\langle W,  {\widetilde{p}\widetilde{q}} \rangle}$, where we regard ${ \widetilde{p}\widetilde{q}}:=(q^1-p^1,q^2-p^2)$ as the vector field with origin {$\widetilde{p}$} and tip {$\widetilde{q}$} in the universal covering $\widetilde{M}=\R^2$.
 	
 	By substituting \eqref{s_0 on straight cylinder} in  \eqref{a*s formulas} we obtain
 	\begin{equation*}
 	\begin{cases}
 	a^1=\frac{q^1-p^1}{s_0}-A=(q^1-p^1)\frac{2\langle W,  { \widetilde{p}\widetilde{q}} \rangle}{||{  \widetilde{p}\widetilde{q}   }||^2}-A,\\
 	a^2=\frac{q^2-p^2}{s_0}-B=(q^2-p^2)\frac{2\langle W,  {   \widetilde{p}\widetilde{q}    } \rangle}{||   {  \widetilde{p}\widetilde{q}    }||^2}-B,
 	\end{cases}
 	\end{equation*}
 	and hence $a=(a^1,a^2)=\frac{2\langle W,  {  \widetilde{p}\widetilde{q}   } \rangle}{|| {  \widetilde{p}\widetilde{q}    }||^2}    {   \widetilde{p}\widetilde{q}   }-W_{\widetilde{p}}$. Remark that is the initial condition for the $h$-geodesic that is deformed by the flow of $W$ into {$\widetilde{\mathcal{P}}$}.
 	
 	Using now that ${    \dot{\widetilde{\mathcal{P}}}}(0)=\dot{\rho}(0)+W_{\widetilde{p}}=a+W_{\widetilde{p}}$, the formulas needed follow immediately.

 	$\qedd$
 \end{proof}
 
 \begin{remark}
 	It is important to remark that $s_0$ in formula \eqref{s_0 on straight cylinder} in the proof above is actually the $F$-distance between $p$ and $q$.
 	
 \end{remark}
 {\bf 3. The cut locus of a strong Kropina metric on the straight cylinder}

We will consider the cut locus of a strong Kropina space with navigation data $(h, W)$ on a straight cylinder.
The vector field $W$ is a unit Killing vector field represented by
\begin{eqnarray}\label{9.8}
W(u, v)=A\frac{\partial}{\partial u}+B\frac{\partial}{\partial v},
\end{eqnarray}
where $A^2+B^2=1$.

We consider the $F$-geodesics on the universal covering space $\widetilde{M}=\R \times \R$ of a straight cylinder.

Fix $\widetilde{p}=(0, 0)$ and consider the cut locus of $F$-geodesics emanating from $\widetilde{p}$.

 A generic unit speed $F$-geodesics $\mathcal{P}(s)$ on a straight cylinder can be written as
\begin{eqnarray*}
  \mathcal{P}(s)=(e^{(a^1+A)si}, (a^2+B)s),  \hspace{0.1in} (a^1, a^2)\ne (-A, -B).
\end{eqnarray*}
From (\ref{F-geod on flat cylinder}),  on  the universal covering space $\widetilde{M}$ this $F$-geodesic is represented by
\begin{eqnarray}\label{9.9}
  \widetilde{\mathcal{P}}(s)=\big((a^1+A)s, (a^2+B)s \big), \hspace{0.1in} (a^1, a^2)\ne (-A, -B).
\end{eqnarray}

\begin{remark}\label{rem: h-dist vs F-dist on cylinder}
	\begin{enumerate}
		\item The $h$-cut locus of $\widetilde{p}=(0,0)$ is the opposite meridian $\{\widetilde{u}=\pi\}$,
                                                      i.e. we can write 
		$\mathcal{C}_{\widetilde{p}}^h=\{(\pi,\widetilde{v}):\widetilde{v}\in \R\}$.     
                   		\item The $h$-distance from $\widetilde{p}$ to a point     $\widetilde{q}=(\pi, \widetilde{v}_0)\in \mathcal{C}_{\widetilde{p}}^h$ is $d_h(\widetilde{p}, \widetilde{q})=\sqrt{\pi^2+\widetilde{v}^2_0}$ 
		                        (the Theorem of Pythagoras). 
	\end{enumerate}
	
\end{remark}

\begin{theorem}Let $(M, F)$ be a strong Kropina space on a straight cylinder $\Sph^1 \times \R$ with navigation data $(h, W)$, where
	$W=A\frac{\partial}{\partial u}+B\frac{\partial}{\partial v}$, and let us denote by $\widetilde{M}$ the universal covering of $M$.
	
	The $F$-cut locus of the point  $\widetilde{p}=(0,0)\in \widetilde{M}$ is 
\begin{equation}\label{F-cut locus on cylinder}
\mathcal{C}_{\widetilde{p}}^F=\{(\pi+A\sqrt{\pi^2+\widetilde{v}^2},\widetilde{v}+B\sqrt{\pi^2+\widetilde{v}^2}):\widetilde{v}\in \R\}
\end{equation}

\end{theorem}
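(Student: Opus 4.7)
The plan is to derive the $F$-cut locus from the well-understood $h$-cut locus by the correspondence of Proposition \ref{cor: F-cut locus}. Recall that any $F$-unit speed geodesic $\widetilde{\mathcal P}$ from $\widetilde{p}$ is the deformation $\widetilde{\mathcal P}(s)=\varphi_s(\widetilde{\rho}(s))$ of an $h$-unit speed geodesic $\widetilde{\rho}$ from $\widetilde{p}$, and by that proposition $\widetilde{\mathcal P}(l)$ is the $F$-cut point along $\widetilde{\mathcal P}$ exactly when $\widetilde{\rho}(l)$ is the $h$-cut point along $\widetilde{\rho}$, at the same parameter value $l$.

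First I would identify $\mathcal{C}^h_{\widetilde{p}}$. On the straight cylinder $\Sph^1\times \R$, the $h$-cut locus of $p=(0,0)$ is the opposite meridian, generated by pairs of shortest $h$-geodesics going around the cylinder in opposite senses. Lifting to $\widetilde{M}=\R^2$ and choosing the representative $\widetilde u=\pi$ gives $\mathcal{C}^h_{\widetilde{p}}=\{(\pi,\widetilde{v}):\widetilde{v}\in\R\}$, and by the Pythagorean theorem the $h$-distance from $\widetilde p$ to $(\pi,\widetilde v)$ is $l=\sqrt{\pi^2+\widetilde{v}^2}$ (Remark \ref{rem: h-dist vs F-dist on cylinder}).

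Second, I would apply the flow at time $l$ to each such point. Since $\varphi_l(u,v)=(u+Al,\,v+Bl)$, the $F$-cut point corresponding to $\widetilde{q}=(\pi,\widetilde{v})$ is
\[
\widetilde{\mathcal P}(l)=\varphi_l(\widetilde{q})=\bigl(\pi+A\sqrt{\pi^2+\widetilde{v}^2},\ \widetilde{v}+B\sqrt{\pi^2+\widetilde{v}^2}\bigr),
\]
and letting $\widetilde{v}$ range over $\R$ yields exactly the set on the right-hand side of \eqref{F-cut locus on cylinder}.

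The main obstacle is handling the fact that each $h$-cut point on the cylinder is hit by \emph{two} distinct $h$-geodesics from $\widetilde p$ (lifting to $(\pi,\widetilde v)$ and $(-\pi,\widetilde v)$ in $\widetilde M$), and verifying that after deformation by $\varphi_l$ these two $F$-geodesics still project to the same point of $M$; this is automatic because $\varphi_l$ commutes with the deck transformations of $\widetilde M\to M$, so both lifts descend to the same $F$-cut point on the cylinder. A secondary subtlety is the $F$-admissibility condition $\dot{\widetilde{\rho}}(0)\ne -W_{\widetilde p}$, which fails only in the degenerate case $(A,B)=(-1,0)$ with $\widetilde v=0$, and is then rescued by picking the other of the two $h$-geodesics (the one going in the $+W$ direction), ensuring that every point in \eqref{F-cut locus on cylinder} is genuinely an $F$-cut point.
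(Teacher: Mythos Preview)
Your proposal is correct and follows essentially the same route as the paper: invoke Proposition~\ref{cor: F-cut locus} to reduce the $F$-cut locus to the $h$-cut locus, identify the latter as the opposite meridian $\{(\pi,\widetilde v):\widetilde v\in\R\}$ with $l=d_h(\widetilde p,(\pi,\widetilde v))=\sqrt{\pi^2+\widetilde v^2}$, and then push forward by the flow $\varphi_l$. Your two closing paragraphs on the double-covering and on $F$-admissibility go beyond what the paper checks; note only that the admissibility failure $\dot{\widetilde\rho}(0)=-W_{\widetilde p}$ can in fact occur for any $A<0$ (at $\widetilde v=\pi B/A$), not just $(A,B)=(-1,0)$, though your remedy of switching to the other $h$-geodesic through $(-\pi,\widetilde v)$ still works in every such case.
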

\begin{proof}
	Since the cut locus of $F$ is obtained by "twisting" the $h$-cut locus by the flow of $W$ (see Corollary \ref{cor: F-cut locus}), we have $\widetilde{q}\in \mathcal{C}_{\widetilde{p}}^F$ if and only if  $\widetilde{q}=\varphi_l(\hat{q})$, where $\hat{q}\in \mathcal{C}_{\widetilde{p}}^h$ and $l=d_h(\widetilde{p},\hat{q})$.
	
	It follows $\widetilde{q}\in \mathcal{C}_{\widetilde{p}}^F$ if and only if   $\widetilde{q}=\varphi_l(\pi, \widetilde{v}_0)=( \pi+A\sqrt{\pi^2+\widetilde{v_0}^2}, \widetilde{v_0}+B\sqrt{\pi^2+\widetilde{v_0}^2})$,
	where we have used $l=\sqrt{\pi^2+\widetilde{v_0}^2}$ 
	(see Remark \ref{rem: h-dist vs F-dist on cylinder}), and the flow action formula. As set points we obtain the formula in the theorem.

	$\qedd$
\end{proof}

Let us consider some special cases in the following.

\begin{corollary}
	Let $(M, F)$ be a strong Kropina space on a straight cylinder $\Sph^1 \times \R$ with navigation data $(h, W)$, where $W=\frac{\partial}{\partial v}$,
	and let     $\widetilde{M}$ be the universal covering space   of $M$. 
	The $F$-cut locus of the point  $\widetilde{p}=(0,0)\in \widetilde{M}$ is 
	\begin{equation}
	\mathcal{C}_{\widetilde{p}}^F=\{(\pi, \widetilde{v}+\sqrt{\pi^2+\widetilde{v}^2} ):\widetilde{v}\in \R\}=\{(\pi, \widetilde{v}); \widetilde{v}>0\}.	
	\end{equation}
	
	In other words, the $F$-cut locus of any point $p\in \Sph^1 \times \R$ is the half meridian through the antipodal point of $p$.

\end{corollary}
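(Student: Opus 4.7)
The plan is to specialize the preceding theorem to the case $A=0$, $B=1$, verify the claimed reparametrization of the cut locus set, and then use the symmetry of the Kropina structure to extend the conclusion to arbitrary basepoints $p\in \Sph^1\times\R$.

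First, since $W=\partial/\partial v$ corresponds to $(A,B)=(0,1)$, formula \eqref{F-cut locus on cylinder} from the preceding theorem immediately yields
\[
\mathcal{C}_{\widetilde{p}}^{F}=\{(\pi,\,\widetilde{v}+\sqrt{\pi^{2}+\widetilde{v}^{2}}):\widetilde{v}\in\R\},
\]
which establishes the first equality in the corollary. Nothing here needs to be reproved from scratch.

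Next I would show the set-theoretic identity
\[
\{\widetilde{v}+\sqrt{\pi^{2}+\widetilde{v}^{2}}:\widetilde{v}\in\R\}=(0,\infty).
\]
Set $f(\widetilde v):=\widetilde v+\sqrt{\pi^{2}+\widetilde v^{2}}$. Since $\lvert \widetilde v\rvert<\sqrt{\pi^{2}+\widetilde v^{2}}$, the derivative $f'(\widetilde v)=1+\widetilde v/\sqrt{\pi^{2}+\widetilde v^{2}}$ is strictly positive, so $f$ is a smooth strictly increasing bijection from $\R$ onto its image. A routine limit calculation (writing $f(\widetilde v)=\pi^{2}/(\sqrt{\pi^{2}+\widetilde v^{2}}-\widetilde v)$ for $\widetilde v<0$) gives $\lim_{\widetilde v\to-\infty}f(\widetilde v)=0$ and $\lim_{\widetilde v\to+\infty}f(\widetilde v)=+\infty$. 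Hence the image is exactly $(0,\infty)$, and the second equality in the corollary follows.

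Finally, to obtain the statement for an arbitrary point $p=(e^{iu_{0}},v_{0})\in \Sph^{1}\times\R$, I would invoke the symmetry of the navigation data. Both the rotation $(e^{iu},v)\mapsto(e^{i(u+u_{0})},v)$ along the parallels and the translation $(e^{iu},v)\mapsto(e^{iu},v+v_{0})$ along the meridians are $h$-isometries that preserve $W=\partial/\partial v$; by the bijection between $F$-geodesics and flow-twisted $h$-geodesics in Theorem \ref{Theorem 3.9}, they are also $F$-isometries, and therefore carry cut loci to cut loci. Composing them one sends $(1,0)$ to $p$ and maps $\{(e^{i\pi},v):v>0\}$ to the half meridian through the antipodal point of $p$ (parametrized, after translation, by $v>v_{0}$ in the meridian direction singled out by the flow of $W$). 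This yields the claim for every $p$.

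The only place where anything nontrivial happens is the monotonicity/limit analysis of $f$; the rest is bookkeeping. I expect no genuine obstacle: the main point to keep straight is that the half-line selected on the antipodal meridian is precisely the one into which the flow of $W$ pushes the $h$-cut locus, which is why only $\widetilde v>0$ (and not all of $\R$) appears.
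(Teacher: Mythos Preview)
Your proposal is correct and follows essentially the same approach as the paper: specialize the preceding theorem to $(A,B)=(0,1)$ and then verify that the map $\widetilde v\mapsto \widetilde v+\sqrt{\pi^2+\widetilde v^2}$ has image $(0,\infty)$. The paper asserts this image computation with a one-line remark on continuity, while you supply the monotonicity and limit details; you also make explicit the translation/rotation symmetry argument for arbitrary basepoints, which the paper leaves implicit in the phrase ``In other words''.
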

Indeed, in this case we have $(A,B)=(0,1)$, that is the flow of $W=\frac{\partial}{\partial v}$ through $p$ is simply a translation along the meridian passing through same point $p$.

Observe that the function $\psi:\R\to \R$, $\psi(x)=x+\sqrt{x^2+\pi^2}$ is continuous function and $\psi(\R)=(0,\infty)$. Therefore, if we denote 
$\psi :=\widetilde{v}+\sqrt{\pi^2+\widetilde{v}^2}$,
then 
\[
\mathcal{C}_{\widetilde{p}}^F=\{(\pi, \psi):\psi\in (0,\infty)\},
\]
that is the $F$-cut locus of $p$ is the opposite half meridian.

\begin{remark}
	\begin{enumerate}
		\item This result is in concordance with Corollary \ref{cor: F-cut locus}.
		\item 	Observe that in this case, the $F$-cut locus of a point $p$ satisfies the properties in Remark \ref{rem: classical cut locus prop} from the classical case. 
	\end{enumerate}
	
\end{remark}
\begin{corollary}	Let $(M, F)$ be a strong Kropina space on a straight cylinder $\Sph^1 \times \R$ with navigation data $(h, W)$, where $W=\frac{\partial}{\partial u}$,
	and let     $\widetilde{M}$ be the universal covering space   of $M$. 
	The $F$-cut locus of the point  $\widetilde{p}=(0,0)\in \widetilde{M}$ is 
\begin{equation}\label{F-cut locus on cylinder (A,B)=(1,0)}
\mathcal{C}_{\widetilde{p}}^F=\{(\pi+\sqrt{\pi^2+\widetilde{v}^2},\widetilde{v}):\widetilde{v}\in \R\}.
\end{equation}
\end{corollary}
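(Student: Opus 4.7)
The plan is to derive this corollary as a direct specialization of the preceding theorem (formula \eqref{F-cut locus on cylinder}) to the case $(A,B)=(1,0)$. First I would verify that $W=\partial/\partial u$ is indeed an $h$-unit Killing vector field on the flat cylinder: since $h=du^2+dv^2$ has constant coefficients in the coordinates $(u,v)$ and $W$ has constant components $(1,0)$, both the unit-length condition $A^2+B^2=1$ and the Killing equations $W_{i|j}+W_{j|i}=0$ are immediate. Hence the navigation data $(h,W)$ define a genuine strong Kropina metric $F$ on $\Sph^1\times\R$, and the previous theorem applies.

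Next I would recall, as in Remark \ref{rem: h-dist vs F-dist on cylinder}, that the $h$-cut locus of $\widetilde p=(0,0)$ on the universal covering $\widetilde M=\R\times\R$ is the opposite meridian $\{(\pi,\widetilde v_0):\widetilde v_0\in\R\}$, with $d_h(\widetilde p,(\pi,\widetilde v_0))=\sqrt{\pi^2+\widetilde v_0^2}$. By Proposition \ref{cor: F-cut locus}, the $F$-cut locus is obtained from the $h$-cut locus by applying the flow $\varphi_t$ at parameter $t=d_h(\widetilde p,\hat q)$ to each $h$-cut point $\hat q$. Since here the flow is simply
\[
\varphi_t(u,v)=(u+t,\,v),
\]
carrying any $\hat q=(\pi,\widetilde v_0)$ to
\[
\varphi_{\sqrt{\pi^2+\widetilde v_0^2}}(\pi,\widetilde v_0)=\bigl(\pi+\sqrt{\pi^2+\widetilde v_0^2},\,\widetilde v_0\bigr),
\]
renaming $\widetilde v_0=\widetilde v$ yields the stated formula \eqref{F-cut locus on cylinder (A,B)=(1,0)}.

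Equivalently, one can just substitute $A=1$, $B=0$ directly into the general formula of the preceding theorem to read off the result without re-deriving it. The proof is essentially a one-line specialization, so no serious obstacle is expected; the only small point to track is that the flow acts purely on the $u$-coordinate, so the second coordinate is unaffected (explaining why $\widetilde v$ appears untwisted in the final formula), and that the first coordinate is strictly greater than $\pi$, so the $F$-cut locus lies entirely in the half-plane $\widetilde u>\pi$ of the universal cover, matching the geometric intuition that the flow "pushes" the $h$-cut locus forward by the $h$-distance traveled.
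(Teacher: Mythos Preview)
Your proposal is correct and follows essentially the same approach as the paper: both derive the corollary by specializing the general formula \eqref{F-cut locus on cylinder} of the preceding theorem to $(A,B)=(1,0)$, with the paper simply noting that the flow of $W=\partial/\partial u$ is a rotation along parallels and that the resulting curve is a hyperbola. Your additional verification that $W$ is unit Killing and your explicit computation of the flow action are sound but not needed beyond what the paper already assumes.
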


Indeed, in this case we have $(A,B)=(1,0)$, 
that is the flow of $W=\frac{\partial}{\partial u}$ through {$\widetilde{p}$} is a rotation along the parallel passing through {$\widetilde{p}$}.

The $F$-geodesics   which are emanating from $\widetilde{p}=(0, 0)$ on  $\widetilde{M}$ are represented by
\begin{eqnarray*}
	\widetilde{\mathcal{P}}(s)=\big((a^1+1)s, a^2s \big), \hspace{0.1in} (a^1, a^2)\ne (-1, 0),
\end{eqnarray*}
and \eqref{F-cut locus on cylinder} gives \eqref{F-cut locus on cylinder (A,B)=(1,0)}, that is a hyperbola (see Figure \ref{fig:The $F$-cut locus in the case $(A,B)=(1,0)$ }).

\begin{figure}[H]
	\begin{picture}(200,270)
	\put(30, 100){\vector(1,0){380}}
	\put(200,10){\vector(0,1){250}}
	\put(205, 250){ {   $\widetilde{v}$}}
	\put(405, 92){  {  $\widetilde{u}$}}
	\put(50,10){\line(0,1){250}}
	\put(200,10){\line(0,1){250}}
	\put(350,10){\line(0,1){250}}
	\put(36,93){{\tiny $-2\pi$}}
	\put(266,93){{\tiny$\pi$}}
	\put(266,100){\circle*{2}}
	\put(341,93){{\tiny$2\pi$}}
	\put(200,100){\vector(1,0){19}}
	\put(205,105){   {\tiny $W$}  }
	
	\put(190,100){{\tiny {  $\widetilde{p}$}}}
	\put(200,100){\circle*{3}}

	\put(125,100){\circle*{2}}
	\put(115,93){  {\tiny $-\pi$}}
	
	
	
	\put(354,100){ \qbezier(100, 100) (-108, 0) (100,-100)}

	
	\put(219,100){\circle{39}}
	\put(219,100){\circle*{2}}

	\end{picture}
	\caption{The $F$-cut locus in the case $(A,B)=(1,0)$.}
	\label{fig:The $F$-cut locus in the case $(A,B)=(1,0)$ }
\end{figure}
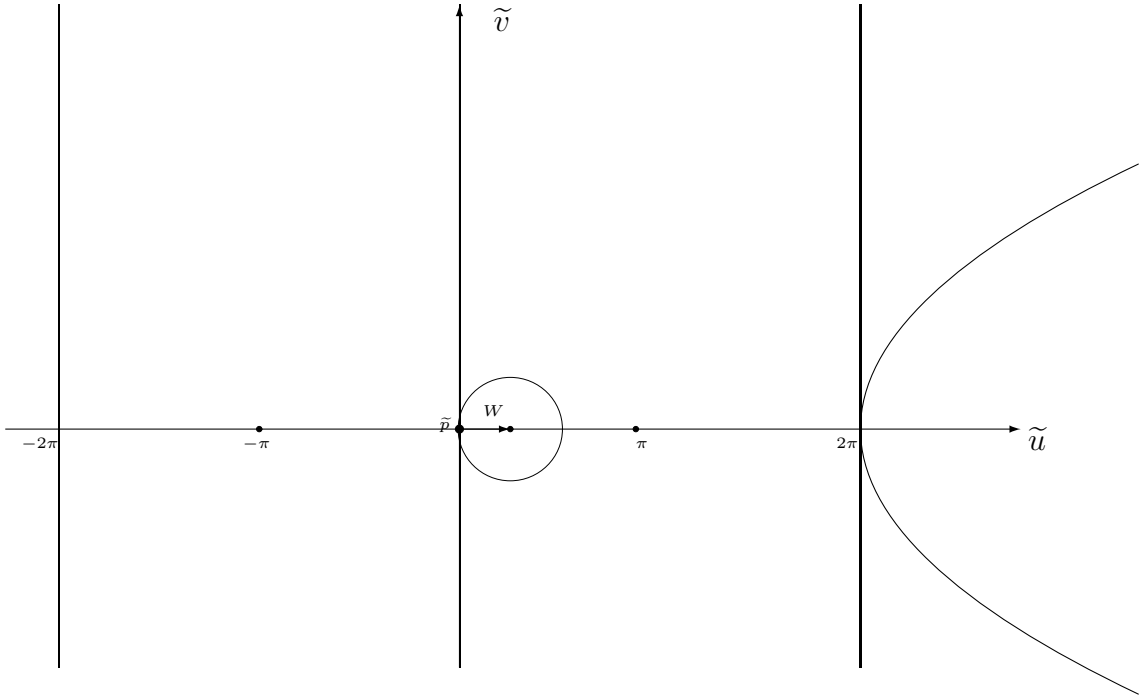%


\begin{remark}
	Let us recall that meridians {passing through $\widetilde{p}$} are rays for the canonical Riemannian metric $h$, that is they contain no $h$-cut points. It follows that the corresponding $F$-geodesics, obtained by twisting the meridians by means of the flow of $W=\frac{\partial}{\partial u}$ are $F$-rays, that is they cannot contain any $F$-cut points. In particular, these are the $F$-geodesics \begin{eqnarray*}
		\widetilde{\mathcal{P}}(s)=\big(s, \pm s \big), s>0,
	\end{eqnarray*}
that is { they are parallel to } the asymptotes 
{of}
 the hyperbola $\widetilde{u}=\pi+\sqrt{\pi^2+\widetilde{v}^2}$.
	
\end{remark}

  \subsubsection{The flat torus}

   { \bf (a) An $\Sph^1$-action on the flat torus}

In this section, we consider a flat torus
\begin{eqnarray*}
\Sph^1 \times \Sph^1=\{(z^1, z^2) | z^1=e^{\theta i}, z^2=e^{\eta i}, \theta, \eta \in \R\},
\end{eqnarray*}
with the flat canonical metric $h=(h_{ij})$  given by
\begin{eqnarray*}
   h\bigg( (z^1,z^2), (w^1, w^2)\bigg)=z^1\overline{w}^1+z^2\overline{w}^2,
\end{eqnarray*}
that is,
\begin{eqnarray*}
   ds^2=d\theta^2+d\eta^2,
\end{eqnarray*}
(for details see for instance \cite{Ca}).


We define an $\Sph^1$-action $\varphi_t$ by
\begin{eqnarray}\label{9.31}
     \varphi_t  :  \Sph^1 \times (\Sph^1 \times \Sph^1)  &\longrightarrow& \Sph^1 \times \Sph^1  \\
                           e^{ti} \times (z^1, z^2)   &\longmapsto&  (e^{\frac{1}{\sqrt{2}}ti}z^1,  e^{\frac{1}{\sqrt{2}}ti}z^2). \nonumber
\end{eqnarray}
Putting
\begin{eqnarray*}
   W_{(z^1, z^2)}:=\bigg(  \frac{d \varphi_t (z^1, z^2)}{dt}  \bigg)\bigg| _{t=0},
\end{eqnarray*}
we have
\begin{eqnarray}\label{9.32}
   W_{(z^1, z^2)}:=(\frac{1}{\sqrt{2}}iz^1, \frac{1}{\sqrt{2}}iz^2).
\end{eqnarray}

Since $\varphi_t$ is an isometry and $ |W_{(z^1, z^2)}|_h=1$ it means that $W$ is a unit Killing vector field, and hence we can consider
the Kropina manifold $(\Sph^1 \times \Sph^1, F)$
 constructed by the navigation data $(h, W)$.

{\bf (b) The geodesics on the flat torus $(\Sph^1 \times \Sph^1, h)$}

Since the torus is flat, the differential equations of the $h$-geodesics are
\begin{equation*}
\begin{cases}
\frac{d^2\theta}{dt^2}=0,\\
\frac{d^2\eta}{dt^2}=0.
\end{cases}
\end{equation*}
 
On the flat torus, the $h$-geodesic $\rho(t)=(e^{\theta(t)i}, e^{\eta(t)i})$ emanating from a point $\rho(0)=(e^{\theta(0)i}, e^{\eta(0)i})=(e^{b^1i}, e^{b^2i})$ with the tangent vector $\dot{\rho}(0)= (\frac{d\theta}{dt}(0)e^{b^1i},  \frac{d\eta}{dt}(0)e^{b^2i}) =(a^1e^{b^1i}, a^2e^{b^1i})$ is given by
\begin{equation*}
\begin{cases}
 \theta(t)=a^1t+b^1,\\
 \eta(t)=a^2t+b^2,
\end{cases}
\end{equation*}
that is,
\begin{eqnarray}\label{9.33}
  \rho(t)=\big( e^{( a^1t+b^1)i}, e^{(a^2t+b^2)i} \big),
\end{eqnarray}
where $(a^1)^2+(a^2)^2=1$.

{\bf (c) The cut locus of a flat torus.}
We will consider the cut locus of a flat torus $M=(\Sph^1 \times \Sph^1, h)$ on the universal covering space $\widetilde{M}=\R\times \R$.
If we denote the coordinates of   $\widetilde{M}=\R\times \R$ by $(u, v)$, then 
the universal covering map $\pi : \widetilde{M} \longrightarrow M=(\Sph^1 \times \Sph^1, h)$ is defined by
\begin{eqnarray*}
\pi : (u, v) \longrightarrow (e^{ui}, e^{vi}).
\end{eqnarray*}
 
Let us consider the $h$-geodesic
\begin{eqnarray}\label{9.34}
  \widetilde{\rho}(t)=\big( a^1t, a^2t \big), \hspace{0.1in} (a^1)^2+(a^2)^2=1,
\end{eqnarray}
emanating from $\widetilde{p}=(0, 0)\in \widetilde{M}$, that is the straight line passing through the origin $\widetilde{p}$.

Let us consider the square $\mathcal{S}:=\{(u, v)\in \R\times \R : 0\leqq u \leqq 2\pi,  \hspace{0.1in} 0\leqq u \leqq 2\pi\}$.
Since the four points $\widetilde{p}    {=}  (0, 0)$, $(0,2\pi)$, $(2\pi, 0)$ and $(2\pi, 2\pi)$ are identified each other, the cut locus is the set
\begin{eqnarray}\label{h:cut locus flat torus}
   \mathcal C_{\widetilde{p}}:=\{(\pi, v) :   0\leqq u \leqq 2\pi\} \cup \{(u, \pi) :  0\leqq u \leqq 2\pi \}.
\end{eqnarray}

Hence, we have
\begin{proposition}
Let $(\Sph^1 \times \Sph^1, h)$ be a flat torus.
Fix $p=\varphi(\widetilde{p})\in \Sph^1 \times \Sph^1$.
The $h$-cut locus of $p$ is the set ${\mathcal C}_p=\pi(\mathcal C_{\widetilde{p}})$.
\end{proposition}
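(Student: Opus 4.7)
The plan is to exploit the universal covering $\pi:\widetilde M=\R^2\to M=\Sph^1\times\Sph^1$, together with the fact that the flat metric has zero sectional curvature. Since $(M,h)$ is flat, no $h$-geodesic on $M$ carries conjugate points (the Jacobi equation admits no non-trivial vanishing solution). By the standard Riemannian characterization of the cut locus (compare Remark \ref{rem: classical cut locus prop}), this forces $q\in\mathcal C_p$ if and only if there exist at least two distinct minimizing $h$-geodesic segments from $p$ to $q$. Hence the problem reduces to counting minimizing geodesics.

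Next I would lift the counting to the universal cover. Minimizing $h$-geodesics from $p=\pi(\widetilde p)=\pi(0,0)$ to $q=\pi(u,v)$ are in bijection with straight segments in $\R^2$ from $\widetilde p=(0,0)$ to the elements of the lattice-orbit $\pi^{-1}(q)=\{(u+2\pi m,v+2\pi n):(m,n)\in\mathbb{Z}^2\}$ that realize the minimum Euclidean distance $\min_{(m,n)}\sqrt{(u+2\pi m)^2+(v+2\pi n)^2}$. Therefore $q\in\mathcal C_p$ precisely when this nearest-lattice-point problem has a non-unique solution at $(u,v)$.

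It then remains to identify the non-unique locus explicitly. Choosing the fundamental representative $(u,v)\in[0,2\pi]\times[0,2\pi]$, only the four corners $(0,0)$, $(2\pi,0)$, $(0,2\pi)$, $(2\pi,2\pi)$ of the closed cell can compete to be nearest to $(u,v)$: any other lattice translate lies at Euclidean distance at least $3\pi$ from $(u,v)$, whereas at least one corner lies within the half-cell diameter $\pi\sqrt{2}<3\pi$. The equidistance conditions between two competing corners reduce, after elementary algebra, to $u=\pi$ or $v=\pi$ (with triple or quadruple equality arising at their intersection). Together these cut out exactly the set $\mathcal C_{\widetilde p}=\{(\pi,v):0\leqq v\leqq 2\pi\}\cup\{(u,\pi):0\leqq u\leqq 2\pi\}$, and projecting by $\pi$ yields the desired equality $\mathcal C_p=\pi(\mathcal C_{\widetilde p})$.

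The only delicate point is the Voronoi-type estimate ensuring that distant lattice translates never compete with the four neighbouring corners; this is a one-line comparison between the half-cell diameter $\pi\sqrt{2}$ and $3\pi$, and so is not a real obstacle. Everything else is a direct consequence of flatness plus the covering-space description of geodesics.
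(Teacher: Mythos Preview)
Your argument is correct and is precisely the computation the paper leaves implicit: the paper simply asserts, after noting that the four vertices of the fundamental square $[0,2\pi]^2$ are identified, that $\mathcal C_{\widetilde p}$ is the cross $\{u=\pi\}\cup\{v=\pi\}$, without spelling out the no-conjugate-points reduction or the Voronoi reasoning you supply. One small numerical slip: the distance from a point of $[0,2\pi]^2$ to a lattice point other than the four corners is bounded below by $2\pi$, not $3\pi$ (for instance $(2\pi,0)$ lies at distance exactly $2\pi$ from $(4\pi,0)$); since $2\pi>\pi\sqrt{2}$ this does not affect the conclusion.
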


\begin{remark}
	It is clear from topological reasons that the cut locus of a point $p$ on the flat torus $\Sph^1 \times \Sph^1$ must contain cycles. Intuitively, by identifying the opposite edges of the square, the cut locus closes and the cycles naturally come out.	
\end{remark}

\begin{figure}[H]
	\begin{picture}(200,150)
	\put(30, 30){\vector(1,0){150}}
	\put(50,10){\vector(0,1){150}}
	\put(40, 150){$\widetilde{v}$}
	\put(180, 22){$\widetilde{u}$}
	\put(100,10){\line(0,1){150}}
	\put(150,10){\line(0,1){150}}
       \put(30,80){\line(1,0){150}}
        \put(30,130){\line(1,0){150}}
	\put(40,23){{\tiny $\widetilde{p}$}}
       \put(50, 30){\circle*{2}}  
	\put(95,23){{\tiny$\pi$}}
	\put(100,30){\circle*{2}}
	\put(140,23){{\tiny$2\pi$}}
      \put(150,30){\circle*{2}}
      	\put(40,83){{\tiny$\pi$}}
	\put(50,80){\circle*{2}}
	\put(40,133){{\tiny$2\pi$}}
      \put(50,130){\circle*{2}}
       \put(105,85){$\mathcal{C}_{\widetilde{p}}$}
      \linethickness{1.5pt}
       \put(100,30){\line(0,1){100}}
         \put(50,80){\line(1,0){100}}
    \thinlines

   \put(320, 80){\ellipse{200}{100}}

  \put(120,20){\qbezier(150, 67) (153, 55),(200,55)}
  \put(120,20){\qbezier(200, 55) (247, 55),(250,67)}
 \put(120,20){\qbezier(155, 63) (158, 75),(200,75)}
 \put(120,20){\qbezier(200, 75) (242, 75),(245,63)}
   \put(120, 20){\qbezier(235, 60) (270,50), (250, 17)}
   
  \put(377, 60){$\circle*{2}$}
   \put(365,60){{\tiny$p$} } 
  \linethickness{1.5pt}

\put(120,20){\qbezier(150, 67) (155, 80),(200,80)}
\put(120,20){\qbezier(200, 80) (245, 80),(250,67)}

  \put(120,20){\qbezier[15](150, 67) (153, 50),(200,48)}
 \put(120,20){\qbezier[15](200, 48) (247, 50),(250,67)}

 \put(120,20){\qbezier(145, 101) (175, 110),(177,80)}
 \put(120,20){\qbezier(170, 73) (175, 73),(177,80)}

 \put(120,20){\qbezier[10](145, 101) (120, 90),(140,70)}
 \put(120,20){\qbezier[7](140, 70) (150, 60),(170,73)}
  \thinlines

  \put(300, 110){$\mathcal{C}_{p}$}

\put(190, 78){$\overset{\pi}{\longrightarrow}$}

	\end{picture}	
        \caption{}
\end{figure}%

{\bf (d) The geodesics of a strong Kropina  space on a flat torus} 

We will consider $F$-geodesics of a strong Kropina space  $(\Sph^1 \times \Sph^1, F)$ on a flat torus with navigation data 
$(h, W)$.
From (\ref{9.31}) and (\ref{9.33}), it follows that $F$-geodesics emanating from $(e^{b^1i}, e^{b^2i})$ are given by the form
\begin{eqnarray}\label{9.36}
\varphi_t(\rho(t))&=&(e^{\frac{1}{\sqrt{2}}ti} e^{ (a^1t+b^1)i},  e^{\frac{1}{\sqrt{2}}ti}e^{(a^2t+b^2)i})\\
                           &=&(e^{\{(\frac{1}{\sqrt{2}}+ a^1)t+b^1\}i},  e^{\{(\frac{1}{\sqrt{2}}+a^2)t+b^2\}i}) \nonumber
\end{eqnarray}
where $(a^1, a^2)\ne (-\frac{1}{\sqrt{2}},    -\frac{1}{\sqrt{2}})$.
This condition is obtained from 
\begin{eqnarray*}
\dot{\rho}(0)\ne -W_{(e^{b^1}, e^{b^2})},
\end{eqnarray*}
that is
\begin{eqnarray*}
(a^1ie^{b^1i}, a^2ie^{b^2i})\ne -(\frac{1}{\sqrt{2}}ie^{b^1i}, \frac{1}{\sqrt{2}}ie^{b^2i}).
\end{eqnarray*}

{\bf (e) The geodesic connected domain of a strong Kropina  space $(\Sph^1 \times \Sph^1, F)$}

On the universal covering space  $\widetilde{M}=\R\times \R$, let us define a square
\begin{eqnarray}
S_{(n, m)}:=\{(u, v)\in \widetilde{M} : 2n\pi\leqq u < 2(n+1)\pi, \hspace{0.1in}  2m\pi \leqq v <2(m+1)\pi \}
\end{eqnarray}
and denote a point $(2n\pi, 2m\pi)$ by $P_{(n,m)}$.

On each square $S_{(n, m)}$, we identify the oriented segment $\overrightarrow{P_{(n,m)}P_{(n+1, m)}}$ with  $\overrightarrow{P_{(n,m+1)}P_{(n+1, m+1)}}$ and 
$\overrightarrow{P_{(n,m)}P_{(n, m+1)}}$ with  $\overrightarrow{P_{(n+1,m)}P_{(n+1, m+1)}}$, that is we obtain a flat torus $\Sph^1 \times \Sph^1$.

On  $\widetilde{M} $, the $\Sph^1$-action $\varphi$ operates as
\begin{eqnarray}\label{9.38}
      \widetilde{\varphi} : e^t\times (u, v) \longmapsto (\frac{1}{\sqrt{2}}t+u, \frac{1}{\sqrt{2}}t+v)
\end{eqnarray}
for any $(u, v)\in \widetilde{M}$, hence the Killing vector field $\widetilde{W}$ is given by 
\begin{eqnarray}\label{9.39}
\widetilde{W}=(\frac{1}{\sqrt{2}}, \frac{1}{\sqrt{2}}).
\end{eqnarray}

It is not difficult to see that the projection of $\widetilde{W}$ into $\Sph^1\times\Sph^1$ is a quasi-regular unit Killing field. 

It follows (see Theorem \ref{thm: quasi-regular complete})
\begin{proposition}
	Let $(\widetilde{M}, F)$ be a strong Kropina space with navigation data $(h, \widetilde{W})$.
	For any point $\widetilde{q}   {=}  (u,v)\in \widetilde{M}$, where   $u+v>0$, there exists an $F$-geodesic emanating from $\widetilde{p}   {=}  (0, 0)$ to $\widetilde{q}$.
\end{proposition}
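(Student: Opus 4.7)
The plan is to reduce this statement to the Euclidean case already settled in Section 6.1 (and equivalently Theorem \ref{thm: quasi-regular W for locally euclidean}(ii)), since the universal cover $(\widetilde{M}, \widetilde{h})$ is isometric to the Euclidean plane $(\R^2, \delta)$, and by \eqref{9.39} the lifted Killing field $\widetilde{W} = (1/\sqrt{2}, 1/\sqrt{2})$ is a constant (hence parallel, hence Killing) unit vector field on $\widetilde M$. So the strong Kropina structure on $\widetilde M$ coincides with the Euclidean strong Kropina structure of Subsection 6.1 with the constant wind $\widetilde W$.

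Given this identification, the criterion from Subsection 6.1 gives $\mathcal D_{\widetilde p}^+ = \{\widetilde x \in \widetilde M : \langle \widetilde x - \widetilde p, \widetilde W\rangle > 0\}$, and a direct evaluation
\[
\langle \widetilde q - \widetilde p, \widetilde W\rangle = \tfrac{1}{\sqrt 2}(u+v)
\]
shows that $u+v>0$ is exactly the condition $\widetilde q \in \mathcal D_{\widetilde p}^+$, so an $F$-geodesic from $\widetilde p$ to $\widetilde q$ exists.

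To make this concrete (and to exhibit the geodesic), I would combine Theorem \ref{Theorem 3.9} with the explicit formula \eqref{9.36} lifted to $\widetilde M$: a candidate $F$-geodesic has the form $\widetilde{\mathcal P}(t) = \widetilde\varphi_t(\widetilde\rho(t)) = \bigl((a^1+\tfrac{1}{\sqrt 2})t,\, (a^2+\tfrac{1}{\sqrt 2})t\bigr)$ with $(a^1)^2+(a^2)^2=1$ and $(a^1,a^2)\ne(-\tfrac{1}{\sqrt 2},-\tfrac{1}{\sqrt 2})$. Requiring $\widetilde{\mathcal P}(s_0)=(u,v)$ leads to $a^is_0 = u^i - s_0/\sqrt 2$ (with the obvious notation $u^1=u$, $u^2=v$), and squaring and adding yields the single equation
\[
s_0^2 = \bigl(u - \tfrac{s_0}{\sqrt 2}\bigr)^2 + \bigl(v - \tfrac{s_0}{\sqrt 2}\bigr)^2,
\]
which simplifies to $s_0 = \dfrac{u^2+v^2}{\sqrt 2\,(u+v)}$. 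The hypothesis $u+v>0$ makes this positive, and the non-degeneracy $(a^1,a^2)\ne(-\tfrac{1}{\sqrt 2},-\tfrac{1}{\sqrt 2})$ fails only when $u=v=0$, which is excluded.

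Alternatively, one could invoke Proposition \ref{prop: fix point characterization}: the map $\delta(\tau) = d^{\widetilde h}(\widetilde p, \widetilde\varphi_{-\tau}\widetilde q) = \sqrt{(u-\tau/\sqrt 2)^2 + (v-\tau/\sqrt 2)^2}$ is continuous on $[0,\infty)$, and the equation $\delta(\tau)=\tau$ reduces to the same quadratic, whose positive root requires $u+v>0$. There is really no hard step here; the only point requiring care is to check that the reduction to the Euclidean model is faithful, i.e., that the flow \eqref{9.38} on $\widetilde M$ is exactly the flow of the constant field $\widetilde W$ in \eqref{9.39} (immediate from differentiating \eqref{9.38}), and that the $F$-admissibility condition $\dot{\widetilde\rho}(0)\ne-\widetilde W$ at the initial point is preserved by our choice of $s_0$.
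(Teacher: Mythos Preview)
Your proof is correct and matches the paper's own treatment: the explicit computation you carry out (solving for $s_0$ and obtaining $s_0=\dfrac{u^2+v^2}{\sqrt{2}(u+v)}$) is exactly what the paper does in the lines immediately following the proposition. The only difference is in the cited prior result: the paper invokes Theorem~\ref{thm: quasi-regular complete} (quasi-regular case), which strictly speaking justifies the torus consequence rather than the statement on $\widetilde{M}$ itself, whereas your reduction to the Euclidean model of Section~\ref{subsec: Euclidean space} (equivalently Theorem~\ref{thm: quasi-regular W for locally euclidean}(ii)) is the more accurate reference for the $\widetilde{M}$ statement, since $\widetilde{W}$ is not quasi-regular on $\R^2$.
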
	

	In other words, a strong Kropina space $(\Sph^1 \times \Sph^1, F)$ on a flat torus is geodesically connected. 

Without loss of generality, we may consider the geodesics emanating from $\widetilde{p}   {=}  (0, 0)$.
From  $(b^1, b^2)=(0, 0)$ and  (\ref{9.36}), they are can be written as
\begin{eqnarray}\label{9.40}
\widetilde{\mathcal P}(t)=((\frac{1}{\sqrt{2}}+a^1)t, (\frac{1}{\sqrt{2}}+a^2)t),
\end{eqnarray}
where $(a^1)^2+(a^2)^2=1$ and $(a^1, a^2)\ne (-\frac{1}{\sqrt{2}},    -\frac{1}{\sqrt{2}})$.

For any point $\widetilde{q}   {=}  (u, v)\in \widetilde{M}$,   $u+v>0$, we will consider the $F$-geodesics emanating from $\widetilde{p}   {=}  (0, 0)$ and passing through $\widetilde{q}$.
Then from (\ref{9.40}) we get
\begin{eqnarray*}
(\frac{1}{\sqrt{2}}+a^1)t&=&u, \\
(\frac{1}{\sqrt{2}}+a^2)t&=&v,
\end{eqnarray*}
where $(a^1)^2+(a^2)^2=1$ and $(a^1, a^2)\ne (-\frac{1}{\sqrt{2}},    -\frac{1}{\sqrt{2}})$.
From the above two equations, we have
\begin{eqnarray}
a^1t&=&u-\frac{1}{\sqrt{2}}t,\label{9.41}\\
a^2t&=&v-\frac{1}{\sqrt{2}} t. \label{9.42}
\end{eqnarray}
Substituting the above two equations to $(a^1t)^2+(a^2t)^2=t^2$, we have
\begin{eqnarray*}
  (u-\frac{1}{\sqrt{2}}t)^2+(v-\frac{1}{\sqrt{2}} t)^2=t^2.
\end{eqnarray*}
The above equation can be changed as follows:
\begin{eqnarray*}
u^2+v^2-\frac{2}{\sqrt{2}}(u+v)t=0.
 \end{eqnarray*}
Since $u+v>0$,  we get
\begin{eqnarray}\label{distance on the flat torus}
t=\frac{u^2+v^2}{\sqrt{2}(u+v)}.
\end{eqnarray}
Moreover, observe that from \eqref{9.41} , \eqref{9.42} and \eqref{distance on the flat torus} we get
\begin{eqnarray}
 a^1&=&\frac{u}{t}-\frac{1}{\sqrt{2}}=\frac{\sqrt{2}u(u+v)} {u^2+v^2}-\frac{1}{\sqrt{2}},\\
a^2&=&\frac{v}{t}-\frac{1}{\sqrt{2}}=\frac{\sqrt{2}v(u+v)} {u^2+v^2}-\frac{1}{\sqrt{2}},
\end{eqnarray}
 this is the initial direction for the $h$-geodesic that gives the $F$-geodesic joining the two points.

\begin{proposition}
	On the covering space $\widetilde{M}=\R\times \R$ of the flat torus $(\Sph^1\times \Sph^1,F)$, the $F$-length of a Kropina geodesic emanating from $\widetilde{p}   {=}  (0, 0)$ to a point 
	$\widetilde{q}   {=}  (u,v$)
	is given by
\begin{eqnarray*}
\frac{u^2+v^2}{\sqrt{2}(u+v)}.
\end{eqnarray*}
\end{proposition}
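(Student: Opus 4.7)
The plan is to observe that the computation needed has essentially already been carried out in the paragraphs preceding the statement, and that the conclusion follows by reading off the parameter value at which the $F$-geodesic hits $\widetilde{q}$, combined with the fact that the geodesic is $F$-unit speed.

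First I would invoke Theorem \ref{Theorem 3.9} (ii): since $\rho(s)=(a^1 s, a^2 s)$ with $(a^1)^2+(a^2)^2=1$ is $h$-unit speed and $\widetilde{\mathcal P}(s)=\widetilde{\varphi}_s(\rho(s))$, the Kropina curve $\widetilde{\mathcal P}$ given by \eqref{9.40} is $F$-unit speed, i.e.\ $F(\widetilde{\mathcal P}(s),\dot{\widetilde{\mathcal P}}(s))=1$ for all $s$ in its domain. Consequently, if $\widetilde{\mathcal P}(t)=\widetilde{q}$, then by definition of $F$-length
\[
\mathcal L_F(\widetilde{\mathcal P}|_{[0,t]})=\int_0^t F(\widetilde{\mathcal P}(s),\dot{\widetilde{\mathcal P}}(s))\,ds=\int_0^t ds = t.
\]

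Second, I would identify $t$ by solving $\widetilde{\mathcal P}(t)=\widetilde{q}$. This is precisely the computation done just above the statement: from \eqref{9.41}--\eqref{9.42} together with $(a^1)^2+(a^2)^2=1$, one obtains the single scalar equation $u^2+v^2-\sqrt{2}(u+v)t=0$, and since the hypothesis $u+v>0$ (equivalently, $\widetilde{q}\in \widetilde{\mathcal D}^+_{\widetilde{p}}$) makes the linear coefficient nonzero, the unique positive solution is
\[
t=\frac{u^2+v^2}{\sqrt{2}(u+v)}.
\]

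Combining the two steps gives $\mathcal L_F(\widetilde{\mathcal P}|_{[0,t]})=\frac{u^2+v^2}{\sqrt{2}(u+v)}$, which is the asserted formula. I do not anticipate a genuine obstacle here: the only thing one must check carefully is the admissibility condition $(a^1,a^2)\neq(-\tfrac{1}{\sqrt{2}},-\tfrac{1}{\sqrt{2}})$, but this is guaranteed by $u+v>0$ (if $a^1=a^2=-\tfrac{1}{\sqrt 2}$ then $\widetilde{\mathcal P}(t)\equiv 0$, contradicting $\widetilde{q}\neq \widetilde{p}$). Finally, since the formula depends only on $(u,v)$ and not on the choice of the direction $(a^1,a^2)$, the value $t$ coincides with the $F$-length of any such $F$-geodesic from $\widetilde{p}$ to $\widetilde{q}$, finishing the proof.
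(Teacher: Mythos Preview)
Your proposal is correct and follows exactly the paper's approach: the proposition is simply a restatement of the computation \eqref{9.41}--\eqref{distance on the flat torus} already carried out in the preceding paragraphs, and you have correctly made explicit the one point the paper leaves implicit, namely that the $F$-length equals the parameter value $t$ because $\widetilde{\mathcal P}$ is $F$-unit speed (via Theorem \ref{Theorem 3.9}).
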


We can now easily determine the cut locus of the strong Kropina metric $F$ on the flat torus. Indeed, taking into account that the $\widetilde{h}$-cut locus is given by \eqref{h:cut locus flat torus}, then we can apply Corollary \ref{cor: F-cut locus} for each segment.

Indeed, let $\hat{q}=(u_0,\pi)\in \{(u, \pi) :  0\leq u \leq 2\pi \}$ be a point in the $\widetilde{h}$-cut locus of $\widetilde{p}=(0,0)$. Then equation \eqref{distance on the flat torus} implies that the $F$-distance from $\widetilde{p}$ to $\hat{q}$ is {$l_0=\frac{{u_0}^2+\pi^2}{\sqrt{2}({u_0}+\pi)}$}, and hence the set of points  $\{(u, \pi) :  0\leq u \leq 2\pi \}$ is transformed under the flow action into 
\begin{equation}
\{\varphi_{(u,\pi)}(l_0) :  0\leq u \leq 2\pi
\}=\{(\frac{u^2+\pi^2}{2(u+\pi)}+u,\frac{u^2+\pi^2}{2(u+\pi)}+\pi) :  0\leq u \leq 2\pi
\}.
\end{equation}

Similarly, the $F$-distance from $\widetilde{p}$ to a point in the set $\{(\pi,v) |  0\leq v \leq 2\pi \}$
 is $l_0=\frac{\pi^2+v^2}{\sqrt{2}(\pi+v)}$, and hence the set of points $\{(\pi,v) |  0\leq v \leq 2\pi \}$
 is transformed under the flow action into 
\begin{equation}
\{\varphi_{(\pi,v)}(l_0) :  0\leq v \leq 2\pi
\}=\{(\frac{\pi^2+v^2}{2(\pi+v)}+\pi,\frac{\pi^2+v^2}{2(\pi+v)}+v) :  0\leq v \leq 2\pi
\}.
\end{equation}

\begin{proposition}\label{Prop 9.19}
On the covering space $\widetilde{M}=\R\times \R$ of a flat torus $(\Sph^1\times \Sph^1, h)$, the cut locus of the origin is the set of points 
\begin{equation}
\mathcal{C}^F_{\widetilde{p}}=\{(\frac{u^2+\pi^2}{2(u+\pi)}+u,\frac{u^2+\pi^2}{2(u+\pi)}+\pi) :  0\leq u \leq 2\pi
\}\cup
\{(\frac{\pi^2+v^2}{2(\pi+v)}+\pi,\frac{\pi^2+v^2}{2(\pi+v)}+v) :  0\leq v \leq 2\pi
\}.
\end{equation}

\end{proposition}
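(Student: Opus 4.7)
The plan is to apply Proposition~\ref{cor: F-cut locus}: since the $F$-cut structure on a strong Kropina space is obtained by transporting the $h$-cut structure via the Killing flow $\widetilde{\varphi}$, it suffices to identify each piece of the lifted $h$-cut locus, determine the flow parameter at which the associated $F$-geodesic reaches its cut point, and collect the images.

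I would first invoke the preceding Proposition, which identifies the lift to $\widetilde{M}=\R\times\R$ of the $h$-cut locus of $\widetilde{p}=(0,0)$ on the flat torus as the cross
\[
\mathcal{C}_{\widetilde{p}}=\{(u,\pi):0\le u\le 2\pi\}\cup\{(\pi,v):0\le v\le 2\pi\}.
\]
For a generic point $\hat{q}=(u,\pi)$ on the horizontal segment, I would use the $F$-distance formula \eqref{distance on the flat torus} to read off the relevant flow parameter $l_0=\frac{u^2+\pi^2}{\sqrt{2}(u+\pi)}$, and then apply the flow $\widetilde{\varphi}_{l_0}(x,y)=(x+l_0/\sqrt{2},\,y+l_0/\sqrt{2})$ generated by $\widetilde{W}=(1/\sqrt{2},1/\sqrt{2})$ to obtain the first family that appears in the statement. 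An entirely symmetric computation for $\hat{q}=(\pi,v)$ would produce the second family.

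Taking the union of the two images then yields $\mathcal{C}_{\widetilde{p}}^F$ as claimed. The main technical hurdle is not conceptual but lies in the algebraic bookkeeping: one must identify the common parameter that relates the $h$-geodesic $\rho$ to the associated $F$-geodesic $\mathcal{P}=\widetilde{\varphi}_t\circ\rho$, and then verify that applying the flow at that value sends the lifted $h$-cut locus precisely to the stated coordinates, a simplification which hinges on the identity $l_0/\sqrt{2}=\frac{u^2+\pi^2}{2(u+\pi)}$ together with its counterpart for the vertical segment.
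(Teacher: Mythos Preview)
Your proposal is correct and follows essentially the same route as the paper: invoke Corollary~\ref{cor: F-cut locus}, take the lifted $h$-cut locus \eqref{h:cut locus flat torus}, use formula \eqref{distance on the flat torus} to obtain the flow parameter $l_0$ for each point $\hat q$ on the two segments, apply $\widetilde{\varphi}_{l_0}$, and take the union. The algebraic simplification $l_0/\sqrt{2}=\frac{u^2+\pi^2}{2(u+\pi)}$ you single out is exactly the one the paper uses implicitly when writing the transformed coordinates.
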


Rewriting Proposition \ref{Prop 9.19}, we have
\begin{theorem}
Let $M=(\Sph^1 \times \Sph^1, h)$ be a flat torus.
And let $\widetilde{M}=\R\times \R$ and  $\pi :\widetilde{M} \longrightarrow M$ be a covering space of $M$ and the covering map, respectively.
Denote the vector field generated by  an $\Sph^1$-action $\varphi_t$
\begin{eqnarray*}
     \varphi_t  :  \Sph^1 \times (\Sph^1 \times \Sph^1)  &\longrightarrow& \Sph^1 \times \Sph^1  \\
                           e^{ti} \times (z^1, z^2)   &\longmapsto&  (e^{\frac{1}{\sqrt{2}}ti}z^1,  e^{\frac{1}{\sqrt{2}}ti}z^2) \nonumber
\end{eqnarray*}
by $W$, and the Kropina metric  constructed by navigation data $(h, W)$ by $F$.

Then the $F$-geodesic $\mathcal P(t)$ emanating from $p=\pi(\widetilde{p})$ of the Kropina space $(M, F)$, {where $\widetilde{p}=(0, 0)$,} is expressed by
\begin{eqnarray*}
\mathcal P(t) &=&(e^{(\frac{1}{\sqrt{2}}+ a^1)i},  e^{(\frac{1}{\sqrt{2}}+a^2)ti}) \nonumber
\end{eqnarray*}
where $(a^1, a^2)\ne (-\frac{1}{\sqrt{2}},    -\frac{1}{\sqrt{2}})$.

The $F$-cut locus of $p$ is the set $\pi(\mathcal{C}^F_{\widetilde{p}})$.
\end{theorem}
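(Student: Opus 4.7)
The plan is to assemble the theorem from two independent threads already developed in this section: the explicit form of $F$-geodesics, and the relation between the $h$-cut locus and the $F$-cut locus under the flow of $W$.

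First I would pass to the universal covering $\widetilde M = \R\times \R$ and recall from Theorem \ref{Theorem 3.9} that every $F$-unit speed geodesic $\widetilde{\mathcal P}$ of $(\widetilde M,\widetilde F)$ with $\widetilde{\mathcal P}(0)=\widetilde p=(0,0)$ has the form $\widetilde{\mathcal P}(t)=\widetilde\varphi_t(\widetilde\rho(t))$, where $\widetilde\rho$ is an $h$-unit speed geodesic with the same initial point and $\dot{\widetilde\rho}(0)\neq -\widetilde W_{\widetilde p}$. Because $(\widetilde M,h)$ is flat, $\widetilde\rho(t)=(a^1t,a^2t)$ with $(a^1)^2+(a^2)^2=1$; together with the flow formula \eqref{9.38} and the Killing field $\widetilde W=(\tfrac{1}{\sqrt 2},\tfrac{1}{\sqrt 2})$ from \eqref{9.39}, this yields the expression \eqref{9.40}, and the non-degeneracy condition $\dot{\widetilde\rho}(0)\neq -\widetilde W_{\widetilde p}$ translates into $(a^1,a^2)\neq(-\tfrac{1}{\sqrt 2},-\tfrac{1}{\sqrt 2})$. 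Projecting by $\pi$ gives the formula for $\mathcal P(t)$ displayed in the theorem.

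Next I would treat the cut locus using Proposition \ref{cor: F-cut locus}, which identifies $F$-cut points along $\mathcal P$ with $h$-cut points along the associated $\widetilde\rho$: $\widetilde q=\widetilde\varphi_{l}(\widehat q)$ is in $\mathcal C^F_{\widetilde p}$ if and only if $\widehat q\in \mathcal C^h_{\widetilde p}$ and $l=d_h(\widetilde p,\widehat q)$. The $h$-cut locus on $\widetilde M$ was already computed to be \eqref{h:cut locus flat torus}, i.e.\ the union of the two segments $\{(\pi,v):0\le v\le 2\pi\}$ and $\{(u,\pi):0\le u\le 2\pi\}$. For each point of these segments, one has $l=d_h(\widetilde p,\widehat q)$, which is the $F$-separation computed in \eqref{distance on the flat torus}, evaluated at $(u,\pi)$ or $(\pi,v)$. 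Applying $\widetilde\varphi_l$ to each cut point and pushing down by $\pi$ gives the set $\pi(\mathcal C^F_{\widetilde p})$ as described in Proposition \ref{Prop 9.19}, which is exactly the stated $F$-cut locus.

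Finally I would note that because $\widetilde W$ is quasi-regular and $h$ is complete, Theorem \ref{thm: quasi-regular complete} applies, so there is no ambiguity in the definition of the cut parameter $l$: the $F$-geodesic $\mathcal P$ minimizes up to its cut time and $d_F(\widetilde p,\widehat q)$ in the statement is realized by the $h$-length of $\widetilde\rho$ from $\widetilde p$ to $\widehat q$. The only delicate step is verifying that the image under $\widetilde\varphi_l$ of the two $h$-cut segments assembles correctly after the projection $\pi$; this is what Proposition \ref{Prop 9.19} records, so the theorem is obtained by combining the geodesic formula of the first paragraph with the set-theoretic identity of the second. The main obstacle is really only bookkeeping: making sure the parametrization of $\widetilde\rho$ used to obtain $l=d_h$ agrees with the $F$-unit speed parametrization of $\mathcal P$, which is guaranteed by Remark \ref{rem: equiv}, so the compatibility is automatic.
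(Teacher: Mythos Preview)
Your proposal is correct and matches the paper's approach: the theorem is presented in the paper as a summary (``Rewriting Proposition \ref{Prop 9.19}'') of the preceding computations, and you correctly identify the two ingredients---the geodesic formula via Theorem \ref{Theorem 3.9} and \eqref{9.40}, and the cut locus via Proposition \ref{cor: F-cut locus} and Proposition \ref{Prop 9.19}---and how they combine under the covering map $\pi$.

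One small slip: you write that $l=d_h(\widetilde p,\widehat q)$ ``is the $F$-separation computed in \eqref{distance on the flat torus}'', but \eqref{distance on the flat torus} is the $F$-distance from $\widetilde p$ to the \emph{endpoint} $(u,v)$, not to $\widehat q$; the quantity $l$ you need is simply the $h$-length $\sqrt{u^2+\pi^2}$ (resp.\ $\sqrt{\pi^2+v^2}$) of $\widetilde\rho$ up to $\widehat q$, which by Remark \ref{rem: equiv} coincides with the $F$-length of $\mathcal P$ up to the corresponding $F$-cut point. The paper itself uses the formula from \eqref{distance on the flat torus} for $l_0$ in the computation preceding Proposition \ref{Prop 9.19}, so your reference is consistent with the paper's own bookkeeping, but be aware that the identification works because the common parameter $t$ serves simultaneously as $h$-arclength on $\widetilde\rho$ and $F$-arclength on $\widetilde{\mathcal P}$.
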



\section{Appendix. }
\subsection{ Conditions for a  Kropina metric  $F=\frac{\alpha^2}{\beta}$ to be projectively equivalent to the Riemannian metric $\alpha$ }
{
Let $(M, F=\alpha^2/\beta : A \longrightarrow \R^+)$ be a Kropina space, where $\alpha=\sqrt{a_{ij}(x)y^iy^j}$ and $\beta=b_i(x)y^i$.
We denote by the symbol $(;)$ the covariant derivative with respect to Levi-Civita connection on the Riemannian space $(M, \alpha)$.
The following notations are customary:
\begin{eqnarray*}
r_{ij}:=\frac{b_{i;j}+b_{j;i}}{2}, \hspace{0.1in} s_{ij}:=\frac{b_{i;j}-b_{j;i}}{2},\\
{s^i}_j:=a^{ik}s_{kj}, \hspace{0.1in} s_j=b^is_{ij}, \\
r_{00}:=r_{ij}y^iy^j, \hspace{0.1in}    {s^i}_0=   {s^i}_jy^j,   \hspace{0.1in}            s_0:=s_iy^i.
\end{eqnarray*}
}

The geodesic spray coefficients $\overline{G}^i$ of a Kropina space {  $(M, F=\alpha^2/\beta : A \longrightarrow \R^+)$}
 are expressed by 
\begin{eqnarray*}
2\overline{G}^i={{\gamma_0}^i}_0+2B^i,
\end{eqnarray*}
where 
\begin{eqnarray*}
B^i=-\frac{\beta r_{00}+\alpha^2 s_0}{b^2\alpha^2}y^i-\frac{\alpha^2 {s^i}_0}{2\beta}+\frac{\beta r_{00}+\alpha^2 s_0}{2b^2\beta}b^i
\end{eqnarray*}
and ${{\gamma_j}^i}_k$ are the coefficients of Levi-Civita connection with respect to $\alpha$.

We will get the conditions for the Kropina metric $F$ to be projectively equivalent to the Riemannian metric $\alpha$.
 Suppose that the Kropina metric is projectively equivalent to Riemannian metric $\alpha$.
Then there exists a function $P$ on $TM$, which is positively homogeneous of degree one with respect to $y$, such that
\begin{eqnarray}\label{10.4}
-\frac{\beta r_{00}+\alpha^2 s_0}{b^2\alpha^2}y^i-\frac{\alpha^2 {s^i}_0}{2\beta}+\frac{\beta r_{00}+\alpha^2 s_0}{2b^2\beta}b^i=Py^i,
\end{eqnarray}
(see \cite{BCS} and \cite{M1}).

Transvecting (\ref{10.4}) by $y_i$, we get
\begin{eqnarray*}
-\frac{\beta r_{00}+\alpha^2 s_0}{b^2}+\frac{\beta r_{00}+\alpha^2 s_0}{2b^2}=P\alpha^2,
\end{eqnarray*}
that is,
\begin{eqnarray}\label{10.5}
P=-\frac{\beta r_{00}+\alpha^2 s_0}{2b^2\alpha^2}.
\end{eqnarray}

Transvecting (\ref{10.4}) by $b_i$, we get
\begin{eqnarray*}
-\frac{\beta r_{00}+\alpha^2 s_0}{b^2\alpha^2}\beta-\frac{\alpha^2 s_0}{2\beta}+\frac{\beta r_{00}+\alpha^2 s_0}{2\beta}=P\beta,
\end{eqnarray*}
and substituting (\ref{10.5}) in the last equation, we obtain
\begin{eqnarray*}
-\frac{\beta r_{00}+\alpha^2 s_0}{b^2\alpha^2}\beta-\frac{\alpha^2 s_0}{2\beta}+\frac{\beta r_{00}+\alpha^2 s_0}{2\beta}=-\frac{\beta r_{00}+\alpha^2 s_0}{2b^2\alpha^2}\beta,
\end{eqnarray*}
that is,
\begin{eqnarray}\label{10.6}
\alpha^2(b^2 r_{00}-  s_0\beta) =\beta^2 r_{00}.
\end{eqnarray}
Since $\beta^2$ is not divisible by $\alpha^2$, it follows that $r_{00}$ must be divisible by $\alpha^2$, that is, there exists a function $c(x)$ of $x$ alone such that
\begin{eqnarray}\label{10.7}
  r_{00}=c(x)\alpha^2.
\end{eqnarray}
Substituting (\ref{10.7}) to (\ref{10.6}), we have
\begin{eqnarray}\label{10.8}
c(x)b^2\alpha^2-  s_0\beta =c(x)\beta^2 .
\end{eqnarray}
Since $c(x)b^2\alpha^2$ must be divisible by $\beta$ and  $\alpha^2$ is not divisible by $\beta$, it follows that $c(x)=0$.
Substituting $c(x)=0$ to (\ref{10.7}) and (\ref{10.8}), we have
\begin{eqnarray}\label{10.9}
   r_{ij}=0, \hspace{0.1in}  s_i=0.
\end{eqnarray}
Then from (\ref{10.5}) it follows $P=0$.
Lastly, substituting $P=0$ and (\ref{10.9}) to  (\ref{10.4}), we get ${s^i}_0=0$, that is,
\begin{eqnarray}\label{10.10}
s_{ij}=0.
\end{eqnarray}

From (\ref{10.9}) and (\ref{10.10}), we get
\begin{eqnarray*}
b_{i;j}=0,
\end{eqnarray*}
that is, $b_i$ is parallel with respect to $\alpha$.

Conversely, suppose that $b_i$ is parallel with respect to $\alpha$, then we have $B^i=0$,
that is Kropina metric is projectively equivalent to Riemannian metric $\alpha$.
Furthermore, we have $G^i={{\gamma_0}^i}_0$.

Summarizing the above discussion, we obtain
\begin{proposition}
The necessary and sufficient condition for a Kropina metric $F=\frac{\alpha^2}{\beta}$ to be projectively equivalent to the Riemannian metric $\alpha$ is that the vector field $(b_i)$ is parallel with respect to $\alpha$.
\end{proposition}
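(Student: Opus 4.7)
My plan is to invoke the standard characterization of projectively related sprays: two sprays $G^i$ and $\widetilde{G}^i$ define the same unparametrized geodesics if and only if $G^i - \widetilde{G}^i = P\,y^i$ for some function $P$ on $TM$ which is positively homogeneous of degree one in $y$. For the Kropina metric $F=\alpha^2/\beta$, the spray coefficients decompose as $2\overline{G}^i = {{\gamma_0}^i}_0 + 2B^i$, with $B^i$ given by the explicit formula already recorded in the appendix in terms of $r_{ij}, s_{ij}, s_j, \alpha, \beta, b^i, y^i, b^2$. So projective equivalence to the Riemannian spray of $\alpha$ reduces to requiring $B^i = P\,y^i$, and the whole proposition is equivalent to showing that this identity is solvable in $P$ if and only if $b_{i;j}=0$.

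To extract $P$ and deduce the algebraic consequences, I would first transvect $B^i = P\,y^i$ with $y_i$; using the antisymmetry of $s_{ij}$ (so that ${s^i}_0\, y_i = 0$), this yields a closed formula for $P$ in terms of $r_{00}, s_0, \alpha^2, \beta, b^2$. Substituting this $P$ back and then transvecting with $b_i$ eliminates $P$ and produces a purely algebraic polynomial identity relating $\alpha^2$, $\beta$, $r_{00}$, and $s_0$. At each fixed $x\in M$, both sides are polynomials in $y$; since $\alpha^2$ is an irreducible quadratic form and $\beta$ a nonproportional linear form, $\alpha^2$ and $\beta$ are coprime in $\R[y^1,\dots,y^n]$. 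Unique factorization then forces $\alpha^2 \mid r_{00}$, so $r_{00}=c(x)\alpha^2$ for some scalar function $c$; a second application of the same divisibility argument to the remainder forces $c\equiv 0$ and $s_0\equiv 0$, i.e.\ $r_{ij}=0$ and $s_j=0$. Feeding the resulting $P=0$ back into $B^i=P\,y^i$ yields ${s^i}_0=0$, hence $s_{ij}=0$. Combining $r_{ij}=s_{ij}=0$ gives $b_{i;j}=0$.

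The converse direction is immediate: if $b_{i;j}=0$ then $r_{ij}, s_{ij}, s_j$ all vanish identically and every term in the explicit expression for $B^i$ disappears, so $2\overline{G}^i = {{\gamma_0}^i}_0$ and $F$ shares its spray with $\alpha$. The step I expect to be most delicate is the polynomial divisibility argument: one has to justify coprimality of $\alpha^2$ and $\beta$ in $\R[y]$ at every point $x$ (ruling out degenerate configurations of $b$) and then argue that an identity of polynomials in $y$ forces the corresponding tensor-valued coefficients to vanish as fields on $M$. This is classical bookkeeping in $(\alpha,\beta)$-geometry going back to Matsumoto, but tracking which factors must be absorbed by $\alpha^2$ and which by $\beta$ is the step most prone to arithmetic slip.
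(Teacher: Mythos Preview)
Your proposal is correct and follows essentially the same route as the paper: transvect $B^i=Py^i$ first with $y_i$ to extract $P$, then with $b_i$ to obtain the polynomial identity $\alpha^2(b^2 r_{00}-s_0\beta)=\beta^2 r_{00}$, and run the two-step divisibility argument to force $r_{ij}=0$, $s_i=0$, then $s_{ij}=0$. Your commentary on coprimality of $\alpha^2$ and $\beta$ in $\R[y]$ makes explicit what the paper leaves implicit, but the argument is otherwise identical.
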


\subsection{The  condition  $b_{i;j}=0$ in terms of the navigation   data $(h, W)$ }

By a straightforward computation we have
\begin{eqnarray}\label{10.11}
	r_{ij}=2e^{-\kappa} \bigg(\texttt{R}_{ij}-\frac{1}{2}W^r\kappa_rh_{ij}\bigg),\hspace{0.1in}
	s_{ij}=2e^{-\kappa} \bigg(\texttt{S}_{ij}+\frac{\kappa_iW_j-\kappa_jW_i}{2}\bigg),
\end{eqnarray}
where $W^r=h^{rj}W_j$, and $k_i=\frac{\partial k}{\partial x^i}$, and we  put
\begin{eqnarray*}
	\texttt{R}_{ij}:=\frac{W_{i|j}+W_{j|i}}{2}, \hspace{0.1in}
	\texttt{S}_{ij}:=\frac{W_{i|j}-W_{j|i}}{2},
\end{eqnarray*}
where the notation "$_{|}$" stands for the covariant derivative with respect to $h$.

Suppose that the equation  $b_{i;j}=0$ holds, then 
\begin{eqnarray}\label{10.12}
	r_{ij}=	s_{ij}=0.
\end{eqnarray}
Furthermore, from the equation  $b_{i;j}=0$ it follows that $b^2$ is constant, hence 
 the equation $b^2e^\kappa=4$ implies that  $\kappa$ is also constant.
Since $\kappa_r=0$, the equations (\ref{10.11}) reduce to
\begin{eqnarray*}
	\texttt{R}_{ij}=0,
	\texttt{S}_{ij}=0.
\end{eqnarray*}
Hence we get
\begin{eqnarray}\label{10.13}
	W_{i|j}=0.
\end{eqnarray}

Conversely, suppose that the equation  (\ref{10.13}) holds and that $\kappa$ is constant, then the equation (\ref{10.11}) reduces to (\ref{10.12}), and therefore
\begin{eqnarray*}
	b_{i;j}=0.
\end{eqnarray*}




\vspace{0in}
\begin{center}
            Sorin V. Sabau\\
     Department of Mathematics\\
     Tokai University\\
       Sapporo, 005\,--\,8601 Japan\\
        E-mail:    sorin@tokai.ac.jp
\end{center}

\begin{center}
	K. Shibuya\\
	Department of Mathematics\\
	Hiroshima University\\
	Higashi Hiroshima, 739 \,--\, 8521 Japan\\
	E-mail:  shibuya@hiroshima-u.ac.jp  
\end{center}

\begin{center}
      R. Yoshikawa   \\
      Hino  High School \\                 %
	150 Kouzukeda\\
     Hino-cho Gamou-gun 
      529-1642 Japan \\              %
	E-mail: ryozo@e-omi.ne.jp
  \end{center}

\end{document}